\theoremstyle{plain}
\newtheorem{theo}{Theorem}[section]
\newtheorem{lem}{Lemma}[section]
\newtheorem{prop}{Proposition}[section]
\newtheorem{cor}{Corollary}[section]
\theoremstyle{definition} 
\newtheorem{nota}{Notation}[section]
\newtheorem{de}{Definition}[section]
\newtheorem{exa}{Example}[section]
\newtheorem{as}{Assumption}[section]
\newtheorem{alg}{Algorithm}[section]
\newcommand{\btheo}{\begin{theo}}
\newcommand{\bde}{\begin{de}}
\newcommand{\ble}{\begin{lem}}
\newcommand{\bpr}{\begin{prop}}
\newcommand{\bno}{\begin{nota}}
\newcommand{\bex}{\begin{exa}}
\newcommand{\bcor}{\begin{cor}}
\newcommand{\spro}{\begin{proof}}
\newcommand{\bas}{\begin{as}}
\newcommand{\balg}{\begin{alg}}
\newcommand{\etheo}{\end{theo}}
\newcommand{\ede}{\end{de}}
\newcommand{\ele}{\end{lem}}
\newcommand{\epr}{\end{prop}}
\newcommand{\eno}{\end{nota}}
\newcommand{\eex}{\end{exa}}
\newcommand{\ecor}{\end{cor}}
\newcommand{\fpro}{\end{proof}}
\newcommand{\eas}{\end{as}}
\newcommand{\ealg}{\end{alg}}
\theoremstyle{plain}
\newtheorem{theos}{Theorem}
\newtheorem{props}{Proposition}
\newtheorem{lems}{Lemma}
\newtheorem{cors}{Corollary}
\theoremstyle{definition}
\newtheorem{exas}{Example}
\newtheorem{algs}{Algorithm}
\newtheorem{asss}{Assumption}
\newtheorem{defns}{Definition}
\newcommand{\btheos}{\begin{theos}}
\newcommand{\etheos}{\end{theos}}
\newcommand{\bprops}{\begin{props}}
\newcommand{\eprops}{\end{props}}
\newcommand{\bdes}{\begin{defns}}
\newcommand{\edes}{\end{defns}}
\newcommand{\blems}{\begin{lems}}
\newcommand{\elems}{\end{lems}}
\newcommand{\bcors}{\begin{cors}}
\newcommand{\ecors}{\end{cors}}
\newcommand{\bexs}{\begin{exas}}
\newcommand{\eexs}{\end{exas}}
\newcommand{\balgs}{\begin{algs}}
\newcommand{\ealgs}{\end{algs}}
\newcommand{\bass}{\begin{asss}}
\newcommand{\eass}{\end{asss}}
\newcommand{\numobs}{\ensuremath{n}}
\newcommand{\usedim}{\ensuremath{d}}
\newcommand{\mprob}{\ensuremath{\mathbb{P}}}
\newcommand{\ConeSet}{\ensuremath{\Kcone}}
\newcommand{\Width}{\ensuremath{\mathbb{W}}}
 \newcommand{\yvec}{\ensuremath{y}}
\newcommand{\real}{\ensuremath{\mathbb{R}}}
\newcommand{\defn}{\ensuremath{: \, =}}
\newcommand{\inprod}[2]{\ensuremath{\langle #1 , \, #2 \rangle}}
\newcommand{\Sphere}[1]{\SPHERE{#1}}
\newcommand{\Exs}{\ensuremath{\mathbb{E}}}
\newcommand{\NORMAL}{\ensuremath{N}}
\long\def\comment#1{}
\newcommand{\SPHERE}[1]{\ensuremath{S^{#1}}}
\newcommand{\ltwo}[1]{\ensuremath{\|#1\|_2}}
\newcommand{\LinSpace}{\ensuremath{L}}
\newcommand{\HACKPROOF}{\begin{proof}}
\newcommand{\HACKENDPROOF}{\end{proof}}
\newcommand{\var}{\ensuremath{\operatorname{var}}}
\newcommand{\cov}{\ensuremath{\operatorname{cov}}}
\newcommand{\Ball}{\ensuremath{\mathbb{B}}}
\newcommand{\qprob}{\ensuremath{\mathbb{Q}}}
\newcommand{\Ind}{\ensuremath{\mathbb{I}}}
\newcommand{\widgraph}[2]{\includegraphics[keepaspectratio,width=#1]{#2}}
\newlength{\widebarargwidth}
\newlength{\widebarargheight}
\newlength{\widebarargdepth}
\long\def\@makecaption#1#2{
        \vskip 0.8ex
        \setbox\@tempboxa\hbox{\small {\bf #1:} #2}
        \parindent 1.5em  %% How can we use the global value of this???
        \dimen0=\hsize
        \advance\dimen0 by -3em
        \ifdim \wd\@tempboxa >\dimen0
                \hbox to \hsize{
                        \parindent 0em
                        \hfil 
                        \parbox{\dimen0}{\def\baselinestretch{0.96}\small
                                {\bf #1.} #2
                                %%\unhbox\@tempboxa
                                } 
                        \hfil}
        \else \hbox to \hsize{\hfil \box\@tempboxa \hfil}
        \fi
        }
\newcommand{\Prob}{\mathbb{P}}
\newcommand{\tvnorm}[1]{\ensuremath{\|#1\|_{\mbox{\tiny{TV}}}}}
\newcommand{\Cone}{C}
\newcommand{\Conestar}{{\Cone^*}}
\newcommand{\PROJ}{\Pi_{\Cone}}
\newcommand{\ProjKcone}{\Pi_{\ConeSet}}
\newcommand{\ProjKconeStar}{\Pi_{\ConeSet^*}}
\newcommand{\Kpos}{\ConeSet_{+}}
\newcommand{\projLperp}{\Pi_{L^\perp}}
\newcommand{\epsopt}{\epsilon_{\text{{\tiny OPT}}}}
\newcommand{\epsglrt}{\epsilon_{\text{{\tiny GLR}}}}
\newcommand{\testglrt}{\phi}
\newcommand{\Diff}{Z(\theta)}
\newcommand{\DiffPlus}{Z^+(\theta)}
\newcommand{\DiffNeg}{Z^-(\theta)}
\newcommand{\starK}{\ConeSet^*}
\newcommand{\Mon}{\ensuremath{M}}
\newcommand{\upconst}{B_\rho}
\newcommand{\loconst}{b_\rho}
\newcommand{\lowconst}{\loconst}
\newcommand{\Cir}[1]{\text{Circ}_{#1}}
\newcommand{\Prc}{\ConeSet_{\times}}
\newcommand{\projPrc}{\Pi_{\ConeSet_{\times}}}
\newcommand{\Proj}{\ensuremath{\Pi}}
\newcommand{\ProjC}{\ensuremath{\Pi_C}}
\newcommand{\ProjCstar}{\ensuremath{\Pi_{C^*}}}
\newcommand{\Ccone}{\ensuremath{C}}
\newcommand{\Kcone}{\ensuremath{K}}
\newcommand{\KconeStar}{\ensuremath{\Kcone^*}}
\newcommand{\Aevent}{\ensuremath{\mathcal{A}}}
\newcommand{\Bevent}{\ensuremath{\mathcal{B}}}
\newcommand{\Mevent}{\ensuremath{\mathcal{C}}}
\newcommand{\Sevent}{\ensuremath{\mathcal{D}}}
\newcommand{\TERMONE}{T_1} \newcommand{\TERMTWO}{T_2}
\newcommand{\TERMTHREE}{T_3} 
\newcommand{\EYE}[1]{\ensuremath{I_{#1}}}
\newcommand{\LinSpacePerp}{{\LinSpace^\perp}}
\newcommand{\SPEC}{\ensuremath{A}}
\newcommand{\OldLmat}{\ensuremath{F}}
\newcommand{\OldAmat}{\ensuremath{G}}
\newcommand{\bprim}{\ensuremath{b'}}
\newcommand{\TEST}[3]{\ensuremath{\mathcal{T}(#1, #2; #3)}}
\newcommand{\UNIERR}{\ensuremath{\mathcal{E}}}
\newcommand{\EPSCRITSQ}{\ensuremath{\delta^2_{\mbox{\tiny{LR}}}
    (\ConeSmall, \ConeBig)}}
\newcommand{\EPSCRIT}{\ensuremath{\delta_{\mbox{\tiny{LR}}}
    (\ConeSmall, \ConeBig)}}
\newcommand{\NEWEPSCRITSQ}{\ensuremath{\delta^2_{\mbox{\tiny{LR}}}
    (\{0\}, \Kcone)}}
\newcommand{\EPSGEN}{\ensuremath{\delta_{\mbox{\tiny{OPT}}} (\ConeSmall,
    \ConeBig)}}
\newcommand{\EPSGENSQ}{\ensuremath{\delta^2_{\mbox{\tiny{OPT}}}(\ConeSmall,
    \ConeBig)}}
\newcommand{\EPSGENSQK}{\ensuremath{\delta^2_{\mbox{\tiny{OPT}}}(\{0\},
    \Kcone)}}
\newcommand{\psiglrt}{\ensuremath{\testglrt_\beta}}
\newcommand{\NewBall}{\ensuremath{B}}
\newcommand{\Hyp}{\ensuremath{\mathcal{H}}}
\newcommand{\HypTil}{\ensuremath{\widetilde{\Hyp}}}
\newcommand{\ProjK}{\ensuremath{\Pi_\Kcone}}
\newcommand{\newthresh}{\ensuremath{\tau}}
\newcommand{\psinew}{\ensuremath{\phi_\tau}}
\newcommand{\infolowconst}{\ensuremath{\kappa_\rho}}
\newcommand{\dprob}{\mathbb{D}}
\newcommand{\Tan}[2]{\mathcal{T}_{#1}(#2)}
\newcommand{\KposSet}{\mathcal{S}}
\newcommand{\SubSpace}[1]{\ensuremath{S_{#1}}}
\newcommand{\Myset}{\ensuremath{\mathcal{S}}}
\newcommand{\ONES}{\ensuremath{\mathbf{1}}}
\newcommand{\myspan}{\ensuremath{\mbox{span}}}
\newcommand{\ConeSmall}{\Cone_1}
\newcommand{\ConeBig}{\Cone_2}
\newcommand{\MACPROJ}[1]{\ensuremath{\Pi_{#1}}}
\newcommand{\PROJSMALL}{\MACPROJ{\ConeSmall}}
\newcommand{\PROJBIG}{\MACPROJ{\ConeBig}}
\newcommand{\myones}{\ensuremath{\mathbf{1}}}
\newcommand{\RayCone}{\ensuremath{R}}
\newcommand{\pdim}{\ensuremath{p}}
\newcommand{\Poly}{\ensuremath{P}}
\newcommand{\range}{\ensuremath{\operatorname{range}}}
\newcommand{\Convex}{\ensuremath{V}}
\newenvironment{carlist}
 {\begin{list}{$\bullet$}
 {\setlength{\topsep}{0in} \setlength{\partopsep}{0in}
  \setlength{\parsep}{0in} \setlength{\itemsep}{\parskip}
  \setlength{\leftmargin}{0.07in} \setlength{\rightmargin}{0.08in}
  \setlength{\listparindent}{0in} \setlength{\labelwidth}{0.08in}
  \setlength{\labelsep}{0.1in} \setlength{\itemindent}{0in}}}
 {\end{list}}
\newcommand{\bcar}{\begin{carlist}}
\newcommand{\ecar}{\end{carlist}}
\newcommand{\ConeSmallPolar}{\ensuremath{\ConeSmall^*}}
\begin{document}

%%%%%%% Title PAGE %%%%%%%%%%%%%%%%%%%%%%%%%%%%%%%%%%%%%%%%%%%%%%%%%%%

% \begin{frontmatter}

% \title{The geometry of hypothesis testing over convex cones:
%     Generalized likelihood ratio tests and minimax radii
%     }
% \runtitle{The geometry of hypothesis testing over convex cones
% % :Generalized likelihood tests and minimax radii
%     }

% \begin{aug}
% \author{\fnms{Yuting} \snm{Wei} \ead[label=e1]{ytwei@berkeley.edu}}  
% \and 
% \author{\fnms{Martin} \snm{J. Wainwright} \thanksref{t1} \ead[label=e2]{wainwrig@berkeley.edu}} 
% \and
% \author{\fnms{Adityanand} \snm{Guntuboyina} \thanksref{t2} \ead[label=e3]{aditya@stat.berkeley.edu}} 

% \thankstext{t1}{Supported in part by Office of Naval Research MURI
%   grant DOD-002888, Air Force Office of Scientific Research Grant
%   AFOSR-FA9550-14-1-001, and National Science Foundation Grants
%   CIF-31712-23800} 
% \thankstext{t2}{Supported by NSF Grant DMS-1309356, NSF CAREER Grant DMS-16-54589}

% \runauthor{Wei, Y., Wainwright, M. and Guntuboyina, A.}

% \affiliation{University of California at Berkeley}

% \address{Department of Statistics \\ University of California,
%   Berkeley \\ Berkeley, California 94720
%   \\ \printead{e1}\\ \phantom{E-mail:\ }\printead*{e2}\\ \phantom{E-mail:\ }\printead*{e3}
% }

% % \address{451 Evans Hall \\
% % Berkeley, CA 94720\\

% % }

% % \address{421 Evans Hall \\ 
% % Berkeley, CA 94720 \\
% % \printead{e2}
% % }

% % \address{423 Evans Hall\\
% % Berkeley, CA 94720 \\
% % \printead{e3}
% % }

% \end{aug}

\begin{center}

{\bf \LARGE{The geometry of hypothesis testing over convex cones:
    \\ Generalized likelihood tests and minimax radii}}

\vspace*{.2in}

{\large{
\begin{tabular}{ccccc}
Yuting Wei$^\dagger$ &&  Martin J. Wainwright$^{\dagger, \star}$ &&
Adityanand Guntuboyina$^\dagger$
\end{tabular}
}}

\vspace*{.2in} \today

 \begin{tabular}{c}
 Department of Statistics$^\dagger$, and \\ Department of Electrical
 Engineering and Computer Sciences$^\star$ \\ UC Berkeley, Berkeley,
 CA 94720
 \end{tabular}
 \vspace*{.2in}

\begin{abstract}
  We consider a compound testing problem within the Gaussian sequence
  model in which the null and alternative are specified by a pair of
  closed, convex cones.  Such cone testing problem arises in various
  applications, including detection of treatment effects, trend
  detection in econometrics, signal detection in radar processing, and
  shape-constrained inference in non-parametric statistics.  We
  provide a sharp characterization of the GLRT testing radius up to a
  universal multiplicative constant in terms of the geometric
  structure of the underlying convex cones.  When applied to concrete
  examples, this result reveals some interesting phenomena that do not
  arise in the analogous problems of estimation under convex
  constraints.  In particular, in contrast to estimation error, the
  testing error no longer depends purely on the problem complexity via
  a volume-based measure (such as metric entropy or Gaussian
  complexity); other geometric properties of the cones also play an
  important role. In order to address the issue of optimality, we
  prove information-theoretic lower bounds for the minimax testing
  radius again in terms of geometric quantities. Our general theorems
  are illustrated by examples including the cases of monotone and
  orthant cones, and involve some results of independent interest.
\end{abstract}

\end{center}

% \begin{keyword}[class=AMS]
% \kwd[Primary]{62F03}
% \kwd[; secondary ]{52A05}
% \end{keyword}
% \begin{keyword}
% hypothesis testing, closed convex cone, likelihood ratio test, minimax rate, Gaussian complexity
% \end{keyword}
% \end{frontmatter}

%%%%%%%%%%%%%%%%%%%%%%%%%%%%%%%%%%%%%%%%%%%%%%%%%%%%%%%%%%%%%%%%%%%%%%%%%%%%%%%%%%%%%%

%\vspace*{2cm}

\section{Introduction}
\label{SecIntro}

Composite testing problems arise in a wide variety of applications and
the generalized likelihood ratio test (GLRT) is a general purpose
approach to such problems.  The basic idea of the likelihood ratiotest dates back to the early works of Fisher, Neyman and Pearson; it
attracted further attention following the work of
Edwards~\cite{edwards1972wf}, who emphasized likelihood as a general
principle of inference.  Recent years have witnessed a great amount of
work on the GLRT in various contexts, including the
papers~\cite{lehmann2006testing,perlman1999emperor,lehmann2012likelihood,fan2001generalized,fan2007nonparametric}.
However, despite the wide-spread use of the GLRT, its optimality
properties have yet to be fully understood.  For suitably regular
problems, there is a great deal of asymptotic theory on the GLRT, and
in particular when its distribution under the null is independent of
nuisance parameters
(e.g.,~\cite{barlow1972statistical,robertson1978likelihood,raubertas1986hypothesis}).
On the other hand, there are some isolated cases in which the GLRT can
be shown to dominated by other tests
(e.g.,~\cite{warrack1984likelihood,MenSal91,menendez1992dominance,lehmann2012likelihood}).

In this paper, we undertake an in-depth study of the GLRT in
application to a particular class of composite testing problems of a
geometric flavor.  In this class of testing problems, the null and
alternative hypotheses are specified by a pair of closed convex cones
$\ConeSmall$ and $\ConeBig$, taken to be nested as $\ConeSmall \subset
\ConeBig$.  Suppose that we are given an observation of the form $y =
\theta + w$, where $w$ is a zero-mean Gaussian noise vector.  Based on
observing $y$, our goal is to test whether a given parameter $\theta$
belongs to the smaller cone $\ConeSmall$---corresponding to the null
hypothesis---or belongs to the larger cone $\ConeBig$.  Cone testing
problems of this type arise in many different settings, and there is a
fairly substantial literature on the behavior of the GLRT in
application to such problems (e.g., see the papers and
books~\cite{Bes06,kudo1963multivariate,robertson1978testing,raubertas1986hypothesis,
  robertson1986testing,robertson1982testing,meyer2003test,MenSal91,menendnez1992testing,
  dykstra1983testing,shapiro1988towards,warrack1984likelihood}, as
well as references therein).

%%%%%%%%%%%%%%%%%%%%%%%%%%%%%%%%%%%%%%%%%%%%%%%%%%%%%%%%%%%%%%%%%%%%%%%%%%%%%%%%%%

\subsection{Some motivating examples}\label{motex}

Before proceeding, let us consider some concrete examples so as to
motivate our study.

\bexs[Testing non-negativity and monotonicity in treatment effects]
\label{ExaMonotoneTreatment}
Suppose that we have a collection of $\usedim$ treatments, say
different drugs for a particular medical condition.  Letting $\theta_j
\in \real$ denote the mean of treatment $j$, one null hypothesis could
be that none of treatments has any effect---that is, $\theta_j = 0$
for all $j = 1, \ldots, \usedim$.  Assuming that none of the
treatments are directly harmful, a reasonable alternative would be
that $\theta$ belongs to the \emph{non-negative orthant cone}
\begin{align}
  \label{EqnOrthantConeIntro}
\Kpos & \defn \big \{ \theta \in \real^\usedim \mid \theta_j \geq 0
\quad \mbox{for all $j = 1, \ldots, \usedim$} \big \}.
\end{align}
This set-up leads to a particular instance of our general set-up with
$\ConeSmall = \{0 \}$ and $\ConeBig = \Kpos$.  Such orthant testing
problems have been studied by Kudo~\cite{kudo1963multivariate} and
Raubertas et al.~\cite{raubertas1986hypothesis}, among other people.

In other applications, our treatments might consist of an ordered set
of dosages of the same drug.  In this case, we might have reason to
believe that if the drug has any effect, then the treatment means
would obey a monotonicity constraint---that is, with higher dosages
leading to greater treatment effects.  One would then want to detect
the presence or absence of such a dose response effect. Monotonicity
constraints also arise in various types of econometric models, in
which the effects of strategic interventions should be monotone with
respect to parameters such as market size (e.g.,\cite{Che12}).  For
applications of this flavor, a reasonable alternative would be
specified by the \emph{monotone cone}
\begin{align}
  \label{EqnMonotoneConeIntro}
\Mon & \defn \big \{ \theta \in \real^\usedim \, \mid \, \theta_1 \leq
\theta_2 \leq \cdots \leq \theta_\usedim \big \}.
\end{align}
This set-up leads to another instance of our general problem with
$\ConeSmall = \{0\}$ and $\ConeBig = \Mon$.  The behavior of the GLRT
for this particular testing problem has also been studied in past
works, including papers by Barlow et al.~\cite{barlow1972statistical},
and Raubertas et al.~\cite{raubertas1986hypothesis}.

As a third instance of the treatment effects problem, we might like to
include in our null hypothesis the possibility that the treatments
have some (potentially) non-zero effect but one that remains constant
across levels---i.e., $\theta_1 = \theta_2 = \cdots = \theta_\usedim$.
% and is non-negative.  
In this case, our null hypothesis is specified by the \emph{ray cone}
\begin{align}
  \label{EqnRayConeIntro}
  \RayCone & \defn \big \{ \theta \in \real^\usedim \, \mid \, \theta =
  c \myones \quad \mbox{for some $c \in \real$} \big \}.
\end{align}
Supposing that we are interested in testing the alternative that the
treatments lead to a monotone effect, we arrive at another instance of
our general set-up with $\ConeSmall = \RayCone$ and $\ConeBig = \Mon$.
This testing problem has also been studied by
Bartholomew~\cite{bartholomew1959test,bartholomew1959testII} and
Robertson et al.~\cite{robertson1988order} among other researchers.

In the preceding three examples, the cone $C_1$ was linear
subspace. Let us now consider two more examples, adapted from
Menendnez et al.~\cite{menendnez1992testing}, in which $C_1$ is not a
subspace.  As before, suppose that component $\theta_i$ of the vector
$\theta \in \real^d$ denotes the expected response of treatment $i$.
In many applications, it is of interest to test equality of the
expected responses of a subset $S$ of the full treatment set $[d] =
\{1, \ldots, d\}$.  More precisely, for a given subset $S$ containing
the index $1$, let us consider the problem of testing the the null
hypothesis
\begin{align}
\label{EqnConeEgNonLin1}
\Cone_1 \equiv E(S) & \defn \Big \{ \theta \in \real^d \mid \,
\theta_i = \theta_1 \mbox{ $\forall \; i \in S$, and } \theta_j \geq
\theta_1 \mbox{ $\forall \; j \notin S$ } \Big \}
\end{align}
versus the alternative $\Cone_2 \equiv G(S) = \{ \theta \in \real^d
\mid \theta_j \geq \theta_1 \mbox{ $\forall \; j \in [\usedim]$} \}$.
Note that $C_1$ here is not a linear subspace.

As a final example, suppose that we have a factorial design consisting
of two treatments, each of which can be applied at two different
dosages (high and level).  Let $(\theta_1, \theta_2)$ denote the
expected responses of the first treat at the low and high dosages,
respectively, with the pair $(\theta_3, \theta_4)$ defined similarly
for the second treatment.  Suppose that we are interesting in testing
whether the first treatment at the lowest level is more effective than
the second treatment at the highest level.  This problem can be
formulated as testing the null cone
\begin{align}
\label{EqnConeEgNonLin2}
C_1 & \defn \{\theta \in \mathbb{R}^4 \mid \theta_1 \leq \theta_2 \leq
\theta_3\leq \theta_4\} \quad \mbox{versus the alternative} \notag \\
C_2 & \defn \{\theta \in \mathbb{R}^4 \mid \theta_1 \leq \theta_2,
\mbox{ and } ~\theta_3\leq \theta_4\}.
\end{align}
As before, the null cone $C_1$ is not a linear subspace.  

\eexs

\bexs[Robust matched filtering in signal processing]
\label{ExaRobustSignal}
In radar detection problems~\cite{Sch91}, a standard goal is to detect
the presence of a known signal of unknown amplitude in the presence of
noise.  After a matched filtering step, this problem can be reduced to
a vector testing problem, where the known signal direction is defined
by a vector $\gamma \in \real^d$, whereas the unknown amplitude
corresponds to a scalar pre-factor $c \geq 0$.  We thus arrive at a
ray cone testing problem: the null hypothesis (corresponding to the
absence of signal) is given $\ConeSmall = \{0 \}$, whereas the
alternative is given by the positive ray cone $\RayCone_{+} = \big \{\theta \in
\real^\usedim \mid \theta = c \gamma \; \mbox{for some $c \geq 0$}
\big \}$.

In many cases, there may be uncertainty about the target signal, or
jamming by adversaries, who introduce additional signals that can be
potentially confused with the target signal $\gamma$.  Signal
uncertainties of this type are often modeled by various forms of
cones, with the most classical choice being a subspace
cone~\cite{Sch91}.  In more recent work
(e.g.,~\cite{Bes06,GreGinFar08}), signal uncertainty has been modeled
using the \emph{circular cone} defined by the target signal direction,
namely
\begin{align}
  \label{EqnCircularConeIntro}
C(\gamma; \alpha) & \defn \big \{ \theta \in \real^\usedim \, \mid \,
\inprod{\gamma}{\theta} \geq \cos(\alpha) \, \|\gamma\|_2 \|\theta\|_2
\big \},
\end{align}
corresponding to the set of all vectors $\theta$ that have angle at
least $\alpha$ with the target signal.  Thus, we are led to another
instance of a cone testing problem involving a circular cone.

\eexs

\bexs[Cone-constrained testing in linear regression]
\label{ExaLinearRegression}
Consider the
standard linear regression model
  \begin{align}
    y & = X \beta + \sigma Z, \qquad \mbox{where $Z \sim N(0,
        I_{\numobs})$,} 
  \end{align}
  where $X \in \real^{\numobs \times \pdim}$ is a fixed and known
  design matrix.  In many applications, we are interested in testing
  certain properties of the unknown regression vector $\beta$, and
  these can often be encoded in terms of cone-constraints on the
  vector $\theta \defn X \beta$.  As a very simple example, the
  problem of testing whether or not $\beta = 0$ corresponds to testing
  whether $\theta \in \ConeSmall \defn \{0 \}$ versus the alternative
  that $\theta \in \ConeBig \defn \range(X)$.  Thus, we arrive at a
  \emph{subspace testing problem}.  We note this problem is known as
  testing the global null in the linear regression literature
  (e.g.,~\cite{buhlmann2013statistical}).  If instead we consider the
  case when the $\pdim$-dimensional vector $\beta$ lies in the
  non-negative orthant cone~\eqref{EqnOrthantConeIntro}, then our
  alternative for the $\numobs$-dimensional vector $\theta$ becomes
  the \emph{polyhedral cone}
  \begin{align}
    \label{EqnPolyhedralConeIntro}
  \Poly & \defn \big \{ \theta \in \real^\numobs \mid \, \theta = X
  \beta \quad \mbox{for some $\beta \geq 0$} \big \}.
  \end{align}
  The corresponding estimation problem with non-negative constraints
  on the coefficient vector $\beta$ has been studied by Slawski et
  al.~\cite{slawski2013non} and
  Meinshausen~\cite{meinshausen2013sign}; see also Chen et
  al. \cite{chen2009nonnegativity} for a survey of this line of work.
  In addition to these preceding two cases, we can also test various
  other types of cone alternatives for $\beta$, and these are
  transformed via the design matrix $X$ into other types of cones for
  the parameter $\theta \in \real^\numobs$.

  \eexs

\bexs[Testing shape-constrained departures from parametric models]
\label{ExaNonparametric}
Our third example is non-parametric in flavor.  Consider the class of
functions $f$ that can be decomposed as
\begin{align}
  \label{EqnDecomposition}
  f & = \sum_{j=1}^k a_j \phi_j + \psi.
\end{align}
Here the known functions $\{\phi_j \}_{j=1}^k$ define a linear space,
parameterized by the coefficient vector $a \in \real^k$, whereas the
unknown function $\psi$ models a structured departure from this linear
parametric class.  For instance, we might assume that $\psi$ belongs
to the class of monotone functions, or the class of convex functions.
Given a fixed collection of design points $\{t_i\}_{i=1}^\numobs$,
suppose that we make observations of the form \mbox{$y_i = f(t_i) +
  \sigma g_i$} for $i = 1, \ldots, \numobs$, where each $g_i$ is a
standard normal variable.  Defining the shorthand notation $\theta
\defn \big(f(t_1), \ldots, f(t_\numobs) \big)$ and $g = (g_1, \ldots,
g_\numobs)$, our observations can be expressed in the standard form
\mbox{$y = \theta + \sigma g$.}  If, under the null hypothesis, the
function $f$ satisfies the decomposition~\eqref{EqnDecomposition} with
$\psi = 0$, then the vector $\theta$ must belong to the subspace $\{
\Phi a \mid \, a \in \real^k \}$, where the matrix $\Phi \in
\real^{\numobs \times k}$ has entries $\Phi_{ij} = \phi_j(x_i)$.

Now suppose that the alternative is that $f$ satisfies the
decomposition~\eqref{EqnDecomposition} with some $\psi$ that is
convex.  A convexity constraint on $\psi$ implies that we can write
$\theta = \Phi a + \gamma$, for some coefficients $a \in \real^k$ and
a vector $\gamma \in \real^\numobs$ belonging to the \emph{convex cone}
\begin{align}
\Convex(\{t_i\}_{i=1}^\numobs) & \defn \Big \{ \gamma \in
\real^\numobs \mid \frac{\gamma_2 - \gamma_1}{t_2 - t_1} \leq
\frac{\gamma_3 - \gamma_2}{t_3 - t_2} \leq \cdots \leq
\frac{\gamma_\numobs - \gamma_{\numobs-1}}{t_\numobs - t_{\numobs-1}}
\Big \}.
\end{align}
This particular cone testing problem and other forms of shape
constraints have been studied by Meyer~\cite{meyer2003test}, as well
as by Sen and Meyer~\cite{sen2016testing}.
\eexs

%%%%%%%%%%%%%%%%%%%%%%%%%%%%%%%%%%%%%%%%%%%%%%%%%%%%%%%%%%%%%%%%%%%%%%%%%%%%%%%

\subsection{Problem formulation}

Having understood the range of motivations for our problem, let us now
set up the problem more precisely.  Suppose that we are given
observations of the form $y = \theta + \sigma g$, where $\theta \in
\real^\usedim$ is a fixed but unknown vector, whereas $g \sim N(0,
I_{\usedim})$ is a $\usedim$-dimensional vector of i.i.d. Gaussian entries and $\sigma^2$ is a known noise level. 
Our goal is to distinguish the null hypothesis that $\theta \in
\ConeSmall$ versus the alternative that $\theta \in \ConeBig
\backslash \ConeSmall$, where $\ConeSmall \subset \ConeBig$ are a
nested pair of closed, convex cones in $\real^\usedim$.

In this paper, we study both the fundamental limits of solving this
composite testing problem, as well as the performance of a specific
procedure, namely the \emph{generalized likelihood ratio test}, or
GLRT for short.  By definition, the GLRT for the problem of
distinguishing between cones $\ConeSmall$ and $\ConeBig$ is based on
the statistic
\begin{subequations}
\begin{align} 
\label{EqnGLRTstat}
T(\yvec) & \defn - 2 \log \left( \frac{\sup_{\theta \in \ConeSmall}
  \mprob_\theta(\yvec)}{\sup_{\theta \in \ConeBig}
  \mprob_{\theta}(\yvec)} \right).
\end{align}
It defines a family of tests, parameterized by a threshold parameter
\mbox{$\beta \in [0,\infty)$,} of the form
\begin{align} 
\label{EqnGLRT}
\psiglrt(\yvec) & \defn \Ind(T(\yvec) \geq \beta) \; =
\; \begin{cases} 1 & \mbox{if $T(\yvec) \geq \beta$} \\ 0 &
  \mbox{otherwise.}
\end{cases}
\end{align}
\end{subequations}

Thus far, our formulation of the testing problem allows for the
possibility that $\theta$ lies in the set $\ConeBig \backslash
\ConeSmall$, but is arbitrarily close to some element of $\ConeSmall$.
Thus, under this formulation, it is not possible to make any
non-trivial assertions about the power of the GLRT nor any other test
in a uniform sense.  Accordingly, so as to be able to make
quantitative statements about the performance of different statements,
we exclude a certain $\epsilon$-ball from the alternative.  This
procedure leads to the notion of the \emph{minimax testing radius}
associated this composite decision problem.  This minimax formulation
was introduced in the seminal work of Ingster and
co-authors~\cite{ingster1987minimax,ingster2012nonparametric}; since
then, it has been studied by many authors
(e.g.,~\cite{ermakov1991minimax,Spokoiny1998testing,lepski1999minimax,lepski2000asymptotically,baraud2002non}).

For a given $\epsilon > 0$, we define the \emph{$\epsilon$-fattening}
of the cone $\ConeSmall$ as
\begin{align}
\NewBall_2(\ConeSmall; \epsilon) & \defn \big \{ \theta \in \real^\usedim
\, \mid \, \min_{u \in \ConeSmall} \ltwo{\theta-u} \leq \epsilon \big \},
\end{align}
corresponding to the set of vectors in $\real^\usedim$ that are at
most Euclidean distance $\epsilon$ from some element of $\ConeSmall$.
We then consider the testing problem of distinguishing between the
two hypotheses
\begin{align}
  \label{EqnEpsTest}
\Hyp_0: \theta \in \ConeSmall \quad \mbox{and} \quad \Hyp_1: \theta
\in \ConeBig \backslash \NewBall_2(\ConeSmall; \epsilon).
\end{align}
To be clear, the parameter $\epsilon > 0$ is a quantity that is used
during the course of our analysis in order to titrate the difficulty of
the testing problem.  All of the tests that we consider, including the
GLRT, are not given knowledge of $\epsilon$.
Let us introduce shorthand $\TEST{\ConeSmall}{\ConeBig}{\epsilon}$ to denote this testing problem~\eqref{EqnEpsTest}.

Obviously, the testing problem~\eqref{EqnEpsTest} becomes more
difficult as $\epsilon$ approaches zero, and so it is natural to study
this increase in quantitative terms.  Letting $\psi: \real^\usedim 
\rightarrow \{0,1\}$ be any (measurable) test function, we measure its
performance in terms of its \emph{uniform error}
\begin{align}
  \label{EqnUniErr}
  \UNIERR(\psi; \ConeSmall, \ConeBig, \epsilon) & \defn \sup_{\theta \in
    \ConeSmall} \Exs_\theta[\psi(y)] + \sup_{\theta \in
    \ConeBig \backslash \NewBall_2(\epsilon; \ConeSmall)}
    \Exs_\theta[1 - \psi(y)],
\end{align}
which controls the worst-case error over both null and alternative.

For a given error level $\rho \in (0,1)$, we are interested in the
smallest setting of $\epsilon$ for which either the GLRT, or some
other test $\psi$ has uniform error at most $\rho$.  More precisely,
we define
\begin{subequations}
\begin{align}
\label{EqnDefnEpsMinMax}
\epsopt(\ConeSmall, \ConeBig; \rho) & \defn \inf \Big \{ \epsilon \,
\mid \, \inf_{\psi} \UNIERR(\psi; \ConeSmall, \ConeBig, \epsilon) \leq
\rho \Big \}, \quad \text{ and } \\
\label{EqnDefnEpsGLRT}
\epsglrt(\ConeSmall, \ConeBig; \rho) & \defn \inf \Big \{ \epsilon \,
\mid \, \inf_{\beta \in \real} \UNIERR(\psiglrt; \ConeSmall, \ConeBig,
\epsilon) \leq \rho \Big \}.
\end{align}
\end{subequations}
When the subspace-cone pair $(\ConeSmall, \ConeBig)$ are clear from
the context, we occasionally omit this dependence, and write
$\epsopt(\rho)$ and $\epsglrt(\rho)$ instead.  We refer to these two
quantities as the \emph{minimax testing radius} and the \emph{GLRT
  testing radius} respectively.

By definition, the minimax testing radius $\epsopt$ corresponds to the
smallest separation $\epsilon$ at which there exists \emph{some test}
that distinguishes between the hypotheses $\Hyp_0$ and $\Hyp_1$ in
equation~\eqref{EqnEpsTest} with uniform error at most $\rho$.  Thus,
it provides a fundamental characterization of the statistical
difficulty of the hypothesis testing.  On the other hand, the GLRT
testing radius $\epsglrt(\rho)$ provides us with the smallest radius
$\epsilon$ for which there exists \emph{some threshold}---say
$\beta^*$--- for which the associated generalized likelihood ratio
test $\testglrt_{\beta^*}$ distinguishes between the hypotheses with
error at most $\rho$.  Thus, it characterizes the performance limits
of the GLRT when an optimal threshold $\beta^*$ is chosen.  Of course,
by definition, we always have $\epsopt(\rho) \leq \epsglrt(\rho)$.  We
write $\epsopt(\rho) \asymp \epsglrt(\rho)$ to mean that---in addition
to the previous upper bound---there is also a lower bound
$\epsopt(\rho) \geq c_\rho \epsglrt(\rho)$ that matches up to a
constant $c_\rho > 0$ depending only on $\rho$.

%%%%%%%%%%%%%%%%%%%%%%%%%%%%%%%%%%%%%%%%%%%%%%%%%%%%%%%%%%%%%%%%%%%%%%%%%%%%%%

\subsection{Overview of our results}

Having set up the problem, let us now provide a high-level overview of
the main results of this paper.

\bcar
  \item Our first main result, stated as Theorem~\ref{ThmGLRT} in
    Section~\ref{SecGLRTResults}, gives a sharp
    characterization---meaning upper and lower bounds that match up to
    universal constants---of the GLRT testing radius $\epsglrt$ for
    cone pairs $(\ConeSmall, \ConeBig)$ that are non-oblique (we
    discuss the non-obliqueness property and its significance at
    length in Section~\ref{SecNonOblique}).  We illustrate the
    consequences of this theorem for a number of concrete cones,
    include the subspace cone, orthant cone, monotone cone, circular
    cone and a Cartesian product cone.

\item In our second main result, stated as Theorem~\ref{ThmLBGen} in
  Section~\ref{SecGeneralLower}, we derive a lower bound that applies
  to any testing function.  It leads to a corollary that provides
  sufficient conditions for the GLRT to be an optimal test, and we use
  it to establish optimality for the subspace cone and
  circular cone, among other examples.  We then revisit the Cartesian
  product cone, first analyzed in the context of
  Theorem~\ref{ThmGLRT}, and use Theorem~\ref{ThmLBGen} to show that
  the GLRT is sub-optimal for this particular cone, even though it is
  in no sense a pathological example.

\item For the monotone and orthant cones, we find that the lower bound
  established in Theorem~\ref{ThmLBGen} is not sharp, but that the
  GLRT turns out to be an optimal test.  Thus, Section~\ref{SecDetail}
  is devoted to a detailed analysis of these two cases, in particular
  using a more refined argument to obtain sharp lower bounds.
\ecar

The remainder of this paper is organized as follows:
Section~\ref{SecBackground} provides background on conic geometry,
including conic projections, the Moreau decomposition, and the notion
of Gaussian width.  It also introduces the notion of a non-oblique
pair of cones, which have been studied in the context of the GLRT.  In
Section~\ref{SecMain}, we state our main results and illustrate their
consequences via a series of examples.  Sections~\ref{SecGLRTResults}
and~\ref{SecGeneralLower} are devoted, respectively, to our sharp
characterization of the GLRT and a general lower bound on the minimax
testing radius.  Section~\ref{SecDetail} explores the monotone and
orthant cones in more detail.  In Section~\ref{SecProofs}, we provide
the proofs of our main results, with certain more technical aspects
deferred to the appendix sections.

\paragraph{Notation}  Here we summarize some notation used throughout
the remainder of this paper.  For functions $f(\sigma, \usedim)$ and
$g(\sigma, \usedim)$, we write $f(\sigma, \usedim) \lesssim g(\sigma,
\usedim)$ to indicate that $f(\sigma, \usedim) \leq cg(\sigma,
\usedim)$ for some constant $c \in (0, \infty)$ that may only depend on $\rho$ but 
independent of $(\sigma, \usedim)$, and similarly for $f(\sigma,
\usedim) \gtrsim g(\sigma, \usedim)$.  We write $f(\sigma, \usedim)
\asymp g(\sigma, \usedim)$ if both $f(\sigma, \usedim) \lesssim
g(\sigma, \usedim)$ and $f(\sigma, \usedim) \gtrsim g(\sigma,
\usedim)$ are satisfied.

%%%%%%%%%%%%%%%%%%%%%%%%%%%%%%%%%%%%%%%%%%%%%%%%%%%%%%%%%%%%%%%%%%%%%%%%%%%%%%%%%%%%%%

\section{Background on conic geometry and the GLRT}
\label{SecBackground}

In this section, we provide some necessary background on cones and
their geometry, including the notion of a polar cone and the Moreau
decomposition.  We also define the notion of a non-oblique pair of
cones, and summarize some known results about properties of the GLRT
for such cone testing problems.

%%%%%%%%%%%%%%%%%%%%%%%%%%%%%%%%%%%%%%%%%%%%%%%%%%%%%%%%%%%%%%%%%%%%%%%%%%%%%%%%%

\subsection{Convex cones and Gaussian widths}

For a given closed convex cone $\Cone \subset \real^\usedim$, we
define the Euclidean projection operator $\PROJ: \real^\usedim
\rightarrow \Cone$ via
\begin{align}
\PROJ(v) \defn \arg \min_{u \in \Cone} \ltwo{v - u }.
\end{align}
By standard properties of projection onto closed convex sets, we are
guaranteed that this mapping is well-defined.  We also define the
polar cone
\begin{align}
\label{EqnDefnPolar}
\Conestar & \defn \big \{v \in \real^\usedim \mid \inprod{v}{u} \leq 0
\quad \mbox{for all $u \in \Cone$} \big\}.
\end{align}
Figure~\ref{FigCone}(b) provides an illustration of a cone in
comparison to its polar cone.  Using $\ProjCstar$ to denote the
projection operator onto this cone, Moreau's
theorem~\cite{moreau1962decomposition} ensures that every vector $v
\in \real^\usedim$ can be decomposed as
\begin{align}
\label{EqnMoreau}
v & = \ProjC(v) + \ProjCstar(v), \quad \mbox{and such that
  $\inprod{\ProjC(v)}{\ProjCstar(v)} = 0$.}
\end{align}
We make frequent use of this decomposition in our analysis.

Let $\Sphere{} \defn \{ u \in \real^\usedim \mid \ltwo{u} = 1 \}$
denotes the Euclidean sphere of unit radius.  For every set $A \subseteq
\Sphere{}$, we define its \emph{Gaussian width} as 
\begin{align}
  \Width(A) & \defn \Exs \big[\sup_{u\in A} \inprod{u}{g} \big] \qquad
  \mbox{where $g \sim \NORMAL(0, \EYE{\usedim})$.}
\end{align}
This quantity provides a measure of the size of the set $A$; indeed,
it can be related to the volume of $A$ viewed as a subset of the
Euclidean sphere.  The notion of Gaussian width arises in many
different areas, notably in early work on probabilistic methods in
Banach spaces~\cite{pisier1986probabilistic}; the Gaussian complexity,
along with its close relative the Rademacher complexity, plays a
central role in empirical process
theory~\cite{van2000applications,koltchinskii2001rademacher,bartlett2005local}.

Of interest in this paper are the Gaussian widths of sets of the form
$A = \Ccone \cap \Sphere{}$, where $\Ccone$ is a closed convex
cone.  For a set of this form, using the Moreau
decomposition~\eqref{EqnMoreau}, we have the useful equivalence
\begin{align}
\label{EqnWidthEquivalence}
\Width(\Ccone \cap \Sphere{}) \; = \; \Exs \big[\sup_{u\in
    \Ccone \cap \Sphere{}} \inprod{u}{\ProjC(g) +
    \ProjCstar(g)} \big] \; = \; \Exs \ltwo{\ProjC(g)},
\end{align}
where the final equality uses the fact ƒthat $\inprod{u}{\ProjCstar(g)}
\leq 0$ for all vectors $u \in \Ccone$, with equality holding when $u$
is a non-negative scalar multiple of $\ProjC(g)$.

For future reference, let us derive a lower bound on $\Exs
\ltwo{\ProjC g}$ that holds for every cone $\Cone$ strictly larger
than $\{0\}$.  Take some non-zero vector $u \in \Cone$ and let $R_{+} =
\{ c u \mid c \geq 0 \}$ be the ray that it defines.  Since $R_{+}
\subseteq \Cone$, we have $\ltwo{\ProjC g} \geq \ltwo{\Pi_{R_{+}} g}$. But since $R_{+}$ is just a ray,
the projection $\Pi_{R_{+}}(g)$ is a standard normal variable truncated to
be positive, and hence
\begin{align}
\label{EqnConstLower}
\Exs \ltwo{\ProjC g} \ge \Exs \ltwo{\Pi_{R_{+}} g} =
\sqrt{\frac{1}{2\pi}}.
\end{align}
This lower bound is useful in parts of our development.

%%%%%%%%%%%%%%%%%%%%%%%%%%%%%%%%%%%%%%%%%%%%%%%%%%%%%%%%%%%%%%%%%%%%%%%%%%%%%%%

\subsection{Cone-based GLRTs and non-oblique pairs}
\label{SecNonOblique}

In this section, we provide some background on the notion of
non-oblique pairs of cones, and their significance for the GLRT.
First, let us exploit some properties of closed convex cones in order
to derive a simpler expression for the GLRT test
statistic~\eqref{EqnGLRTstat}.  Using the form of the multivariate
Gaussian density, we have
\begin{align}
  \label{EqnMulti}
T(\yvec) \; = \; \min_{\theta \in \ConeSmall} \|y - \theta\|_2^2 -
\min_{\theta \in \ConeBig} \|y - \theta\|_2^2 & = \|y -
\Proj_{\ConeSmall}(y)\|_2^2 - \|y - \Proj_{\ConeBig}(y)\|_2^2\\
& = \|\Proj_{\ConeBig}(y)\|_2^2 - \|\Proj_{\ConeSmall}(y)\|_2^2,
\end{align}
where we have made use of the Moreau decomposition to assert that
\begin{align*}
  \ltwo{y - \Proj_{\ConeSmall}(y)}^2 = \ltwo{y}^2 -
  \ltwo{\Proj_{\ConeSmall}(y)}^2, \quad \mbox{and} \quad \ltwo{y -
    \Proj_{\ConeBig}(y)}^2 = \ltwo{y}^2 -
  \ltwo{\Proj_{\ConeBig}(y)}^2.
\end{align*}
Thus, we see that a cone-based GLRT has a natural interpretation: it
compares the squared amplitude of the projection of $\yvec$ onto the
two different cones.

When $\ConeSmall = \{0 \}$, then it can be shown that under the null
hypothesis (i.e., $y \sim N(0, \sigma^2 I_{\usedim})$), the statistic
$T(\yvec)$ (after rescaling by $\sigma^2$) is a mixture of
$\chi^2$-distributions (see e.g.,~\cite{raubertas1986hypothesis}). On
the other hand, for a general cone pair $(\ConeSmall, \ConeBig)$, it
is not straightforward to characterize the distribution of $T(\yvec)$
under the null hypothesis.  Thus, past work has studied conditions on
the cone pair under which the null distribution has a simple
characterization.  One such condition is a certain non-obliqueness
property that is common to much past work on the GLRT
(e.g.,~\cite{warrack1984likelihood,MenSal91,menendnez1992testing,HuWr94}).
The non-obliqueness condition, first introduced by Warrack et
al.~\cite{warrack1984likelihood}, is also motivated by the fact that
are many instances of oblique cone pairs for which the GLRT is known
to dominated by other tests.  Menendez et
al.~\cite{menendez1992dominance} provide an explanation for this
dominance in a very general context; see also the
papers~\cite{menendnez1992testing, HuWr94} for further studies of
non-oblique cone pairs.

A nested pair of closed convex cones $\ConeSmall \subset \ConeBig$ is
said to be \emph{non-oblique} if we have the successive projection
property
\begin{align}
  \label{EqnNonOblique}
  \PROJSMALL(x) & = \PROJSMALL(\PROJBIG(x)) \qquad \mbox{for all $x
    \in \real^\usedim$.}
\end{align}
For instance, this condition holds whenever one of the two cones is a
subspace, or more generally, whenever there is a subspace $\LinSpace$
such that $\ConeSmall \subseteq \LinSpace \subseteq \ConeBig$; see Hu
and Wright~\cite{HuWr94} for details of this latter property.  To be
clear, these conditions are sufficient---but not necessary---for
non-obliqueness to hold.  There are many non-oblique cone pairs in
which neither cone is a subspace; the cone
pairs~\eqref{EqnConeEgNonLin1} and~\eqref{EqnConeEgNonLin2}, as
discussed in Example~\ref{ExaMonotoneTreatment} on treatment testing,
are two such examples.  (We refer the reader to Section 5 of the
paper~\cite{menendnez1992testing} for verification of these
properties.)  More generally, there are various non-oblique cone pairs
that do not sandwich a subspace $\LinSpace$.

The significance of the non-obliqueness condition lies in the
following decomposition result.  For any nested pair of closed convex
cones $\ConeSmall \subset \ConeBig$ that are non-oblique, for all $x
\in \real^\usedim$ we have
\begin{align}
  \label{EqnZara}
  \Proj_{\ConeBig}(x) & = \Proj_{\ConeSmall}(x) + \Proj_{\ConeBig \cap
    \ConeSmallPolar}(x) \quad \mbox{and} \quad
  \inprod{\Proj_{\ConeSmall}(x)}{\Proj_{\ConeBig \cap
      \ConeSmallPolar}(x)} = 0.
\end{align}
This decomposition follows from general theory due to
Zarantonello~\cite{Zar71}, who proves that for non-oblique cones, we
have $\Proj_{\ConeBig \cap \ConeSmallPolar} = \Proj_{\ConeSmallPolar}
\Proj_{\ConeBig}$---in particular, see Theorem 5.2 in his paper.

An immediate consequence of the decomposition~\eqref{EqnZara} is that
the GLRT for any non-oblique cone pair $(\ConeSmall, \ConeBig)$ can be
written as
\begin{align*}
T(\yvec) = \|\Proj_{\ConeBig}(y)\|_2^2 - \|\Proj_{\ConeSmall}(y)\|_2^2
&\; = \; \|\Proj_{\ConeBig \cap \ConeSmallPolar}(y)\|_2^2 \\
&\; = \; \|y \|_2^2 - \min_{ \theta \in \ConeBig \cap \ConeSmallPolar} \|y - \theta\|_2^2.
\end{align*}
Consequently, we see that the GLRT for the pair $(\ConeSmall,
\ConeBig)$ is equivalent to---that is, determined by the same
statistic as---the GLRT for testing the \emph{reduced hypothesis}
\begin{align}
  \label{EqnSimpleEquiv}
\HypTil_0: \theta = 0 \quad \mbox{versus} \quad \HypTil_1: \theta \in
\big( \ConeBig \cap \ConeSmallPolar \big) \backslash
\NewBall_2(\epsilon).
\end{align}
Following the previous notation, write it as $\TEST{\{0\}}{\ConeBig \cap \ConeSmallPolar}{\epsilon}$ and we make frequent use of this convenient reduction in the sequel.

%%%%%%%%%%%%%%%%%%%%%%%%%%%%%%%%%%%%%%%%%%%%%%%%%%%%%%%%%%%%%%%%%%%%%%%%%%%%%%%%%%%%%%

\section{Main results and their consequences}
\label{SecMain}

We now turn to the statement of our main results, along with a
discussion of some of their consequences.
Section~\ref{SecGLRTResults} provides a sharp characterization of the
minimax radius for the generalized likelihood ratio test up to a
universal constant, along with a number of concrete examples.  In
Section~\ref{SecGeneralLower}, we state and prove a general lower
bound on the performance of any test, and use it to establish the
optimality of the GLRT in certain settings, as well as its
sub-optimality in other settings.  In Section~\ref{SecDetail}, we
revisit and study in details two cones of particular interest, namely
the orthant and monotone cones.

%%%%%%%%%%%%%%%%%%%%%%%%%%%%%%%%%%%%%%%%%%%%%%%%%%%%%%%%%%%%%%%%%%%%%%%%%%%%%%%%%%%%%%

\subsection{Analysis of the generalized likelihood ratio test}
\label{SecGLRTResults}

Let $(\ConeSmall, \ConeBig)$ be a nested pair of closed cones
$\ConeSmall \subseteq \ConeBig$ that are
non-oblique~\eqref{EqnNonOblique}.  Consider the polar cone
$\ConeSmallPolar$ as well as the intersection cone $\Kcone = \ConeBig
\cap \ConeSmallPolar$.  Letting $g \in \real^{\usedim}$ denote a
standard Gaussian random vector, we then define the quantity
\begin{align} 
\label{EqnTightGLRT}
\EPSCRITSQ & \defn \min \Biggr\{ \Exs \ltwo{\ProjK g}, ~~
\Big(\frac{\Exs \ltwo{\ProjK g}} {\max \{ 0, ~\inf \limits_{\eta \in
    \Kcone \cap \Sphere{}} \inprod{\eta}{\Exs\ProjK g} \}}
\Big)^2 \Biggr \}.
\end{align}
Note that $\EPSCRITSQ$ is a purely geometric object, depending on the
pair $(\ConeSmall, \ConeBig)$ via the new cone \mbox{$\Kcone =
  \ConeBig \cap \ConeSmallPolar$,} which arises due to the GLRT
equivalence~\eqref{EqnSimpleEquiv} discussed previously.

Recall that the GLRT is based on applying a threshold, at some level
\mbox{$\beta \in [0, \infty)$,} to the likelihood ratio statistic
  $T(y)$; in particular, see equations~\eqref{EqnGLRTstat}
  and~\eqref{EqnGLRT}.  In the following theorem, we study the
  performance of the GLRT in terms of the the uniform testing error
  $\UNIERR(\psiglrt; \ConeSmall, \ConeBig, \epsilon)$ from
  equation~\eqref{EqnUniErr}.  In particular, we show that the
  critical testing radius for the GLRT is governed by the geometric
  parameter $\EPSCRITSQ$.

\begin{theos} 
  \label{ThmGLRT}
  There are numbers $\{ (\lowconst, \upconst), \rho \in (0,1/2) \}$
  such that for every pair of non-oblique closed convex cones
  $(\ConeSmall,\ConeBig)$ with $\ConeSmall$ strictly contained within
  $\ConeBig$:
 \begin{enumerate}[(a)]
\item For every error probability $\rho \in (0, 0.5)$, we have
\begin{subequations}
\begin{align}
\inf_{\beta \in [0, \infty)} \UNIERR(\psiglrt; \ConeSmall, \ConeBig,
  \epsilon) & \leq \rho \qquad \mbox{for all $\epsilon^2 \geq \upconst
    \; \sigma^2 \; \EPSCRITSQ$.}
\end{align}
\item Conversely, for every error probability $\rho \in (0,0.11]$, we
have
\begin{align}
\inf_{\beta \in [0, \infty)} \UNIERR(\psiglrt; \ConeSmall, \ConeBig,
  \epsilon) & \geq \rho \qquad \mbox{for all $\epsilon^2 \leq \lowconst
    \; \sigma^2 \; \EPSCRITSQ$.}
\end{align}
\end{subequations}
\end{enumerate}
\end{theos}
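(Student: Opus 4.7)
\emph{Overall strategy and reduction.} The starting point is the reduction from Section~\ref{SecNonOblique}: non-obliqueness of $(\ConeSmall,\ConeBig)$ lets me replace the original problem by testing $\HypTil_0: \theta = 0$ versus $\HypTil_1: \theta \in \Kcone \setminus \NewBall_2(\epsilon)$ with statistic $T(y) = \|\ProjK y\|_2^2$. Setting $W(h) := \|\ProjK h\|_2 = \sup_{u \in \Kcone,\, \|u\|_2 \le 1} \inprod{u}{h}$, the support-function representation identifies $W$ as convex and $1$-Lipschitz in $h$, so Borell's inequality delivers sub-Gaussian concentration of $W(\theta + \sigma g)$ at scale $\sigma$ around its mean, for every $\theta$. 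Writing $w := \Width(\Kcone \cap \Sphere{}) = \Exs W(g)$, choosing the threshold $\beta = (\sigma w + c_1 \sigma \sqrt{\log(1/\rho)})^2$ controls the Type I error below $\rho/2$, and the Type II analysis reduces to proving the mean-shift estimate $\Exs W(\theta + \sigma g) \ge \sigma w + c_2 \sigma \sqrt{\log(1/\rho)}$ uniformly over $\theta \in \Kcone$ with $\|\theta\|_2 \ge \epsilon$.

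\emph{Upper bound (part~(a)).} To obtain this mean-shift estimate I would study the path $f(t) := \Exs W(\sigma g + t\theta)$ on $[0,1]$; convexity of $W$ in its argument makes $f$ convex, so $f(1) \ge f(0) + f'(0) = \sigma w + f'(0)$. By the envelope theorem the subgradient is $\nabla W(h) = \ProjK(h)/W(h)$, and combining this with Moreau's decomposition and the inequality $\inprod{\theta}{\ProjKstar(h)} \le 0$ for $\theta \in \Kcone$ yields
\[
f'(t) \;\ge\; \Exs\!\left[\frac{\inprod{\sigma g + t\theta}{\theta}}{W(\sigma g + t\theta)}\right].
\]
Evaluating at $t = 0$ and replacing the random denominator $W(g)$ by $w$ on the high-probability event $\{W(g) \in [w/2,\,2w]\}$---with the complement handled by Cauchy--Schwarz and the universal lower bound~\eqref{EqnConstLower}---produces $f'(0) \gtrsim \mu \|\theta\|_2/w$, where $\mu := \max\{0,\, \inf_{\eta \in \Kcone \cap \Sphere{}} \inprod{\eta}{\Exs \ProjK g}\}$; this handles the regime $\EPSCRITSQ = (w/\mu)^2$. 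Integrating the same inequality over $t \in [0,1]$ and exploiting the deterministic contribution $t\|\theta\|_2^2$ in the numerator yields the complementary quadratic bound $f(1) - f(0) \gtrsim \|\theta\|_2^2/(\sigma w)$, which handles the regime $\EPSCRITSQ = w$. Taking the maximum of the two shifts forces the required mean gap whenever $\|\theta\|_2^2 \gtrsim \sigma^2 \EPSCRITSQ$.

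\emph{Matching GLRT lower bound (part~(b)).} For the converse direction I would exhibit, for each $\epsilon^2 \le \loconst \sigma^2 \EPSCRITSQ$, a specific $\theta_\epsilon \in \Kcone$ of norm $\epsilon$ whose induced law of $T(y)$ is within total variation $1 - 2\rho$ of the null law, so that no threshold test---including the GLRT with optimal $\beta$---can separate them with combined error below $\rho$. When $\EPSCRITSQ = w$, any fixed direction of $\Kcone$ suffices: a second-order Taylor expansion of $f(t)$ together with the Gaussian Poincar\'e inequality $\var W(\cdot) \le \sigma^2$ yields the matching upper estimate $\Exs W(\sigma g + \theta_\epsilon) \le \sigma w + C \|\theta_\epsilon\|_2^2/(\sigma w)$. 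When $\EPSCRITSQ = (w/\mu)^2$, I choose $\theta_\epsilon$ proportional to an almost-minimizer of $\inprod{\eta}{\Exs \ProjK g}$ over $\eta \in \Kcone \cap \Sphere{}$, which forces $f'(0) \lesssim \mu \|\theta_\epsilon\|_2/w \lesssim \sigma$. In either regime the null and alternative means of $W$ differ by at most $O(\sigma)$ while both laws have $\sigma$-scale fluctuations, so a standard two-point Le~Cam/total-variation argument delivers the claimed error lower bound.

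\emph{Main obstacle.} The crux of the whole argument is the lower bound on $f'(0)$: the expectation $\Exs[\inprod{\ProjK g}{\eta}/W(g)]$ has a random denominator that is coupled to its numerator, and atypically small values of $W(g)$ can make the integrand blow up. Controlling them will require combining the sharp Gaussian concentration of $W(g)$ around $w$ with the universal positive lower bound $\Exs W(g) \ge 1/\sqrt{2\pi}$ from~\eqref{EqnConstLower} and a careful truncation-plus-Cauchy--Schwarz argument on the atypical event $\{W(g) < w/2\}$. The analogous upper bound on $\Exs W(\sigma g + \theta)$ needed in part~(b) is delicate for the same reason, and requires a matching second-order Taylor expansion of $f$ rather than the one-term estimate that sufficed in part~(a).
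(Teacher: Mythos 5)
Your plan for part~(a) is broadly workable and is a re-parametrization of what the paper does (the paper bounds $\Gamma(\theta)=\Exs[\ltwo{\ProjK(\theta+g)}-\ltwo{\ProjK g}]$ directly through the cone inequality of Lemma~\ref{LemCuteBasic} and a positive-part/Jensen argument rather than by differentiating the path $t\mapsto \Exs\ltwo{\ProjK(tg+\theta)}$), but two statements need repair. Writing $w\defn \Exs\ltwo{\ProjK g}$, the integrated bound $f(1)-f(0)\gtrsim \ltwo{\theta}^2/(\sigma w)$ cannot hold once $\ltwo{\theta}\gg w$, since $1$-Lipschitzness caps the increment at $\ltwo{\theta}$; the correct (and sufficient) form is $\gtrsim\min\{\ltwo{\theta},\ltwo{\theta}^2/w\}$ minus a constant, as in~\eqref{Eqngl1.eq}. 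More seriously, the bound $f'(0)\gtrsim \mu\ltwo{\theta}/w$ is exactly the delicate point: the fluctuations of $\inprod{\theta}{\ProjK g}$ are of order $\ltwo{\theta}$ and can swamp $\mu\ltwo{\theta}/w$ when $\mu$ is small; the paper escapes this because in the relevant regime $\inprod{\theta}{\Exs\ProjK g}\geq\sqrt{\upconst}\,w$ is a \emph{large} constant, so the bad event $\{\inprod{\theta}{\ProjK g}\leq \tfrac12\inprod{\theta}{\Exs\ProjK g}\}$ is exponentially small (the factor $\alpha(\theta)$ in Lemma~\ref{LemLSEup}); a truncation argument that only invokes $\Exs\ltwo{\ProjK g}\geq 1/\sqrt{2\pi}$ will not close this.

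The decisive gaps are in part~(b). First, in the regime $\EPSCRITSQ=(w/\mu)^2$ you bound only $f'(0)$; since $f$ is convex, $f(1)-f(0)\geq f'(0)$, so a small derivative at $t=0$ gives no upper bound on $\Gamma(\theta_\epsilon)$. You would need to control $f'(1)=\Exs[\inprod{\ProjK(g+\theta)}{\theta}/\ltwo{\ProjK(g+\theta)}]$, where the projection is now correlated with $\theta$; this is precisely the content of the paper's Lemma~\ref{LemLatte} and does not follow from a Taylor/Poincar\'e step. Second, in the regime $\EPSCRITSQ=w$, ``any fixed direction of $\Kcone$'' fails: for the orthant cone $\EPSCRITSQ\asymp\sqrt{\usedim}$, but $\theta=\epsilon\ONES/\sqrt{\usedim}$ gives $\Gamma(\theta)\asymp\inprod{\theta}{\Exs\ProjK g}/w\asymp\epsilon$, which is large when $\epsilon^2\asymp\sqrt{\usedim}$; the hard vector must in addition satisfy $\inprod{\theta}{\Exs\ProjK g}\leq\sqrt{\lowconst}\,w$, i.e.\ condition~\eqref{EqnKacey}, so the minimizing direction is needed in both regimes. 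Third, and most fundamentally, the concluding step ``means differ by $O(\sigma)$ and both laws have $\sigma$-scale fluctuations, hence a two-point TV argument gives error $\geq\rho$'' is not valid: sub-Gaussian concentration is an upper bound on fluctuations and provides no anti-concentration, so a $1/16$ mean shift of the statistic could in principle be separated perfectly by some threshold (two laws with $O(1)$ fluctuation scale can have total variation distance $1$). The paper needs a genuine anti-concentration input here: $\ltwo{\ProjK g}$ is a mixture of $\chi$-distributions whose density is bounded by $4/5$, together with Lemma~\ref{LemCLT} (via the Goldstein--Nourdin--Peccati normal approximation) giving $\Prob(\ltwo{\ProjK g}>\Exs\ltwo{\ProjK g})>7/16$ when $\Exs\ltwo{\ProjK g}\geq 128$, plus a separate simple-versus-simple $\chi^2$/TV reduction when $\Exs\ltwo{\ProjK g}$ is bounded (Appendix~\ref{AppScene2}); none of this is supplied by a ``standard Le~Cam'' step, and you also do not treat the small-width case at all.
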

\paragraph{Remarks} While our proof leads to universal values for
the constants $\upconst$ and $\loconst$, we have made no efforts to
obtain the sharpest possible ones, so do not state them here.  In any
case, our main interest is to understand the scaling of the testing
radius with respect to $\sigma$ and the geometric parameters of the
problem.  In terms of the GLRT testing radius $\epsglrt$ previously
defined~\eqref{EqnDefnEpsGLRT}, Theorem~\ref{ThmGLRT} establishes that
\begin{align}
  \label{EqnThmGLRTSummary}
\epsglrt(\ConeSmall, \ConeBig; \rho) & \asymp \sigma \; \EPSCRIT,
\end{align}
where $\asymp$ denotes equality up to constants depending on $\rho$,
but independent of all other problem parameters.  Since $\epsglrt$
always upper bounds $\epsopt$ for every fixed level $\rho$, we can
also conclude from Theorem~\ref{ThmGLRT} that
\begin{align*}
\epsopt(\ConeSmall, \ConeBig; \rho) & \lesssim \sigma \; \EPSCRIT.
\end{align*}
It is worthwhile noting that the quantity $\EPSCRITSQ$ depends on the
pair $(\ConeSmall, \ConeBig)$ only via the new cone \mbox{$\Kcone =
  \ConeBig \cap \ConeSmallPolar$.}  Indeed, as discussed in
Section~\ref{SecNonOblique}, for any pair of non-oblique closed convex
cones, the GLRT for the original testing problem~\eqref{EqnEpsTest} is
equivalent to the GLRT for the modified testing problem
$\TEST{\{0\}}{\Kcone}{\epsilon}$.

Observe that the quantity $\EPSCRITSQ$ from
equation~\eqref{EqnTightGLRT} is defined via the minima of two terms.
The first term $\Exs \ltwo{\ProjK g}$ is the (square root of the)
Gaussian width of the cone $\Kcone$, and is a familiar quantity from
past work on least-squares estimation involving convex
sets~\cite{van1996weak,chatterjee2014new}.  The Gaussian width measure
the size of the cone $\Kcone$, and it is to be expected that the
minimax testing radius should grow with this size, since $\Kcone$
characterizes the set of possible alternatives.  The second term
involving the inner product $\inprod{\eta}{\Exs\ProjK g}$ is less
immediately intuitive, partly because no such term arises in
estimation over convex sets.  The second term becomes dominant in
cones for which the expectation $v^* \defn \Exs[\ProjK g]$ is
relatively large; for such cones, we can test between the null and
alternative by performing a univariate test after projecting the data
onto the direction $v^*$.  This possibility only arises for cones that
are more complicated than subspaces, since $\Exs[\ProjK g] = 0$ for
any subspace $\Kcone$.

Finally, we note that Theorem 1 gives a sharp characterization of the
behavior of the GLRT up to a constant. It is different from the usual
minimax guarantee.  To the best of our knowledge, it is the first
result to provide tight upper and lower control on the uniform
performance of a specific test.

%%%%%%%%%%%%%%%%%%%%%%%%%%%%%%%%%%%%%%%%%%%%%%%%%%%%%%%%%%%%%%%%%%%%%%%%%%%%%%%%

\subsubsection{Consequences for convex set alternatives}

Although Theorem~\ref{ThmGLRT} applies to cone-based testing problems,
it also has some implications for a more general class of problems
based on convex set alternatives.  In particular, suppose that we are
interested in the testing problem of distinguishing between
\begin{align}
  \label{EqnConvexSetTest}
\Hyp_0: \theta = \theta_0,~~~\text{versus}~~~ \Hyp_1: \theta \in
\Myset,
\end{align}
where $\Myset$ is a not necessarily a cone, but rather an arbitrary
closed convex set, and $\theta_0$ is some vector such that $\theta_0 \in \Myset$.
Consider the tangent cone of $\Myset$ at $\theta_0$, which is given by
\begin{align}
  \Tan{\Myset}{\theta} & \defn \{u \in \real^\usedim \, \mid \,
  \mbox{there exists some $t > 0$ such that $ \theta + tu \in \Myset$}
  \big \}.
\end{align}
Note that $\Tan{\Myset}{\theta_0}$ contains the shifted set $\Myset -
\theta_0$.  Consequently, we have
\begin{align*}
  \UNIERR(\psi; \{0\}, \Myset - \theta_0, \epsilon) & \leq
  \Exs_{\theta=0} [\psi(\yvec)] + \sup_{ \theta \in
    \Tan{\Myset}{\theta_0} \backslash \NewBall_2(0 ; \epsilon) }
  \Exs_\theta[1 - \psi(\yvec)] \; = \; \UNIERR(\psi; \{0\},
  \Tan{\Myset}{\theta_0}, \epsilon),
\end{align*}
which shows that the tangent cone testing problem
\begin{align}
\label{EqnTangentConeTest}
\Hyp_0: \; \theta = 0 \quad \mbox{versus} \quad \Hyp_1: \; \theta \in
\Tan{\Myset}{\theta_0},
\end{align}
is more challenging than the original
problem~\eqref{EqnConvexSetTest}.  Thus, applying
Theorem~\ref{ThmGLRT} to this cone-testing
problem~\eqref{EqnTangentConeTest}, we obtain the following:
\bcors
\label{CorConvexSet}
For the convex set testing problem~\eqref{EqnConvexSetTest}, we have
\begin{align}
  \epsopt^2(\theta_0,\Myset;\rho) \lesssim \sigma^2 \min \Biggr\{ \Exs
  \ltwo{\Pi_{ \Tan{\Myset}{\theta_0}} g}, ~~ \Big(\frac{\Exs
    \ltwo{\Pi_{ \Tan{\Myset}{\theta_0}} g}} {\max \{ 0, ~\inf
    \limits_{\eta \in \Tan{\Myset}{\theta_0} \cap \Sphere{}}
    \inprod{\eta}{\Exs \Pi_{ \Tan{\Myset}{\theta_0}} g} \}} \Big)^2
  \Biggr \}.
\end{align}
This upper bound can be achieved by applying the GLRT to the tangent
cone testing problem~\eqref{EqnTangentConeTest}.  
\ecors

This corollary offers a general recipe of upper bounding the optimal
testing radius.  In Subsection~\ref{SecMonotoneCone}, we provide an
application of Corollary~\ref{CorConvexSet} to the problem of testing
\begin{align*}
  \Hyp_0 : \theta = \theta_0 ~~~\text{versus}~~~ \Hyp_1: \theta \in \Mon,
\end{align*}
where $\Mon$ is the monotone cone (defined in expression
\eqref{EqnMonotoneConeIntro}). When $\theta_0 \neq 0$, this is not a
cone testing problem, since the set $\{\theta_0\}$ is not a cone.
Using Corollary~\ref{CorConvexSet}, we prove an upper bound on the
optimal testing radius for this problem in terms of the number of
constant pieces of $\theta_0$.

% \vspace*{0.2in}

In the remainder of this section, we consider some special cases of
testing a cone $\Kcone$ versus $\{0\}$ in order to illustrate the
consequences of Theorem~\ref{ThmGLRT}.  In all cases, we compute the
GLRT testing radius for a constant error probability, and so ignore the
dependencies on $\rho$.  For this reason, we adopt the more
streamlined notation $\epsglrt(\Kcone)$ for the radius
$\epsglrt(\{0\}, \Kcone; \rho)$.

%%%%%%%%%%%%%%%%%%%%%%%%%%%%%%%%%%%%%%%%%%%%%%%%%%%%%%%%%%%%%%%%%%%%%%%%%%%%%

\subsubsection{Subspace of dimension $k$}
\label{SecSubspaceCone}

Let us begin with an especially simple case---namely, when $\Kcone$ is
equal to a subspace $\SubSpace{k}$ of dimension \mbox{$k \leq
  \usedim$}.  In this case, the projection $\ProjK$ is a linear
operator, which can be represented by matrix multiplication using a
rank $k$ projection matrix.  By symmetry of the Gaussian distribution,
we have $\Exs[\ProjK g] = 0$.  Moreover, by rotation invariance of the
Gaussian distribution, the random vector $\|\ProjK g\|_2^2$ follows a
$\chi^2$-distribution with $k$ degrees of freedom, whence
\begin{align*}
\frac{\sqrt{k}}{2} \; \leq \; \Exs \ltwo{\ProjK g} \; \leq \;
\sqrt{\Exs \ltwo{\ProjK g}^2} \; = \; \sqrt{k}.
\end{align*}
  Applying Theorem~\ref{ThmGLRT} then yields that the testing radius
  of the GLRT scales as
\begin{align} 
\label{EqnGLRTsubspace}
\epsglrt^2(\SubSpace{k}) & \asymp \sigma^2 \sqrt{k}.
\end{align}
Here our notation $\asymp$ denotes equality up to constants
independent of $(\sigma, k)$; we have omitted dependence on the
testing error $\rho$ so as to simplify notation, and will do so
throughout our discussion.

%%%%%%%%%%%%%%%%%%%%%%%%%%%%%%%%%%%%%%%%%%%%%%%%%%%%%%%%%%%%%%%%%%%%%%%%%%%%%%%

\subsubsection{Circular cone}
\label{SecCircularCone}

A circular cone in $\real^\usedim$ with constant angle $ \alpha \in
(0, \pi/2)$ is given by $ \Cir{\usedim}(\alpha) \defn \{ \theta \in
\real^\usedim \mid \theta_1 \geq \ltwo{\theta} \cos(\alpha) \}$.  In
geometric terms, it corresponds to the set of all vectors whose angle
with the standard basis vector $e_1 = (1, 0, \ldots, 0)$ is at most
$\alpha$ radians.  Figure~\ref{FigCone}(a) gives an illustration of a
circular cone.

\begin{figure}[H]
  \begin{center}
  \begin{tabular}{ccc}
    \widgraph{0.4\textwidth}{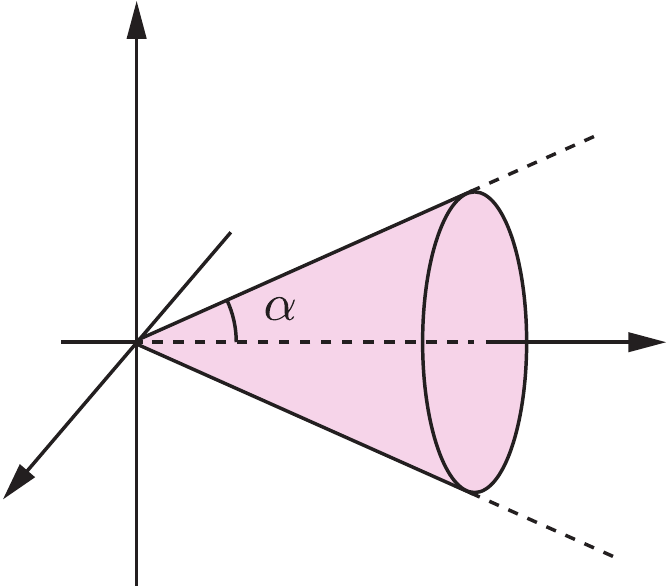} & & 
    \widgraph{0.38\textwidth}{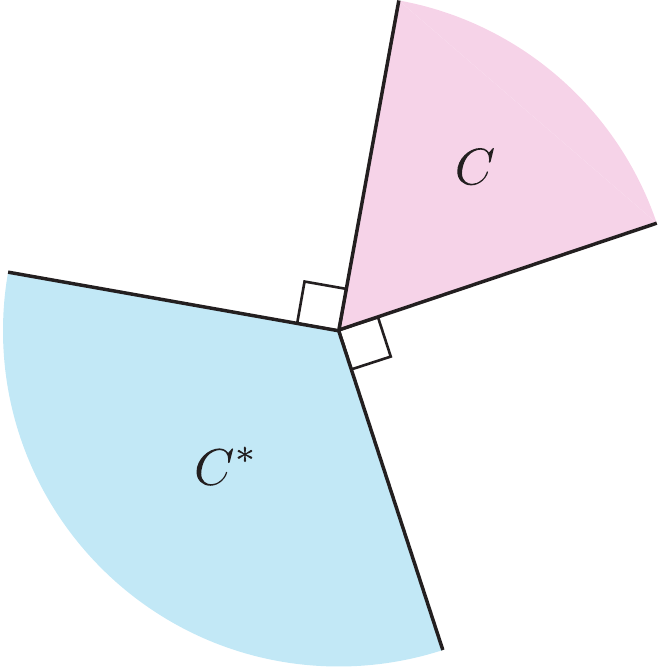} \\
    (a) & & (b)
  \end{tabular}
    \caption{(a) A $3$-dimensional circular cone with angle $\alpha$.
      (b) Illustration of a cone versus its polar cone.  }
    \label{FigCone}
  \end{center}
\end{figure}

 Suppose that we want to test the null hypothesis
$\theta = 0$ versus the cone alternative $\Kcone =
\Cir{\usedim}(\alpha)$.  We claim that, in application to this
particular cone, Theorem~\ref{ThmGLRT} implies that
\begin{align}
\label{EqnGLRTcircular} 
\epsglrt^2(\Kcone) \asymp \sigma^2 \min \big \{
\sqrt{\usedim}, 1 \big \} \; = \; \sigma^2,
\end{align}
where $\asymp$ denotes equality up to constants depending on $(\rho,
\alpha)$, but independent of all other problem parameters.

In order to apply Theorem~\ref{ThmGLRT}, we need to
evaluate both 
terms that define the geometric quantity $\EPSCRITSQ$.  On one hand,
by symmetry of the cone $\Kcone = \Cir{\usedim}(\alpha)$ in its last
$(\usedim-1)$-coordinates, we have $\Exs \ProjK g = \beta e_1$ for
some scalar $\beta > 0$ and $e_1$ denotes the standard Euclidean
basis vector with a $1$ in the first coordinate.  
Moreover, for any $\eta \in \Kcone \cap
\Sphere{}$, we have $\eta_1 \geq \cos(\alpha)$, and hence
\begin{align*}
  \inf_{\eta \in \Kcone \cap \Sphere{}} \inprod{\eta}{\Exs
    \ProjK g} = \eta_1\beta \geq \cos(\alpha) \beta \; = \; \cos(\alpha) \|\Exs
  \ProjK g\|_2.
\end{align*}
Next, we claim that $\ltwo{\Exs \ProjK g} \asymp \Exs \ltwo{\ProjK
  g}$.  In order to prove this claim, note that Jensen's inequality
yields
\begin{align}
\label{EqnCir-Sec}
  \Exs \ltwo{\ProjK g} \geq \ltwo{\Exs\ProjK g } 
  ~\stackrel{(a)}{\geq}~ (\Exs \Pi_{\Cir{\usedim}(\alpha)} g)_1
  = \Exs (\Pi_{\Cir{\usedim}(\alpha)} g)_1
  ~\stackrel{(b)}{\geq}~  \Exs \ltwo{\Pi_{\Cir{\usedim}(\alpha)} g}\cos(\alpha),
\end{align}
where in this argument, inequality (a) follows from simply fact that $\ltwo{x} \geq |x_1|$ whereas inequality (b) follows from the definition of circular cone.
Plugging into definition $\EPSCRITSQ$, the corresponding second term 
equals to a constant. 
Therefore, the second term in the definition~\eqref{EqnTightGLRT} of
$\EPSCRITSQ$ is upper bounded by a constant, independent of the
dimension $\usedim$.

On the other hand, from known results on circular cones~(see \S
6.3,~\cite{mccoy2014steiner}), there are constants $\kappa_j =
\kappa_j(\alpha)$ for $j=1,2$ such that $\kappa_1 \usedim ~\leq~ \Exs
\ltwo{\ProjK g}^2 ~\leq~ \kappa_2 \usedim$.  Moreover, we have
 \begin{align*}
      % \label{EqnCircSandwich}
  \Exs \ltwo{\ProjK g}^2 - 4 ~\stackrel{(a)}{\leq}~ (\Exs \ltwo{\ProjK
    g})^2 ~\stackrel{(b)}{\leq}~ \Exs \ltwo{\ProjK g}^2.
\end{align*}
 Here inequality (b) is an immediate consequence of Jensen's
 inequality, whereas inequality (a) follows from the fact that
 $\var(\ltwo{\ProjK g}) \leq 4$---see Lemma~\ref{LemConcentration} in
 Section~\ref{SecProofThmGLRT} and the surrounding discussion for
 details.  Putting together the pieces, we see that \mbox{$\Exs
   \ltwo{\ProjK g} \asymp \sqrt{\usedim}$} for the circular cone.
 Combining different elements of our argument leads to the stated
 claim~\eqref{EqnGLRTcircular}.

%%%%%%%%%%%%%%%%%%%%%%%%%%%%%%%%%%%%%%%%%%%%%%%%%%%%%%%%%%%%%%%%%%%%%%%%%%%%%%

\subsubsection{A Cartesian product cone}
\label{SecCartesianProductCone}

We now consider a simple extension of the previous two
examples---namely, a convex cone formed by taking the Cartesian
product of the real line $\real$ with the circular cone
$\Cir{\usedim-1}(\alpha)$---that is
\begin{align}
\label{EqnDefProdCone}
\Prc \defn \Cir{\usedim-1}(\alpha) \times \real.
\end{align}
Please refer to Figure~\ref{fig:procone} as an illustration of this
cone in three dimensions.
\begin{figure}[h]
  \centering
  \includegraphics[width=0.45\textwidth]{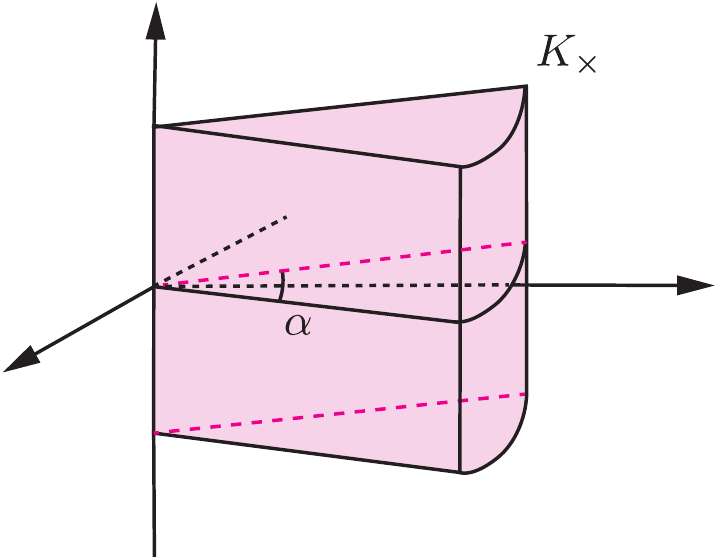}
  \caption{Illustration of the product cone defined in
    equation~\eqref{EqnDefProdCone}.}
  \label{fig:procone}
\end{figure}

This example turns out to be rather interesting because---as will be
demonstrated in Section~\ref{SecCartesianProductRevisit}---the GLRT is
sub-optimal by a factor $\sqrt{\usedim}$ for this cone.  In order to
set up this later analysis, here we use Theorem~\ref{ThmGLRT} to prove
that
\begin{align}
  \label{EqnDomGLRT}
  \epsglrt^2(\Prc) \asymp \sigma^2 \sqrt{\usedim}.
\end{align}
Note that this result is strongly suggestive of sub-optimality on the
part of the GLRT.  More concretely, the two cones that form $\Prc$ are
both ``easy'', in that the GLRT radius scales as $\sigma^2$ for each.
For this reason, one would expect that the squared radius of an
optimal test would scale as $\sigma^2$---as opposed to the $\sigma^2
\sqrt{\usedim}$ of the GLRT---and our later calculations will show
that this is indeed the case.

We now prove claim~\eqref{EqnDomGLRT} as a consequence of
Theorem~\ref{ThmGLRT}.  
First notice that projecting to the product cone $\Prc$ can be viewed as projecting the first $\usedim-1$ dimension to circular cone $\Cir{\usedim-1}(\alpha)$ and the last coordinate to $\real.$
Consequently, we have the following inequality
\begin{align*}
  \Exs \ltwo{\Pi_{\Cir{\usedim-1}(\alpha)} g} ~\leq~ \Exs
  \ltwo{\projPrc g} & \stackrel{(a)}{\leq}~ \sqrt{\Exs \ltwo{\projPrc
      g}^2} \\
& = \sqrt{\Exs \ltwo{\Pi_{\Cir{\usedim-1}(\alpha)} g}^2 + \Exs
       [g_d^2 ]}.
\end{align*}
where inequality (a) follows by Jensen's inequality.
Making use of our previous calculations for the circular cone, we have $\Exs \ltwo{\projPrc g} \asymp \sqrt{\usedim}.$
Moreover, note that the last coordinate of $\Exs [\projPrc g]$ is equal to $0$ by symmetry and the standard basis vector $e_\usedim
\in \real^\usedim$, with a single one in its last coordinate, belongs
to $\Prc \cap \Sphere{}$, we have
\begin{align*}
\inf_{\eta \in \Prc \cap \Sphere{}} \inprod{\eta}{\Exs \projPrc
  (g)} & \leq \inprod{e_\usedim}{\Exs \projPrc (g)} = 0.
\end{align*}
Plugging into definition $\EPSCRITSQ$, the corresponding second term
equals infinity. Therefore, the minimum that defines $\EPSCRITSQ$ is
achieved in the first term, and so is proportional to
$\sqrt{\usedim}$. Putting together the pieces yields the
claim~\eqref{EqnDomGLRT}.

%%%%%%%%%%%%%%%%%%%%%%%%%%%%%%%%%%%%%%%%%%%%%%%%%%%%%%%%%%%%%%%%%%%%%%%%%%%%%%%

\subsubsection{Non-negative orthant cone}
\label{SecOrthantCone}

Next let us consider the (non-negative) orthant cone given by $\Kpos
\defn \big \{ \theta \in \real^\usedim \mid \theta_j \geq 0 \quad
\mbox{for $j = 1, \ldots, \usedim$} \big \}$.  Here we use
Theorem~\ref{ThmGLRT} to show that
\begin{align}
\label{EqnGLRTkpos}
\epsglrt^2(\Kpos) \asymp \sigma^2 \sqrt{\usedim}.
\end{align}
Turning to the evaluation of the quantity $\EPSCRITSQ$, it is
straightforward to see that $[\Pi_{\Kpos}(\theta)]_j = \max \{0,
\theta_j\}$, and hence $\Exs \Pi_{\Kpos}(g) = \frac{1}{2} \Exs|g_1| \;
\ONES \; = \; \frac{1}{\sqrt{2 \pi}} \; \ONES$, where $\ONES \in
\real^d$ is a vector of all ones.  Thus, we have
\begin{align*}
\ltwo{ \Exs \Pi_{\Kpos}(g)} &= \sqrt{\frac{\usedim}{2 \pi}}\\
  ~\text{ and }~~\ltwo{\Exs \Pi_{\Kpos}(g)} ~\leq~ \Exs \ltwo{ \Pi_{\Kpos}(g)}
  &~\leq~ \sqrt{\Exs \ltwo{\Pi_{\Kpos} (g)}^2} = 
  \sqrt{\frac{d}{2}},
\end{align*}
where the second inequality follows from Jensen's inequality.  
So the first term in the definition of quantity
$\EPSCRITSQ$ is proportional to $\sqrt{\usedim}.$
As for the second term, since the standard basis vector $e_1 \in \Kpos \cap
\Sphere{}$, we have
\begin{align*}
\inf_{\eta \in \Kpos \cap \Sphere{}} \inprod{\eta}{\Exs\ProjK
  g} \leq \inprod{e_1}{\frac{1}{\sqrt{2\pi}} \; \ONES} =
\frac{1}{\sqrt{2\pi}}.
\end{align*}
Consequently, the second term in the definition of quantity
$\EPSCRITSQ$ lower bounded by a universal constant times $\usedim$.
Combining these derivations yields the stated
claim~\eqref{EqnGLRTkpos}.

%%%%%%%%%%%%%%%%%%%%%%%%%%%%%%%%%%%%%%%%%%%%%%%%%%%%%%%%%%%%%%%%%%%%%%%%%%%%%%%%%

\subsubsection{Monotone cone}
\label{SecMonotoneCone}

As our final example, consider testing in the monotone cone given by $\Mon \defn \big \{ \theta \in \real^\usedim \mid \theta_1 \leq \theta_2 \leq \cdots \leq \theta_\usedim \big \}.$
Testing with monotone cone constraint has also been studied in different settings before, where it is known in some cases that restricting to monotone cone helps reduce the hardness of the problem to be logarithmically dependent on the dimension (e.g., \cite{batu2004sublinear,wei2016sharp}).

Here we use Theorem~\ref{ThmGLRT} to show that
\begin{align} 
\label{EqnGLRTmonotone}
\epsglrt^2(\Mon) & \asymp \sigma^2 \sqrt{\log \usedim}.
\end{align}
From known results on monotone cone (see \S 3.5,~\cite{amelunxen2014living}), we know that $\Exs
\ltwo{\Pi_{\Mon} g} \asymp \sqrt{\log \usedim}$.  So the only
remaining detail is to control the second term defining
$\EPSCRITSQ$. We claim that the second term is actually infinity since 
\begin{align} \label{EqnMon-inprod}
  \max \{ 0, ~\inf \limits_{\eta \in
    \Mon \cap \Sphere{}} \inprod{\eta}{\Exs \Proj_\Mon g} \} = 0,
\end{align}
which can be seen by simply noticing vectors $\frac{1}{\sqrt{\usedim}}\ONES, -\frac{1}{\sqrt{\usedim}}\ONES \in \Mon\cap \Sphere{}$ and 
\begin{align*}
  \min \Big\{ \inprod{\frac{\ONES}{\sqrt{\usedim} }}{\Exs \Proj_\Mon g}, 
  ~~
  \inprod{ - \frac{\ONES}{\sqrt{\usedim}}}{\Exs \Proj_\Mon g} \Big\} \leq 0.
\end{align*}
Here $\ONES \in \real^\usedim$ denotes the vector of all ones. Combining the pieces yields the claim~\eqref{EqnGLRTmonotone}.

\paragraph{Testing constant versus monotone}
It is worth noting that the same GLRT bound also holds for the more general
problem of testing the monotone cone $\Mon$ versus the linear subspace
$\LinSpace = \myspan(\ONES)$ of constant vectors, namely:
\begin{align} 
\label{EqnGLRTmonotone_const}
\epsglrt^2(\LinSpace, \Mon) & \asymp \sigma^2 \sqrt{\log \usedim}.
\end{align}
In particular, the following lemma provides the control
that we need:
\begin{lems}
\label{LemMonInnProCal} 
For the monotone cone $\Mon$ and the linear space $\LinSpace =
\myspan(\ONES)$, there is a universal constant $c$ such that
\begin{align*}
\inf \limits_{\eta \in \Kcone \cap \Sphere{}}
\inprod{\eta}{\Exs \ProjK g} \leq c,
\qquad{\Kcone \defn \Mon \cap \LinSpacePerp}.
\end{align*}
\end{lems}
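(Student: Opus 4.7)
The plan is to display an explicit unit vector $\eta^\star \in \Kcone\cap\Sphere{}$ making the inner product $\inprod{\eta^\star}{\Exs\ProjK g}$ an absolute constant.

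First, since $\LinSpace=\myspan(\ONES)\subseteq\Mon$, the standard orthogonal decomposition for a cone containing a linear subspace gives $\Proj_\Mon(x)=\Proj_\LinSpace(x)+\ProjK(x)$ for every $x\in\real^\usedim$. Taking expectation under $g\sim\NORMAL(0,I_\usedim)$ and using $\Exs\Proj_\LinSpace(g)=\Exs[\bar g\,\ONES]=0$, we obtain $\mu\defn\Exs\ProjK g=\Exs\Proj_\Mon g$. I would then establish the antisymmetry $\mu_i=-\mu_{\usedim+1-i}$ by applying the orthogonal reflection $\phi(x)_i=-x_{\usedim+1-i}$, which preserves $\Mon$ and leaves the law of $g$ invariant; in particular $\mu_1=-\mu_\usedim$.

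For the test vector I would take $\eta^\star=(e_\usedim-e_1)/\sqrt{2}$, where $e_i$ denotes the $i$th standard basis vector. This $\eta^\star$ is non-decreasing, has zero sum, and unit norm, so $\eta^\star\in\Kcone\cap\Sphere{}$. A direct computation yields $\inprod{\eta^\star}{\mu}=(\mu_\usedim-\mu_1)/\sqrt{2}=\sqrt{2}\,\mu_\usedim$, so it only remains to bound $\mu_\usedim=\Exs(\Proj_\Mon g)_\usedim$ by an absolute constant.

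This final step is the main obstacle. Using the isotonic-regression identity $(\Proj_\Mon g)_\usedim=\max_{1\le k\le\usedim}\bar g_{[k,\usedim]}$, each average $\bar g_{[k,\usedim]}$ is Gaussian with variance $1/(\usedim-k+1)$, so a union bound combined with the Gaussian tail yields, for $t\geq 1$,
\[
\Prob\!\Big(\max_{1\le k\le\usedim}\bar g_{[k,\usedim]}>t\Big)\;\le\;\sum_{m=1}^{\usedim}e^{-mt^2/2}\;\le\;\frac{e^{-t^2/2}}{1-e^{-1/2}}.
\]
Integrating this tail bound and noting that $\mu_\usedim\ge 0$ (since $(\Proj_\Mon g)_\usedim\ge g_\usedim$), we deduce $\mu_\usedim\le c'$ for an absolute constant $c'$, and hence $\inf_{\eta\in\Kcone\cap\Sphere{}}\inprod{\eta}{\mu}\le\sqrt{2}\,c'$. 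What makes this step succeed, despite $\Mon$ having Gaussian width of order $\sqrt{\log\usedim}$, is that the variances $1/(\usedim-k+1)$ decay geometrically in $m=\usedim-k+1$, so the union bound collapses to a convergent geometric series rather than the $\sqrt{\log\usedim}$-type bound one might naively expect.
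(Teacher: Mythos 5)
Your proof is correct, and it takes a genuinely different and in some ways cleaner route than the paper. The paper tests against $\eta=\tfrac{1}{\sqrt{2}}(-1,1,0,\ldots,0)$, which forces it to control $\Exs(\xi_2-\xi_1)$ at an interior pair of coordinates; because the max-min isotonic formula has both a $\max$ and a $\min$ there, the bound requires casework followed by a dyadic blocking / chaining argument to control $\Exs\max_{v\ge j}|\bar g_{jv}|$. You instead test against $\eta^\star=(e_\usedim-e_1)/\sqrt{2}$ and exploit two structural facts the paper does not use: (i) the reflection symmetry $\mu_i=-\mu_{\usedim+1-i}$ of $\mu=\Exs\Proj_\Mon g$, which collapses $\mu_\usedim-\mu_1$ to $2\mu_\usedim$; and (ii) at the extremal coordinate $j=\usedim$ the isotonic formula degenerates to a pure maximum, $(\Proj_\Mon g)_\usedim=\max_{k}\bar g_{[k,\usedim]}$, with no inner $\min$. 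That second simplification is what lets a plain union bound succeed where the paper needed chaining, because the Gaussian tails $e^{-mt^2/2}$ sum geometrically in the block length $m$. Each step checks out: the Zarantonello/orthogonal decomposition $\Proj_\Mon=\Proj_\LinSpace+\ProjK$ applies because $\LinSpace$ is a subspace inside $\Mon$, so $\mu=\Exs\ProjK g=\Exs\Proj_\Mon g$; the reflection $\phi$ is orthogonal and maps $\Mon$ onto itself, giving the antisymmetry; $\eta^\star$ has unit norm, is non-decreasing, and sums to zero, so it lies in $\Kcone\cap\Sphere{}$; and the integration of the tail gives an absolute constant (the remark that $\mu_\usedim\ge 0$ is harmless but not needed, since $\Exs X\le\Exs X^+=\int_0^\infty\Prob(X>t)\,dt$ already suffices). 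The paper's dyadic argument is the more flexible, general-purpose tool, but for this specific lemma your symmetry-plus-endpoint reduction is shorter and more transparent about \emph{why} the answer is an absolute constant rather than $\sqrt{\log\usedim}$.
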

\noindent See Appendix~\ref{AppLemMonInnProCal} for the proof of this
lemma.

\paragraph*{Testing an arbitrary vector $\theta_0$ versus the monotone cone}

Finally, let us consider an important implication of
Corollary~\ref{CorConvexSet} in the context of testing departures in
monotone cone. More precisely, for a fixed vector $\theta_0 \in \Mon$,
consider the testing problem
\begin{align}
\label{TestKpiece-Mon}
\Hyp_0: \theta = \theta_0,~~~\text{versus}~~~ \Hyp_1: \theta \in \Mon,
\end{align}
Let us define $k(\theta_0)$ as the number of constant \emph{pieces} of
$\theta_0$, by which we mean there exist integers
$\usedim_1,\ldots,\usedim_{k(\theta_0)}$ with $\usedim_i \geq 1$ and
$\usedim_1 + \cdots + \usedim_{k(\theta_0)} = \usedim$ such that
$\theta_0$ is a constant on each set $S_i \defn \{j : \sum_{t=1}^{i-1}
\usedim_t + 1 \leq j \leq \sum_{t=1}^{i} \usedim_i\},$ for $1\leq i
\leq k(\theta_0)$.

We claim that Corollary~\ref{CorConvexSet} guarantees that the optimal
testing radius satisfies
\begin{align}
\label{EqnKpiece}
  \epsopt^2(\theta_0,\Mon;\rho) \, \lesssim \, \sigma^2
  \sqrt{k(\theta_0) \log \left(\frac{\usedim}{k(\theta_0)}\right)}.
\end{align}
Note that this upper bound depends on the structure of $\theta_0$
through how many pieces $\theta_0$ possesses, which reveals the
adaptive nature of Corollary~\ref{CorConvexSet}.

In order to prove inequality~\eqref{EqnKpiece}, let us use shorthand
$k$ to denote $k(\theta_0)$.  First notice that both
$\ONES/\sqrt{\usedim}, -\ONES/\sqrt{\usedim} \in
\Tan{\Mon}{\theta_0}$, then
\begin{align*}
  \max \{0, ~\inf \limits_{\eta \in \Tan{\Mon}{\theta_0} \cap
    \Sphere{}} \inprod{\eta}{\Exs \Pi_{ \Tan{\Mon}{\theta_0}}
    g} \} \leq 0,
\end{align*}
which implies the second term for $\EPSCRITSQ$ equals to infinity.  It
only remains to calculate $\Exs \ltwo{\Pi_{ \Tan{\Mon}{\theta_0}} g}$.
Since the tangent cone $\Tan{\Mon}{\theta_0}$ equals to the Cartesian
product of $k$ monotone cones, namely $\Tan{\Mon}{\theta_0} =
\Mon_{\usedim_1} \times \cdots \times \Mon_{\usedim_k}$, we have
\begin{align*}
  \Exs \ltwo{\Pi_{ \Tan{\Mon}{\theta_0}} g}^2 = \Exs
  \ltwo{\Pi_{\Mon_{\usedim_1}} g}^2 + \cdots + \Exs
  \ltwo{\Pi_{\Mon_{\usedim_k}} g}^2 & = \log(\usedim_1) + \cdots +
  \log (\usedim_k) \\ & \leq k\log \left(\frac{d}{k}\right),
\end{align*}
where the last step follows from convexity of the logarithm
function. Therefore Jensen's inequality guarantees that
\begin{align*}
\Exs \ltwo{\Pi_{ \Tan{\Mon}{\theta_0}} g} \leq \sqrt{\Exs
    \ltwo{\Pi_{ \Tan{\Mon}{\theta_0}} g}^2} \leq \sqrt{k \log
  \left(\frac{d}{k}\right)}.
\end{align*}
Putting the pieces together, Corollary~\ref{CorConvexSet} guarantees
that the claimed inequality~\eqref{EqnKpiece} holds for the testing
problem~\eqref{TestKpiece-Mon}.

%%%%%%%%%%%%%%%%%%%%%%%%%%%%%%%%%%%%%%%%%%%%%%%%%%%%%%%%%%%%%%%%%%%%%%%%%%%%%%%%%%

%%%%%%%%%%%%%%%%%%%%%%%%%%%%%%%%%%%%%%%%%%%%%%%%%%%%%%%%%%%%%%%%%%%%%%%%%%%%%%%%%%%%%

\subsection{Lower bounds on the testing radius}
\label{SecGeneralLower}

Thus far, we have derived sharp bounds for a particular
procedure---namely, the GLRT.  Of course, it is of interest to
understand when the GLRT is actually an optimal test, meaning that
there is no other test that can discriminate between the null and
alternative for smaller separations.  In this section, we use
information-theoretic methods to derive a lower bound on the optimal
testing radius $\epsopt$ for every pair of non-oblique and nested
closed convex cones $(\ConeSmall, \ConeBig)$. Similar to
Theorem~\ref{ThmGLRT}, this bound depends on the geometric structure of
intersection cone $\Kcone \defn \ConeBig \cap \ConeSmallPolar$, where
$\ConeSmallPolar$ is the polar cone to $\ConeSmall$.

In particular, let us define the quantity
\begin{align} 
\label{EqnRcri}
\EPSGENSQ & \defn \min \Biggr \{ \Exs \ltwo{\ProjK g }, ~~
\Big(\frac{\Exs \ltwo{\ProjK g}}{ \sup \limits_{\eta \in \Kcone \cap
    \Sphere{}} \inprod{\eta}{\Exs \ProjK g} } \Big)^2 \Biggr
\}.
\end{align}
Note that the only difference from $\EPSCRITSQ$ is the replacement of
the infimum over $\Kcone \cap \Sphere{}$ with a supremum, in
the denominator of the second term.  Moreover, since the supremum is
achieved at $\frac{\Exs \ProjK g}{\| \Exs \ProjK g\|_2}$, we have
$\sup_{\eta \in \Kcone \cap \Sphere{}} \inprod{\eta}{\Exs
  \ProjK g} = \| \Exs \ProjK g\|_2$.  Consequently, the second term on
the right-hand side of equation~\eqref{EqnRcri} can be also written in
the equivalent form $\Big(\frac{\Exs \ltwo{\ProjK g}}{ \| \Exs \ProjK
  g\|_2} \Big)^2$.

\noindent With this notation in hand, are now ready to state a general
lower bound for minimax optimal testing radius:
\begin{theos} 
  \label{ThmLBGen}
 There are numbers $\{ \infolowconst, \rho \in (0, 1/2]\}$ such that
for every nested pair of non-oblique closed convex cones $\ConeSmall
\subset \ConeBig$, we have
\begin{align}
\label{EqnConEps}
\inf_{\psi} \UNIERR(\psi; \ConeSmall, \ConeBig, \epsilon) & \geq \rho
\qquad \mbox{whenever $\epsilon^2 \leq \infolowconst \, \sigma^2 \,
  \EPSGENSQ$},
\end{align}
In particular, we can take $\kappa_\rho = 1/14$ for all $\rho \in
(0, 1/2]$.
\end{theos}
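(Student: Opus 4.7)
My plan is to combine Le Cam's two-point method with a prior on $\Kcone$ built from the projection map $\ProjK g$, and to exploit Gaussian concentration of Lipschitz functions in order to extract both geometric quantities appearing in $\EPSGENSQ$.

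\textbf{Step 1 (Reduction).} I would first reduce to the testing problem $\TEST{\{0\}}{\Kcone}{\epsilon}$. For any $\theta \in \ConeSmallPolar$, Moreau's decomposition gives $\PROJSMALL(\theta) = 0$, so $\min_{u \in \ConeSmall} \ltwo{\theta - u} = \ltwo{\theta}$. Combined with $0 \in \ConeSmall$, this yields the pointwise comparison $\UNIERR(\psi; \ConeSmall, \ConeBig, \epsilon) \geq \UNIERR(\psi; \{0\}, \Kcone, \epsilon)$ for every test $\psi$, so taking infima reduces the theorem to a lower bound for the reduced problem. \textbf{Step 2 (Le Cam $+$ $\chi^2$).} For any prior $\pi$ supported on $\{\theta \in \Kcone : \ltwo{\theta} \geq \epsilon\}$, Le Cam's inequality combined with $\tvnorm{P_0 - P_\pi}^2 \leq \tfrac14 \chi^2(P_\pi \| P_0)$ and the Gaussian identity
\begin{align*}
\chi^2(P_\pi \| P_0) + 1 = \Exs_{\theta, \theta' \sim \pi \otimes \pi}\bigl[\exp(\inprod{\theta}{\theta'}/\sigma^2)\bigr]
\end{align*}
reduces the task to controlling this second-moment integral and forcing it to stay below $1 + 4(1-\rho)^2$.

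\textbf{Step 3 (Prior and chi-squared bound).} Take $\theta = \tau \ProjK g$ for $g \sim N(0, I_\usedim)$ and a scale $\tau \asymp \epsilon/W$, where $W \defn \Exs \ltwo{\ProjK g}$. Because $\ltwo{\ProjK g}$ is a $1$-Lipschitz function of $g$, its variance is at most $1$, and a Paley-Zygmund argument applied to $\ltwo{\ProjK g}^2$ guarantees $\ltwo{\theta} \geq \epsilon$ with probability bounded below by a universal constant; truncating to this event therefore inflates the chi-squared only by a constant factor. To bound the chi-squared, set $\alpha = \tau^2/\sigma^2$ and $\mu \defn \Exs \ProjK g$, and use Borell-TIS on the $\ltwo{v}$-Lipschitz function $g \mapsto \inprod{\ProjK g}{v}$ to obtain
\begin{align*}
\Exs_g\bigl[\exp(\alpha \inprod{\ProjK g}{v})\bigr] \leq \exp\Bigl(\alpha \inprod{\mu}{v} + \tfrac{\alpha^2}{2} \ltwo{v}^2\Bigr).
\end{align*}
Substituting $v = \ProjK g'$ and applying Cauchy-Schwarz splits the outer expectation into (i) an MGF of the $\ltwo{\mu}$-Lipschitz linear functional $\inprod{\mu}{\ProjK g'}$ with mean $\ltwo{\mu}^2$, again bounded by Borell-TIS, and (ii) an MGF of $\ltwo{\ProjK g'}^2 \leq 2W^2 + 2(\ltwo{\ProjK g'} - W)^2$, where the deviation term is controlled through the sub-Gaussian tail of $\ltwo{\ProjK g'} - W$. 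The resulting bound has the form
\begin{align*}
\chi^2(P_\pi \| P_0) + 1 \leq C \exp\bigl(\alpha \ltwo{\mu}^2 + C' \alpha^2 W^2\bigr).
\end{align*}

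\textbf{Step 4 (Finishing).} The choice $\alpha \asymp \epsilon^2/(\sigma^2 W^2)$ converts the two exponents into $\epsilon^2 \ltwo{\mu}^2/(\sigma^2 W^2)$ and $\epsilon^4/(\sigma^4 W^2)$; requiring each to be a sufficiently small absolute constant is equivalent to $\epsilon^2 \lesssim \sigma^2 (W/\ltwo{\mu})^2$ and $\epsilon^2 \lesssim \sigma^2 W$, which are exactly the two branches of $\sigma^2 \EPSGENSQ$. Careful bookkeeping of the universal constants in the Borell-TIS and Cauchy-Schwarz steps, together with a fall-back point-mass prior at $\epsilon \mu/\ltwo{\mu}$---which lies in $\Kcone$ by convexity and trivially has norm $\epsilon$---used to cover the degenerate regime in which $W$ is near its universal lower bound $1/\sqrt{2\pi}$ from \eqref{EqnConstLower} (where Paley-Zygmund is weak), yields the explicit value $\kappa_\rho = 1/14$. \emph{The main technical obstacle} is simultaneously extracting both the Gaussian-width term $W$ and the ratio term $(W/\ltwo{\mu})^2$ from a single prior while maintaining the support requirement $\ltwo{\theta} \geq \epsilon$: balancing the truncation probability, the sub-exponential MGF of $\ltwo{\ProjK g}^2$, and the sub-Gaussian MGF of the linear-in-$\mu$ term is what pins down the final constant.
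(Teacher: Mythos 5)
Your proposal is correct and follows essentially the same route as the paper's proof: reduce to testing $\{0\}$ versus $\Kcone$, take a prior proportional to $\ProjK g$ truncated (or conditioned) so that its support lies in $\Kcone \setminus \NewBall_2(\epsilon)$, bound the testing error via Le Cam plus the $\chi^2$-divergence, and control the mixture MGF via Borell--TIS together with Cauchy--Schwarz, which produces exactly the two exponents $\epsilon^2 \ltwo{\mu}^2 / (\sigma^2 W^2)$ and $\epsilon^4/(\sigma^4 W^2)$ yielding the two branches of $\EPSGENSQ$. Your use of Paley--Zygmund (for the truncation probability) and the point-mass fall-back in direction $\mu/\ltwo{\mu}$ (for the small-width regime) are minor cosmetic variations on the paper's use of the Gaussian-concentration tail bound and a two-point argument at an arbitrary norm-$\epsilon$ vector, respectively; they change nothing substantive.
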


\paragraph*{Remarks} In more compact terms, Theorem~\ref{ThmLBGen} can be
understood as guaranteeing
\begin{align*}
   \epsopt(\ConeSmall, \ConeBig; \rho) \gtrsim \sigma \EPSGEN,
 \end{align*} 
where $\gtrsim$ denotes an inequality up to constants (with $\rho$
viewed as fixed).

Theorem~\ref{ThmLBGen} is proved by constructing a distribution over
the alternative $\Hyp_1$ supported only on those points in $\Hyp_1$
that are hard to distinguish from $\Hyp_0$.  Based on this
construction, the testing error can be lower bound by controlling the
total variation distance between two marginal likelihood functions.
We refer our readers to our Section~\ref{SecProofThmLBGen} for more
details on this proof.

\vspace*{.1in}

One useful consequence of Theorem~\ref{ThmLBGen} is in providing a
sufficient condition for optimality of the GLRT, which we summarize
here:

\bcors[Sufficient condition for optimality of GLRT]
\label{CorSufficientGLRT}
Given the cone $\Kcone = \ConeBig \cap \ConeSmallPolar$, suppose that
there is a numerical constant $b > 1$, independent of $\Kcone$ and all
other problem parameters, such that
\begin{align}
  \label{EqnSufficientGLRT}
\sup \limits_{\eta \in \Kcone \cap \Sphere{}}
\inprod{\eta}{\Exs \ProjK g} \; = \; \| \Exs \ProjK g\|_2 & \; \leq b
\; \inf \limits_{\eta \in \Kcone \cap
  \Sphere{}} \inprod{\eta}{\Exs \ProjK g}.
\end{align}
Then the GLRT is a minimax optimal test---that is,
$\epsglrt(\ConeSmall, \ConeBig; \rho) \asymp \epsopt(\ConeSmall,
\ConeBig; \rho)$.
\ecors
It is natural to wonder whether the
condition~\eqref{EqnSufficientGLRT} is also necessary for optimality
of the GLRT.  This turns out not to be the case.  The monotone cone,
to be revisited in Section~\ref{SecMonotoneRevisit}, provides an
instance of a cone testing problem for which the GLRT is optimal while
condition~\eqref{EqnSufficientGLRT} is violated.  Let us now return to
these concrete examples.

%%%%%%%%%%%%%%%%%%%%%%%%%%%%%%%%%%%%%%%%%%%%%%%%%%%%%%%%%%%%%%%%%%%%%%%%%%%%%%%

\subsubsection{Revisiting the $k$-dimensional subspace}
\label{SecSubspaceRevisit}

Let $S_k$ be a subspace of dimension $k \leq \usedim$.  In our earlier
discussion in Section~\ref{SecSubspaceCone}, we established that
$\epsglrt^2(S_k) \asymp \sigma^2 \sqrt{k}$.  Let us use
Corollary~\ref{CorSufficientGLRT} to verify that the GLRT is optimal
for this problem.  For a $k$-dimensional subspace $\Kcone = S_k$, we
have $\Exs \ProjK g = 0$ by symmetry; consequently,
condition~\eqref{EqnSufficientGLRT} holds in a trivial manner.  Thus,
we conclude that \mbox{$\epsopt^2(S_k) \asymp \epsglrt^2(S_k)$,}
showing that the GLRT is optimal over all tests.

%%%%%%%%%%%%%%%%%%%%%%%%%%%%%%%%%%%%%%%%%%%%%%%%%%%%%%%%%%%%%%%%%%%%%%%%%%%%%%%%c
\subsubsection{Revisiting the circular cone}
\label{SecCircularRevisit}

Recall the circular cone $\Kcone = \{ \theta \in \real^d \mid \,
\theta_1 \geq \|\theta\|_2 \cos(\alpha) \}$ for fixed $0 < \alpha < \pi/2.$
In our earlier
discussion, we proved that \mbox{$\epsglrt^2(\Kcone) \asymp
  \sigma^2$.}  Here let us verify that this scaling is optimal over
all tests.  By symmetry, we find that \mbox{$\Exs \ProjK g = \beta e_1
  \in \real^\usedim$,} where $e_1$ denotes the standard Euclidean
basis vector with a $1$ in the first coordinate, and $\beta > 0$ is
some scalar.  For any vector $\eta \in \Kcone \cap \Sphere{}$,
we have $\eta_1 \geq \cos(\alpha)$, and hence
\begin{align*}
\inf \limits_{\eta \in \Kcone \cap \Sphere{}}
\inprod{\eta}{\Exs \ProjK g} & \geq \cos(\alpha) \beta \; = \;
\cos(\alpha) \|\Exs \ProjK g\|_2.
\end{align*}
Consequently, we see that condition~\eqref{EqnSufficientGLRT} is
satisfied with $b = \frac{1}{\cos(\alpha)} > 0$, so that the GLRT is
optimal over all tests for each fixed $\alpha$.  (To be clear, in this
example, our theory does not provide a sharp bound uniformly over
varying $\alpha$.)

%%%%%%%%%%%%%%%%%%%%%%%%%%%%%%%%%%%%%%%%%%%%%%%%%%%%%%%%%%%%%%%%%%%%%%%%%%%%%%

\subsubsection{Revisiting the product cone}
\label{SecCartesianProductRevisit}

Recall from Section~\ref{SecCartesianProductCone} our discussion of
the Cartesian product cone $\Prc = \Cir{\usedim-1}(\alpha) \times
\real$.  
In this section, we establish that the GLRT, when
applied to a testing problem based on this case, is sub-optimal by a
factor of $\sqrt{\usedim}$.

Let us first prove that the sufficient
condition~\eqref{EqnSufficientGLRT} is violated, so that
Corollary~\ref{CorSufficientGLRT} does \emph{not} imply optimality of
the GLRT.  From our earlier calculations, we know that $\Exs
\ltwo{\projPrc g} \asymp \sqrt{\usedim}$.  On the other hand, we also
know that $\Exs \projPrc g$ is equal to zero in its last coordinate.
Since the standard basis vector $e_d$ belongs to the set $\Prc \cap
\Sphere{}$, we have
\begin{align*}
  \inf_{\eta \in \Prc \cap \Sphere{}} \inprod{\eta}{\Exs
    \Pi_{\Prc} g} & \leq \inprod{e_d}{\Exs \Pi_{\Prc} g} \; = \; 0,
\end{align*}
so that condition~\eqref{EqnSufficientGLRT} does not hold.

From this calculation alone, we cannot conclude that the GLRT is
sub-optimal.  So let us now compute the lower bound guaranteed by
Theorem~\ref{ThmLBGen}.  From our previous discussion, we know that
$\Exs \projPrc g = \beta e_1$ for some scalar $\beta >0$.  Moreover,
we also have $\ltwo{\Exs \projPrc g} = \beta \asymp \sqrt{\usedim}$;
this scaling follows because we have $\ltwo{\Exs \projPrc g} =
\ltwo{\Exs \Pi_{\Cir{\usedim-1}(\alpha)} g} \asymp \sqrt{\usedim-1}$,
where we have used the previous inequality~\eqref{EqnCir-Sec} for
circular cone.  Putting together the pieces, we find that
Theorem~\ref{ThmLBGen} implies that
\begin{align}
  \label{EqnProductLB}
\epsopt^2(\Prc) \gtrsim \sigma^2,
\end{align}
which differs from the GLRT scaling in a factor of $\sqrt{\usedim}$.

Does there exist a test that achieves the lower
bound~\eqref{EqnProductLB}?  It turns out that a simple truncation
test does so, and hence is optimal.  To provide intuition for the
test, observe that for any vector $\theta \in \Prc \cap \Sphere{}$, we
have $\theta_1^2 + \theta_d^2 \geq \cos^2(\alpha)$.  To verify this
claim, note that
\begin{align*}
\frac{1}{\cos^2(\alpha)} \Big (\theta_1^2 + \theta_\usedim^2 \Big) &
\geq \frac{\theta_1^2}{\cos^2(\alpha)} + \theta_\usedim^2 \; \geq \;
\sum_{j=1}^{d-1} \theta_j^2 + \theta_\usedim^2 \; = \;
1.
\end{align*}
Consequently, the two coordinates $(y_1, y_\usedim)$ provide
sufficient information for constructing a good test.  In particular,
consider the truncation test
\begin{align*}
  \varphi(y) & \defn \Ind \big[ \ltwo{(y_1, y_d)} \geq \beta \big],
\end{align*}
for some threshold $\beta > 0$ to be determined.  This can be viewed
as a GLRT for testing the standard null against the alternative
$\real^2$, and hence our general theory guarantees that it will
succeed with separation $\epsilon^2 \gtrsim \sigma^2$.  This guarantee
matches our lower bound~\eqref{EqnProductLB}, showing that the
truncation test is indeed optimal, and moreover, that the GLRT is
sub-optimal by a factor of $\sqrt{\usedim}$ for this particular problem.

We provide more intuition on why the the GLRT sub-optimal and use this intuition to construct a more general class of
problems for which a similar sub-optimality is witnessed in Appendix~\ref{AppGLRTsub}.

%%%%%%%%%%%%%%%%%%%%%%%%%%%%%%%%%%%%%%%%%%%%%%%%%%%%%%%%%%%%%%%%%%%%%%%%%%

%%%%%%%%%%%%%%%%%%%%%%%%%%%%%%%%%%%%%%%%%%%%%%%%%%%%%%%%%%%%%%%%%%%%%%%%%%%%%%%%%%%

\subsection{Detailed analysis of two cases}
\label{SecDetail}

This section is devoted to a detailed analysis of the orthant cone,
followed by the monotone cone.  Here we find that the GLRT is again
optimal for both of these cones, but establishing this optimality
requires a more delicate analysis.

%%%%%%%%%%%%%%%%%%%%%%%%%%%%%%%%%%%%%%%%%%%%%%%%%%%%%%%%%%%%%%%%%%%%%%%%%%%%%%%%%%%%%%%

\subsubsection{Revisiting the orthant cone}
\label{SecOrthantRevisit}

Recall from Section~\ref{SecOrthantCone} our discussion of the
(non-negative) orthant cone
\begin{align*}
    \Kpos \defn \{\theta \in \real^\usedim \mid \theta_j \geq 0 \quad
    \mbox{for $j = 1, \ldots, \usedim$} \},
\end{align*}
where we proved that $\epsglrt^2(\Kpos) \asymp \sigma^2
\sqrt{\usedim}$. Let us first show that the sufficient condition
\eqref{EqnSufficientGLRT} does not hold, so that
Corollary~\ref{CorSufficientGLRT} does \emph{not} imply optimality of
the GLRT.  As we have computed in our Section~\ref{SecOrthantCone}, 
quantity $\Exs\ltwo{\Pi_{\Kpos} (g) } \asymp \sqrt{\usedim}$ and 
\begin{align*}
  \inf_{\eta \in \Kpos \cap \Sphere{}} \inprod{\eta}{\Exs\ProjK
  g} \leq \inprod{e_1}{\frac{1}{\sqrt{2\pi}} \; \ONES} =
\frac{1}{\sqrt{2\pi}},
\end{align*}
where use the fact that $\Exs \Pi_{\Kpos}(g)  = \; \frac{1}{\sqrt{2 \pi}} \; \ONES$.
So that condition~\eqref{EqnSufficientGLRT} is violated.

Does this mean the GLRT is sub-optimal? It turns out that the GLRT is
actually optimal over all tests, as we can demonstrate by proving a
lower bound---tighter than the one given in Theorem~\ref{ThmLBGen}---that
matches the performance of the GLRT.  We summarize it as follows:
\begin{props} 
\label{PropKpos} 
There are numbers $\{ \infolowconst, \rho \in (0, 1/2]\}$ such that
for the (non-negative) orthant cone $\Kpos$, we have
\begin{align}
\label{EqnConEpsKpos}
\inf_{\psi} \UNIERR(\psi; \{0\}, \Kpos, \epsilon) & \geq \rho
\qquad{\text{ whenever }\epsilon^2 \leq \infolowconst \, \sigma^2
  \sqrt{\usedim}.}
\end{align}
\end{props}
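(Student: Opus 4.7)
The general lower bound of Theorem~\ref{ThmLBGen} is not tight for the orthant cone: as computed in Section~\ref{SecOrthantCone}, one has $\Exs \ltwo{\Pi_{\Kpos} g} \asymp \sqrt{\usedim}$ while $\ltwo{\Exs \Pi_{\Kpos} g} = \sqrt{\usedim/(2\pi)}$, so the second term in $\EPSGENSQ$ is only of constant order and Theorem~\ref{ThmLBGen} yields merely $\epsopt^2(\Kpos) \gtrsim \sigma^2$. To close the $\sqrt{\usedim}$ gap, the plan is to invoke Le Cam's method with a prior $\pi$ on the alternative that is specifically engineered to defeat not only the GLRT but also any simple linear test such as the one based on $\inprod{\ONES}{y}$.

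\textbf{Prior construction.} Pick $k = \lfloor c_1 \sqrt{\usedim} \rfloor$ for a small absolute constant $c_1 > 0$ to be tuned, set $\mu^2 = 2\epsilon^2/k$, and let $\pi$ be the uniform distribution over the collection of $\binom{\usedim}{k}$ vectors of the form $\theta^{(S)} \defn \mu \sum_{i \in S} e_i$ where $S \subset [\usedim]$ ranges over all $k$-subsets. By construction each $\theta^{(S)}$ lies in $\Kpos$ with $\ltwo{\theta^{(S)}}^2 = 2\epsilon^2$, so $\pi$ is supported on the alternative $\Kpos \setminus \NewBall_2(\{0\}; \epsilon)$. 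The design principle is that the nonzero entries of magnitude $\mu = \epsilon\sqrt{2/k}$ are spread across $k \asymp \sqrt{\usedim}$ coordinates, so that the linear statistic $\inprod{\ONES}{y}/(\sigma\sqrt{\usedim})$ has signal-to-noise ratio $\mu k / (\sigma \sqrt{\usedim}) \asymp 1$ at the critical scale $\epsilon^2 \asymp \sigma^2 \sqrt{\usedim}$, making it powerless.

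\textbf{Chi-squared computation.} Write $P_0 = \NORMAL(0, \sigma^2 \EYE{\usedim})$ and let $P_\pi$ denote the mixture induced by $\pi$. By the standard Gaussian identity for chi-squared divergences of mean mixtures,
\begin{align*}
1 + \chi^2(P_\pi, P_0) \; = \; \Exs_{S, S'} \exp\!\Big(\tfrac{\inprod{\theta^{(S)}}{\theta^{(S')}}}{\sigma^2}\Big) \; = \; \Exs \exp\!\Big(\tfrac{\mu^2 |S \cap S'|}{\sigma^2}\Big),
\end{align*}
where $S, S'$ are independent uniform $k$-subsets, so $|S \cap S'|$ is hypergeometric with parameters $(\usedim, k, k)$. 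Hoeffding's classical convex-order inequality comparing hypergeometric to binomial $\mathrm{Bin}(k, k/\usedim)$ then yields
\begin{align*}
\Exs \exp\!\big(\lambda\, |S \cap S'|\big) \; \leq \; \big(1 - \tfrac{k}{\usedim} + \tfrac{k}{\usedim} e^\lambda\big)^k \; \leq \; \exp\!\Big(\tfrac{k^2}{\usedim} (e^\lambda - 1)\Big).
\end{align*}
Substituting $\lambda = \mu^2/\sigma^2 = 2\epsilon^2/(k \sigma^2)$ and using the hypothesis $\epsilon^2 \leq \infolowconst \sigma^2 \sqrt{\usedim}$ together with $k \asymp c_1 \sqrt{\usedim}$, one obtains $\lambda \leq 2\infolowconst/c_1$ and $k^2/\usedim \leq c_1^2$, so $\chi^2(P_\pi, P_0)$ can be made as small as desired by first fixing $c_1 = \Theta(1)$ and then taking $\infolowconst$ sufficiently small (depending on $\rho$).

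\textbf{Conclusion and main obstacle.} Combining the standard bound $\tvnorm{P_\pi - P_0} \leq \tfrac{1}{2}\sqrt{\chi^2(P_\pi, P_0)}$ with the testing inequality $\inf_\psi \UNIERR(\psi; \{0\}, \Kpos, \epsilon) \geq 1 - \tvnorm{P_\pi - P_0}$, valid because $\pi$ is supported on the alternative, completes the argument: for $\infolowconst$ small enough depending on $\rho$, the right-hand side exceeds $\rho$. The key technical obstacle is the hypergeometric MGF bound, and Hoeffding's convex-order inequality dispatches it cleanly; an alternative route would use a Bernoulli product prior with $p = k/\usedim$ and then condition on the norm-$\epsilon$ level set, at the cost of tracking the post-truncation effect on $\chi^2$.
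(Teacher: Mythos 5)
Your proposal is correct and is, at its core, the same argument as the paper's: the paper's proof of Proposition~\ref{PropKpos} also puts a uniform prior on $s$-sparse vectors with $s = \lfloor \sqrt{\usedim} \rfloor$ equal nonnegative entries (see the measure $\qprob$ in equation~\eqref{EqnKposMeasure}), scales them to lie just outside the $\epsilon$-ball, and lower bounds the testing error through the same $\chi^2$-mixture/Le Cam computation, packaged there as Lemma~\ref{LemChisquareBound}, so that everything reduces to controlling $\Exs \exp(\lambda |S \cap S'|)$ for the hypergeometric overlap. Where you genuinely diverge is in how that moment generating function is bounded: the paper expands it as the series $\sum_i \SPEC_i z^i / i!$ and controls $\SPEC_0$ and the ratios $\SPEC_i/\SPEC_{i-1}$ by hand (inequalities~\eqref{EqnNonNe-Ai}--\eqref{EqnHfun2}), whereas you invoke Hoeffding's convex-order domination of the hypergeometric by $\mathrm{Bin}(k, k/\usedim)$, giving $\Exs \exp(\lambda|S\cap S'|) \leq \exp\bigl(\tfrac{k^2}{\usedim}(e^\lambda - 1)\bigr)$ in one line. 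That substitution is valid and noticeably cleaner; what the paper's explicit computation buys in exchange is self-containedness and the concrete numeric constants it tracks (hence its split into $\usedim < 81$ and $\usedim \geq 81$). One small thing to tidy in your write-up: you need $k = \lfloor c_1 \sqrt{\usedim} \rfloor \geq 1$, so either take $c_1 = 1$ (which still makes $k^2/\usedim \leq 1$ and $\lambda \lesssim \infolowconst$ work uniformly in $\usedim$) or dispose of small $\usedim$ by the simple two-point argument, exactly as the paper does in its Case 1; as stated, a ``small'' $c_1$ leaves the prior empty in low dimensions. The motivating remark about the linear statistic $\inprod{\ONES}{y}$ is fine but plays no role in the proof.
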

\noindent See the Section~\ref{sec:proof_prop_kpos} for the proof of this
proposition.

\noindent From Proposition~\ref{PropKpos}, we see that the optimal
testing radius satisfies $\epsopt^2(\Kpos) \gtrsim \sigma^2
\sqrt{\usedim}$.  Compared to the GLRT radius $\epsglrt^2(\Kpos)$
established in expression~\eqref{EqnGLRTkpos}, it implies the optimality of the GLRT.

%%%%%%%%%%%%%%%%%%%%%%%%%%%%%%%%%%%%%%%%%%%%%%%%%%%%%%%%%%%%%%%%%%%%%%%%%%%%%%%

\subsubsection{Revisiting the monotone cone} 
\label{SecMonotoneRevisit}

Recall the monotone cone given by \mbox{$\Mon \defn \{\theta \in
  \real^\usedim \mid \theta_1 \leq \theta_2 \leq \cdots \leq
  \theta_{\usedim} \}$.}  In our previous discussion in
Section~\ref{SecMonotoneCone}, we established that $\epsglrt^2(\Mon)
\asymp \sigma^2 \sqrt{\log \usedim}$.  We also pointed out that this
scaling holds for a more general problem, namely, testing cone $\Mon$
versus linear subspace $\LinSpace = \myspan(\ONES)$.  In this section,
we show that the GLRT is also optimal for both cases.

First, observe that Corollary~\ref{CorSufficientGLRT} does not imply
optimality of the GLRT.  In particular, using symmetry of the inner
product, we have shown in expression~\eqref{EqnMon-inprod} that  
\begin{align*}
  \max \{ 0, \inf \limits_{\eta \in \Mon \cap \Sphere{}}
  \inprod{\eta}{\Exs \Proj_\Mon g} \}= 0,
\end{align*}
for cone pair $(\ConeSmall, \ConeBig) = (\{0\}, \Mon)$. 
Also note that from Lemma~\ref{LemMonInnProCal} we know that for cone pair $(\ConeSmall, \ConeBig) = (\myspan(\ONES), \Mon)$, there is a universal constant $c$ such that
\begin{align*}
\inf \limits_{\eta \in \Kcone \cap \Sphere{}}
\inprod{\eta}{\Exs \ProjK g} \leq c,
\qquad{\Kcone \defn \Mon \cap \LinSpacePerp}.
\end{align*}
In both cases, since $\Exs \ltwo{\Pi_{\Kcone} g} \asymp \sqrt{\log \usedim},$ 
so that the sufficient condition~\eqref{EqnSufficientGLRT} for GLRT optimality fails to hold.

It turns out that we can demonstrate a matching lower bound for
$\epsopt^2(\Mon)$ in a more direct way by carefully constructing a
prior distribution on the alternatives and control the testing error.
Doing so allows us to conclude that the GLRT is optimal, and we summarize
our conclusions in the following:

\begin{props} 
\label{PropMon}
There are numbers $\{ \infolowconst, \rho \in (0, 1/2]\}$ such that
  for the monotone cone $\Mon$ and subspace $\LinSpace = \{0\}$ or
  $\myspan(\ONES)$, we have
\begin{align}
\label{EqnConEpsMon}
\inf_{\psi} \UNIERR(\psi; \LinSpace, \Mon, \epsilon) & \geq \rho
\qquad{\text{ whenever }\epsilon^2 \leq \infolowconst \, \sigma^2
  \sqrt{\log (e \usedim)}}.
\end{align}
\end{props}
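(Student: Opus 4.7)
}
The plan is to derive the lower bound via a classical prior-based argument in the style of Le Cam and Ingster. Namely, I will construct a probability measure $\pi$ supported on $\Mon \setminus \NewBall_2(\LinSpace; \epsilon)$, compute (or bound) the $\chi^2$-divergence between the mixture $\bar{P}_\pi \defn \int P_\theta\, d\pi(\theta)$ and the null law $P_0 = N(0,\sigma^2 \EYE{\usedim})$, and then invoke the standard inequality \mbox{$\inf_\psi \UNIERR(\psi; \LinSpace, \Mon, \epsilon) \geq 1 - \tvnorm{\bar P_\pi - P_0} \geq 1 - \tfrac{1}{2}\sqrt{\chi^2(\bar P_\pi, P_0)}$.} Consequently, as soon as $\chi^2(\bar P_\pi, P_0) \leq (2(1-\rho))^2$, the uniform risk of any test is at least $\rho$.

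The first step is a reduction. Since $(\LinSpace, \Mon)$ is non-oblique, the equivalence~\eqref{EqnSimpleEquiv} reduces both cases ($\LinSpace = \{0\}$ and $\LinSpace = \myspan(\ONES)$) to lower bounding the testing radius for $\TEST{\{0\}}{\Kcone}{\epsilon}$ where $\Kcone \defn \Mon \cap \LinSpace^\perp$. The prior will therefore be constructed on $\Kcone$. The natural family of alternatives are the unit-norm step vectors $v^{(j)} \in \Kcone \cap \Sphere{}$ defined, for each $j \in \{1,\dots,\usedim-1\}$, by
\begin{align*}
v^{(j)}_i \; \defn \;
\begin{cases}
-\sqrt{(\usedim-j)/(j\usedim)} & \text{if } i \leq j,\\
\phantom{-}\sqrt{j/((\usedim-j)\usedim)} & \text{if } i > j.
\end{cases}
\end{align*}
By construction each $v^{(j)}$ is monotone, orthogonal to $\ONES$ and of unit norm, so that $v^{(j)} \in \Kcone \cap \Sphere{}$ in both cases of interest. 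Set $\theta^{(j)} \defn \epsilon\, v^{(j)}$; then $\|\theta^{(j)} - \Pi_\LinSpace \theta^{(j)}\|_2 = \epsilon$, placing $\theta^{(j)}$ on the boundary of $\NewBall_2(\LinSpace; \epsilon)$, and we may use these (or a slightly rescaled version) as the support of $\pi$. Choosing $\pi$ to be uniform on $\{\theta^{(j)} : j \in J\}$ for an index set $J$ to be selected, the standard Gaussian $\chi^2$ identity yields
\begin{align*}
\chi^2(\bar P_\pi, P_0) + 1 \;=\; \frac{1}{|J|^2}\sum_{j, j' \in J} \exp\Big(\frac{\epsilon^2}{\sigma^2}\inprod{v^{(j)}}{v^{(j')}}\Big).
\end{align*}
A direct block-wise calculation gives $\inprod{v^{(j)}}{v^{(j')}} = \sqrt{(j\wedge j')(\usedim - j\vee j')/\big((j\vee j')(\usedim - j\wedge j')\big)}$, which equals $1$ on the diagonal and decays like $\sqrt{(j\wedge j')/(j\vee j')}$ away from it.

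The main step is to choose $J$ so as to bound this sum by $(2(1-\rho))^2 + 1$ for all $\epsilon^2 \leq \kappa_\rho \sigma^2 \sqrt{\log(e\usedim)}$. The diagonal contribution is $|J|^{-1}\exp(\epsilon^2/\sigma^2)$, so any valid choice must satisfy $|J| \gtrsim \exp(\kappa_\rho \sqrt{\log\usedim})$; taking $J = \{1,\dots,\usedim-1\}$ (or a polynomially large geometrically-spaced subfamily of $\{1,\dots,\usedim-1\}$) is comfortably large enough. Controlling the off-diagonal contribution requires showing that, under the uniform prior over $J$, the random variable $R \defn \inprod{v^{(j)}}{v^{(j')}}$ concentrates sufficiently near zero that $\Exs[\exp(\tfrac{\epsilon^2}{\sigma^2}R)] = 1 + O(1)$ when $\epsilon^2/\sigma^2 \lesssim \sqrt{\log\usedim}$. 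Using the closed form above, the distribution of $R$ is explicit and can be controlled by a Taylor expansion $\exp(tR) \leq 1 + tR + t^2 R^2 \exp(|t|)/2$ combined with direct moment bounds $\Exs R, \Exs R^2 = O(1/\log\usedim)$ obtained by summing a double logarithmic series; the factor $\exp(|t|) = \exp(\epsilon^2/\sigma^2)$ is absorbed by choosing $|J|$ to be a large enough polynomial in $\usedim$.

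The main obstacle is precisely this chi-squared computation: the bound must simultaneously tolerate the diagonal term's exponential growth in $\epsilon^2/\sigma^2$ (which forces $|J|$ to be at least $\exp(\kappa_\rho\sqrt{\log \usedim})$) and the off-diagonal correlations that appear because nearby step vectors are highly correlated. Balancing these requires the careful spacing / moment computations sketched above; all other steps (the reduction to $\Kcone$, the standard $\chi^2$-to-TV inequality, and the conversion of TV bounds to testing lower bounds) are routine. Putting the pieces together yields a constant $\infolowconst > 0$ such that whenever $\epsilon^2 \leq \infolowconst\,\sigma^2 \sqrt{\log(e\usedim)}$ the testing error is at least $\rho$, establishing~\eqref{EqnConEpsMon}.
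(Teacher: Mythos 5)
Your high-level architecture---reduce to $\Kcone = \Mon \cap \LinSpacePerp$, put a prior on $\Kcone\cap\Sphere{}$, bound the testing error via the $\chi^2$-to-TV inequality (the paper's Lemma~\ref{LemChisquareBound})---is exactly the one the paper uses. But the specific prior you propose, the one-parameter family of step vectors $\{v^{(j)}\}_{j=1}^{\usedim-1}$, cannot be made to close the argument, and the moment bounds you cite are false for the regime you need.

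Here is the tension made precise. Writing $t = \epsilon^2/\sigma^2 \asymp \sqrt{\log\usedim}$, the diagonal contribution to $\chi^2 + 1$ is $e^t/|J|$, so you need $|J| \gtrsim e^t = e^{\kappa\sqrt{\log\usedim}}$. Now consider the two choices you offer. (i) With $J = \{1,\dots,\usedim-1\}$ and $j, j'$ uniform, the inner product $R = \sqrt{(j\wedge j')(\usedim-j\vee j')/((j\vee j')(\usedim-j\wedge j'))}$ satisfies $\Exs R^2 = \Theta(1)$, not $O(1/\log\usedim)$: passing to continuum variables $x = j/\usedim, y = j'/\usedim$ and integrating $x(1-y)/(y(1-x))$ over $0<x<y<1$ gives $2(\pi^2/6 - 3/2) \approx 0.29$. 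So even $\Exs\exp(tR) \geq \exp(t\Exs R)$ already diverges. (ii) With a geometrically spaced $J$ of constant ratio $q>1$ and $|J| = \Theta(\log\usedim)$, your claimed moment bounds hold, but the diagonal term $e^t/|J| = e^{\kappa\sqrt{\log\usedim}}/\Theta(\log\usedim) \to \infty$. There is no middle ground, because $J \subseteq \{1,\dots,\usedim-1\}$ forces the spacing ratio down to $1 + O(\log\usedim / |J|)$ once $|J| \gg \log\usedim$; then the nearest-neighbor inner products approach $1$ and essentially every off-diagonal term contributes $\approx e^t$, so the off-diagonal average also blows up. The set $\{v^{(j)}\}$ is a one-dimensional curve in $\Sphere{}$, and it simply does not carry enough ``room'' for a prior whose support is $\exp(\Omega(\sqrt{\log\usedim}))$ points with controlled pairwise correlations.

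The paper escapes this obstruction by constructing a genuinely higher-dimensional prior. It takes the orthant-cone prior of Proposition~\ref{PropKpos} in $\real^m$ with $m \asymp \log_\delta\usedim$---the uniform measure over $\KposSet$, which has $\binom{m}{s}$ support points ($s = \lfloor\sqrt{m}\rfloor$) with inner products $i/s$ that decay hypergeometrically---and pushes it forward into $\Mon \cap \LinSpacePerp \cap \NewBall^c(1)$ via the linear map $b \mapsto \OldLmat\OldAmat b$ (a geometrically weighted lower-triangular $\OldAmat$ followed by a block-replication isometry $\OldLmat$). Lemma~\ref{LemSupport} shows the image lands in the right set, and Lemma~\ref{LemBeta-to-Eta} shows the inner products $\inprod{\eta}{\eta'}$ transfer to $\inprod{b}{b'}/(1-r)^2 + O(1/s)$, so the orthant $\chi^2$ bound~\eqref{Eqnbjduck} yields~\eqref{EqnMoMoLB}. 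The crucial point your plan misses is that one must embed an $\Omega(\log\usedim)$-dimensional prior with rapidly decaying pairwise correlations into $\Mon$, not merely select step functions; you correctly identify the $\chi^2$ computation as the main obstacle but that computation cannot be completed for the family you chose.
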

\noindent See Section~\ref{sec:proof_prop_mon} for the proof of this
proposition.

Proposition~\ref{PropMon}, equipped with previous achievable results
by GLRT \eqref{EqnGLRTmonotone}, gives a sharp rate characterization on the testing radius for both
problems with regard to monotone cone:
\begin{align*}
  \Hyp_0: \theta = 0 ~~\text{ versus }~~\Hyp_1: \theta \in \Mon\\
  \text{ and }~~\Hyp_0: \theta \in \myspan(\ONES) ~~\text{ versus }~~\Hyp_1: \theta \in \Mon.
\end{align*}
In both cases, the optimal testing radius satisfies
$\epsopt^2(\LinSpace, \Mon, \rho) \asymp \sigma^2
\sqrt{\log(e\usedim)}$.  As a consequence, the GLRT is optimal up to
an universal constant.  As far as we know, the problem of testing a
zero or constant vector versus the monotone cone as the alternative
has not been fully characterized in any past work.

%%%%%%%%%%%%%%%%%%%%%%%%%%%%%%%%%%%%%%%%%%%%%%%%%%%%%%%%%%%%%%%%%%%%%%%%%%%%%%%%%%%

\section{Proofs of main results}
\label{SecProofs}

We now turn to the proofs of our main results, with the proof of
Theorems~\ref{ThmGLRT} and~\ref{ThmLBGen} given in
Sections~\ref{SecProofThmGLRT} and~\ref{SecProofThmLBGen}
respectively. In all cases, we defer the
proofs of certain more technical lemmas to the appendices.

%%%%%%%%%%%%%%%%%%%%%%%%%%%%%%%%%%%%%%%%%%%%%%%%%%%%%%%%%%%%%%%%%%%%%%%%%%%

\subsection{Proof of Theorem~\ref{ThmGLRT}}
\label{SecProofThmGLRT}

Since the cones $(\ConeSmall, \ConeBig)$ are both invariant under
rescaling by positive numbers, we can first prove the result for noise
level $\sigma = 1$, and then recapture the general result by rescaling
appropriately.  Thus, we fix $\sigma = 1$ throughout the remainder of
the proof so as to simplify notation.  Moreover, let us recall that
the GLRT consists of tests of the form $\psiglrt(\yvec) \defn
\Ind(T(\yvec) \geq \beta)$, where the likelihood ratio $T(\yvec)$ is
given in equation~\eqref{EqnGLRTstat}.  Note here the cut-off $\beta
\in [0, \: \infty)$ is a constant that does not depend on the data vector
  $\yvec$.

By the previously discussed equivalence~\eqref{EqnSimpleEquiv}, we can
focus our attention on the simpler problem
$\TEST{\{0\}}{\Kcone}{\epsilon}$, where $\Kcone = \ConeBig \cap
\ConeSmallPolar$. 
By the monotonicity of the square function for positive numbers,
the GLRT is controlled by the
behavior of the statistic $\ltwo{\ProjK(\yvec)}$, and in particular
how it varies depending on whether $\yvec$ is drawn according to
$\Hyp_0$ or $\Hyp_1$.

Letting $g \in \real^\usedim$ denote a standard Gaussian random
vector, let us introduce the random variable \mbox{$Z(\theta) \defn
  \ltwo{\ProjK(\theta + g)}$} for each $\theta \in
\real^\usedim$. Observe that the statistic $\|\ProjK(y)\|_2$ is
distributed according to $Z(0)$ under the null $\Hyp_0$, and according
to $Z(\theta)$ for some $\theta \in \Kcone$ under the alternative
$\Hyp_1$.  The Lemma~\ref{LemConcentration} which is stated and proved in Appendix~\ref{AppLemConcentration} guarantees random variables of the type
$Z(\theta)$ and $\inprod{\theta}{\ProjK g}$ are sharply concentrated around their expectations.

As shown in the sequel, using the concentration
bound~\eqref{EqnLipschitz1}, the study of the GLRT can be reduced to
the problem of bounding the mean difference
\begin{align} 
\label{EqnGammaDifference}
\Gamma(\theta) & \defn \Exs \left( \ltwo{\ProjK (\theta + g)} -
\ltwo{\ProjK g} \right)
\end{align}
for each $\theta \in \Kcone$.  In particular, in order to prove the
achievability result stated in part (a) of Theorem~\ref{ThmGLRT}, we
need to lower bound $\Gamma(\theta)$ uniformly over $\theta \in
\Kcone$, whereas a uniform upper bound on $\Gamma(\theta)$ is required
in order to prove the negative result in part (b).

%%%%%%%%%%%%%%%%%%%%%%%%%%%%%%%%%%%%%%%%%%%%%%%%%%%%%%%%%%%%%%%%%%%%%%%%%%%%%%%%%%%%%%%%%%%%

\subsubsection{Proof of GLRT achievability result (Theorem~\ref{ThmGLRT}(a))}

By assumption, we can restrict our attention to alternative
distributions defined by vectors $\theta \in \Kcone$ satisfying the
lower bound \mbox{$\ltwo{\theta}^2 \geq \upconst \; \NEWEPSCRITSQ$},
where for every target level $\rho \in (0,1)$, constant $\upconst$ is
chosen such that
\begin{align*}
  \upconst \defn \max \left\{ 32\pi, ~ \inf \left(B>0 \mid
  \frac{B^{1/2}}{(2^7\pi B)^{1/4} + 16} - \frac{2}{\sqrt{e}} \geq
  \sqrt{-8\log(\rho/2)} \right) \right\}.
\end{align*}
Since function $f(x) \defn \frac{x^{1/2}}{(2^7\pi x)^{1/4} + 16} -
\frac{2}{\sqrt{e}}$ is strictly increasing and goes to infinity, so
that the constant $\upconst$ defined above is always finite.

We first claim that it suffices to show that for such vector, the
difference~\eqref{EqnGammaDifference} is lower bounded as
\begin{align}
\label{Eqnc1.eq}
\Gamma(\theta) \geq \frac{\upconst^{1/2}}{(2^7\pi
    \upconst)^{1/4} + 16} - \frac{2}{\sqrt{e}} = f(\upconst).
\end{align}
Taking inequality~\eqref{Eqnc1.eq} as given for the moment, we claim
that the test
\begin{align*}
\psinew(\yvec) = \Ind[\ltwo{\ProjK(\yvec)}^2 \geq \newthresh] \qquad
\mbox{with threshold $\newthresh \defn (\frac{1}{2} f(\upconst) +
  \Exs[\ltwo{\ProjK(g)}])^2$}
\end{align*}
has uniform error probability controlled as
\begin{align}
\label{EqnUniControl}
\UNIERR(\psinew; \{0\}, \Kcone, \epsilon) & \defn \Exs_{0}
       [\psinew(\yvec)] + \sup_{\theta\in \Kcone, \ltwo{\theta}^2 \geq \epsilon^2} \Exs_\theta[1 -
         \psinew(\yvec)] 
         \; \leq \; 2 e^{- f^2(\upconst)/8} < \rho.
\end{align}
where the last inequality follows from the definition of $\upconst$.

\paragraph{Establishing the error control~\eqref{EqnUniControl}}

Beginning with errors under the null $\Hyp_0$, we have
\begin{align*}
\Exs_{0} [\psinew(\yvec)]  = \Prob_0(\ltwo{\ProjK g} \geq \sqrt{\newthresh})
& = \Prob_0 \big[\ltwo{\ProjK g} - \Exs[\ltwo{\ProjK g}] \geq
  f(\upconst)/2 \big] \\
& \leq \exp(-f^2(\upconst)/8),
\end{align*}
where the final inequality follows from the concentration
bound~\eqref{EqnLipschitz1} in Lemma~\ref{LemConcentration}, as along
as $f(\upconst) > 0$.

On the other hand, we have
\begin{align*}
\sup_{\theta\in \Kcone, \ltwo{\theta}^2 \geq \epsilon^2} \Exs_{\theta} [1-\psinew(\yvec)] & = \Prob
\Big[\ltwo{\ProjK(\theta + g)} \leq \frac{1}{2} f(\upconst) + \Exs \ltwo{\ProjK g}
  \Big] \\
& = \Prob \Big[ \ltwo{\ProjK (\theta+ g)} - \Exs \ltwo{\ProjK(\theta+
    g)} \leq \frac{1}{2} f(\upconst) - \Gamma(\theta) \Big],
\end{align*}
where the last equality follows by substituting \mbox{$\Gamma(\theta) = \Exs
[\ltwo{\ProjK (\theta+g)}] - \Exs[\ltwo{\ProjK g}]$}.  Since the lower
bound~\eqref{Eqnc1.eq} guarantees that \mbox{$\frac{1}{2} f(\upconst) - \Gamma(\theta)
\leq -\frac{1}{2} f(\upconst)$}, we find that
\begin{align*}
\sup_{\theta\in \Kcone, \ltwo{\theta}^2 \geq \epsilon^2} \Exs_{\theta} [1-\psinew(\yvec)] & \leq \Prob
\Big[ \ltwo{\ProjK (\theta+ g)} - \Exs \ltwo{\ProjK(\theta+ g)} \leq -
  \frac{1}{2}f(\upconst)\Big] \\
& \leq \exp(-f^2(\upconst)/8),
\end{align*}
where the final inequality again uses the concentration
inequality~\eqref{EqnLipschitz1}.  Putting the pieces together yields
the claim~\eqref{EqnUniControl}.

% \paragraph{Lower bounding the difference $\Gamma(\theta)$}  
The only remaining
detail is to prove the lower bound~\eqref{Eqnc1.eq} on the
difference~\eqref{EqnGammaDifference}.  
To mainstream our proof, we leave the proof of this detail to Appendix~\ref{AppLemLSEup}.

% In order to do so, we require an 
% auxiliary lemma which is stated and proved in  

% \begin{lems} 
% \label{LemLSEup}
% For every closed convex cone $\Kcone$ and vector $\theta \in \Kcone$, we
% have the lower bounds
% \begin{subequations}
% \begin{align}
% \label{Eqngl1.eq}
% \Gamma(\theta) & \geq \frac{\ltwo{\theta}^2}{2 \ltwo{\theta} + 8\Exs
%   \ltwo{\ProjK g}} -\frac{2}{\sqrt{e}}.
% \end{align}
% Moreover, for any vector $\theta$ that also satisfies the inequality
% $\inprod{\theta}{\Exs \ProjK g} \geq \ltwo{\theta}^2 $, we have
% \begin{align}
% \label{Eqngl2.eq}
% \Gamma(\theta) & \geq \alpha^2(\theta) \frac{\inprod{\theta}{\Exs
%     \ProjK g} - \ltwo{\theta}^2} {\alpha(\theta) \ltwo{\theta} + 2
%   \Exs \ltwo{\ProjK g}} -\frac{2}{\sqrt{e}},
% \end{align}
% where $\alpha(\theta) \defn 1 - \exp \left(\frac{-
%   \inprod{\theta}{\Exs \ProjK g}^2}{8 \ltwo{\theta}^2} \right)$.
% \end{subequations}
% \end{lems}
% \noindent  \\

%%%%%%%%%%%%%%%%%%%%%%%%%%%%%%%%%%%%%%%%%%%%%%%%%%%%%%%%%%%%%%%%%%%%%%%%%%%%%%%%%

\subsubsection{Proof of GLRT lower bound  (Theorem~\ref{ThmGLRT}(b))}
\label{sec:probThmGLRTb}

We divide our proof into two scenarios, depending on whether or not $\Exs \ltwo{\ProjK g}$ is less than 128. We focus on the case when $\Exs \ltwo{\ProjK g} \geq 128$ and leave the other scenario to the Appendix~\ref{AppScene2}. 

In this case, our strategy is to exhibit some $\theta \in \Hyp_1$ for which the expected difference
$\Gamma(\theta) = \Exs \left( \ltwo{\ProjK (\theta + g)} -
\ltwo{\ProjK g} \right)$ is small, which then leads to significant
error when using the GLRT.  In order to do so, we require an auxiliary lemma (Lemma~\ref{LemLatte}) to suitable control $\Gamma(\theta)$ which is stated and proved in Appendix~\ref{AppLemLatte}.

We now proceed to prove our main claim.  Based on
Lemma~\ref{LemLatte}, we claim that if $\epsilon^2 \leq \lowconst
\NEWEPSCRITSQ$ for a suitably small constant $\lowconst$ such that 
\begin{align*}
 \lowconst \defn \sup \left\{ \lowconst>0 \mid 12\sqrt{\lowconst} + 3\sqrt{\lowconst}
  \left(\frac{2}{e}\right)^{1/4} + 24\sqrt{\frac{\lowconst}{2e}} \,\leq \, \frac{1}{16} \right\},
\end{align*}
then 
\begin{align}
\label{EqnMusgraves}
\Gamma(\theta) \leq \frac{1}{16},
\qquad
\text{ for some } \theta \in \Kcone, ~\ltwo{\theta} \geq \epsilon.
\end{align}
We take inequality~\eqref{EqnMusgraves} as given for now, returning to prove it in our appendix~\ref{AppLemMusgraves}. In summary, then, we have exhibited some $\theta \in{}
\Hyp_1$---namely, a vector $\theta \in \Kcone$ with $\ltwo{\theta}
\geq \epsilon$---such that $\Gamma(\theta) \leq 1/16$.  This special
vector $\theta$ plays a central role in our proof.

We claim that the GLRT cannot succeed with error smaller than $0.11$ no matter how the cut-off $\beta$ is chosen. We leave this calculation to Appendix~\ref{AppCalculate}.

%%%%%%%%%%%%%%%%%%%%%%%%%%%%%%%%%%%%%%%%%%%%%%%%%%%%%%%%%%%%%%%%%%%%%%%%%%%%%%%%%%%%%

\subsection{Proof of Theorem~\ref{ThmLBGen}}
\label{SecProofThmLBGen}

We now turn to the proof of Theorem~\ref{ThmLBGen}.  As in the proof of
Theorem~\ref{ThmGLRT}, we can assume without loss of generality that
$\sigma = 1$.
% since the cones $(\ConeSmall, \ConeBig)$ are both invariant under
% rescaling by positive numbers.
Since $0 \in \ConeSmall$ and $\Kcone \defn \ConeBig
\cap \ConeSmallPolar \subseteq \ConeBig$, it suffices to prove a lower
bound for the reduced problem of testing
\begin{align*}
\Hyp_0: \theta = 0,~~~\text{versus}~~~ \Hyp_1: \ltwo{\theta} \geq
\epsilon, ~\theta \in \Kcone.
\end{align*}
Let $\NewBall(1) = \{ \theta \in \real^\usedim \, \mid \, \|\theta\|_2
< 1 \}$ denotes the open Euclidean ball of radius $1$, and let
$\NewBall^c(1) \defn \real^\usedim \backslash \NewBall(1)$ denotes its
complement.

We divide our analysis into two cases, depending on whether or not $\Exs
\ltwo{\ProjK g}$ is less than $7$. In both cases, let us set $\infolowconst = 1/14$.

\paragraph{Case 1} 
Suppose that $\Exs \ltwo{\ProjK g} < 7$.  In this case,  
\begin{align*}
\epsilon^2 \leq \infolowconst \EPSGENSQK \leq \infolowconst
\Exs \ltwo{\ProjK g} < 1/2.
\end{align*}
Similar to our proof of Theorem~\ref{ThmGLRT}(b), Case 1, by
reducing to the simple verses simple testing
problem~\eqref{EqnSimpleTesting}, any test yields testing error no
smaller than $1/2$ if $\epsilon^2 < 1/2$.  So our lower bound
directly holds for the case when $\Exs \ltwo{\ProjK g} < 7$. 

\paragraph{Case 2} Otherwise, suppose we have \mbox{$\Exs \ltwo{\ProjK g} \geq 7$}. 
The following lemma provides a generic way to lower bound the testing error. 

\begin{lems} 
\label{LemChisquareBound}
For every non-trivial closed convex cone $\Kcone$ and probability measure $\qprob$
supported on $\Kcone \cap \NewBall^c(1)$, the testing error is lower
bounded as
\begin{align}
\label{EqnTestErrorLowerBound}
\inf_{\psi} \UNIERR(\psi; \{0\}, \Kcone, \epsilon) \geq 1 -
\frac{1}{2} \sqrt{\Exs_{\eta, \eta'} \,
  \exp(\epsilon^2 \inprod{\eta}{\eta'})-1},
\end{align} 
where $\Exs_{\eta, \eta'}$ denotes expectation with respect to an
i.i.d pair $\eta, \eta' \sim \qprob$.
\end{lems}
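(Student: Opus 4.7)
The plan is a standard Le Cam--chi-squared reduction, adapted to this cone setting. First, I would note that any probability measure $\qprob$ on $\Kcone \cap \NewBall^c(1)$ can be rescaled to give a prior on the alternative via $\eta \mapsto \epsilon \eta$: since $\Kcone$ is a cone and $\ltwo{\eta} \geq 1$, the vector $\epsilon \eta$ lies in $\Kcone$ and satisfies $\ltwo{\epsilon\eta} \geq \epsilon$, so it is a valid element of $\Hyp_1$. Bounding the supremum over the alternative by an expectation under this prior yields
\begin{align*}
\inf_\psi \UNIERR(\psi; \{0\}, \Kcone, \epsilon) \;\geq\; \inf_\psi \big\{ \Exs_0[\psi(y)] + \Exs_{y \sim \Prob_\qprob}[1 - \psi(y)] \big\},
\end{align*}
where $\Prob_\qprob$ denotes the Gaussian mixture with density $p_\qprob(y) = \Exs_{\eta \sim \qprob}[\varphi(y - \epsilon\eta)]$ and $\varphi$ is the standard Gaussian density on $\real^\usedim$.

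Second, I would invoke the classical Neyman--Pearson identity, which tells us that the infimum on the right equals $1 - \tvnorm{\Prob_0 - \Prob_\qprob}$. Combined with the standard $\chi^2$ bound $\tvnorm{\Prob_0 - \Prob_\qprob} \leq \tfrac{1}{2}\sqrt{\chi^2(\Prob_\qprob \,\|\, \Prob_0)}$ (an immediate consequence of Cauchy--Schwarz applied to $|dp_\qprob/dp_0 - 1|$ integrated against $dp_0$), we obtain
\begin{align*}
\inf_\psi \UNIERR(\psi; \{0\}, \Kcone, \epsilon) \;\geq\; 1 - \tfrac{1}{2}\sqrt{\chi^2(\Prob_\qprob \,\|\, \Prob_0)}.
\end{align*}

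Third, I would compute $\chi^2(\Prob_\qprob \,\|\, \Prob_0)$ explicitly. The likelihood ratio equals
\begin{align*}
L(y) \;=\; \frac{dp_\qprob}{dp_0}(y) \;=\; \Exs_{\eta \sim \qprob}\!\Big[\exp\!\Big(\epsilon \inprod{\eta}{y} - \tfrac{\epsilon^2}{2}\ltwo{\eta}^2\Big)\Big].
\end{align*}
Then $\chi^2(\Prob_\qprob \,\|\, \Prob_0) = \Exs_0[L(y)^2] - 1$, and expanding the square with Fubini introduces an i.i.d.\ copy $\eta'$. Applying the Gaussian moment generating function identity $\Exs_0\!\big[\exp(\epsilon \inprod{\eta + \eta'}{y})\big] = \exp(\tfrac{\epsilon^2}{2}\ltwo{\eta + \eta'}^2)$ and using the cancellation $\tfrac{1}{2}\ltwo{\eta + \eta'}^2 - \tfrac{1}{2}\ltwo{\eta}^2 - \tfrac{1}{2}\ltwo{\eta'}^2 = \inprod{\eta}{\eta'}$ yields
\begin{align*}
\chi^2(\Prob_\qprob \,\|\, \Prob_0) \;=\; \Exs_{\eta, \eta'}\exp\!\big(\epsilon^2\inprod{\eta}{\eta'}\big) - 1,
\end{align*}
which when combined with the previous display gives the claimed bound~\eqref{EqnTestErrorLowerBound}.

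Each of these three steps is standard, so there is no real obstacle in the proof of the lemma itself. The delicate part, which is separate from this lemma and presumably carried out later in the paper's application, is to choose $\qprob$ so that $\Exs_{\eta,\eta'} \exp(\epsilon^2 \inprod{\eta}{\eta'})$ stays close to $1$ while the support of $\qprob$ explores enough of $\Kcone \cap \NewBall^c(1)$ to force $\epsilon$ to be of the order $\EPSGEN$; this is where the geometric quantity in~\eqref{EqnRcri} will appear.
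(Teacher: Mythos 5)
Your proposal is correct and follows essentially the same route as the paper: reduce the composite alternative to a mixture $\Prob_\qprob$ under the rescaled prior $\epsilon\qprob$, invoke the Neyman--Pearson identity $\inf_\psi\{\Exs_0[\psi]+\Exs_{\Prob_\qprob}[1-\psi]\}=1-\tvnorm{\Prob_0-\Prob_\qprob}$ together with the $\chi^2$-to-TV bound, and evaluate $\chi^2(\Prob_\qprob\|\Prob_0)$ via Fubini and the Gaussian moment generating function to arrive at $\Exs_{\eta,\eta'}\exp(\epsilon^2\inprod{\eta}{\eta'})-1$. The paper's appendix proof performs exactly this calculation (it leaves the Neyman--Pearson step as a one-line remark rather than naming it), so there is nothing to add.
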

\noindent See Appendix~\ref{AppLemChisquareBound} for the proof of
this claim.

%%%%%%%%%%%%%%%%%%%%%%%%%%%%%%%%%%%%%%%%%%%%%%%%%%%%%%%%%%%%%%%%%%%%%%%%%%%%%%%%%%%%%

We apply Lemma \ref{LemChisquareBound} with the probability measure
$\qprob$ defined as 
\begin{align} 
\label{EqnPrior}
\qprob(A) \defn \Prob \left(\frac{\ProjK g}{\Exs \ltwo{\ProjK g}/2} \in A \, 
\Bigm \vert \, \ltwo{\ProjK g} \geq \Exs \ltwo{\ProjK g}/2 \right),
\end{align}
for measurable set $A \subset \real^\usedim$ where $g$ denotes a standard
$d$-dimensional Gaussian random vector i.e., \mbox{$g \sim \NORMAL(0,
I_{\usedim}).$}  It is easy to check that measure $\qprob$ is supported
on $\Kcone \cap \NewBall^c(1)$. 
We make use of Lemma~\ref{LemInfoKey} in Appendix~\ref{AppLemInfoKey} to control 
$\Exs_{\eta,\eta'} \exp(\epsilon^2 \inprod{\eta}{\eta'})$ and thus upper bounding the testing error. 

% claim.

% %
% \blems
% \label{LemInfoKey}
% %
% Letting $\eta$ and $\eta'$ denote an i.i.d pair of random variables
% drawn from the distribution $\qprob$ defined in
% equation~\eqref{EqnPrior}, we have
% \begin{align}
% \label{EqnUBspicy}
%   \Exs_{\eta,\eta'} \exp(\epsilon^2 \inprod{\eta}{\eta'}) & \leq
%   \frac{1}{a^2} \exp\left(\frac{5\epsilon^2\ltwo{\Exs \ProjK g}^2
%   }{(\Exs \ltwo{\ProjK g})^2} + \frac{40 \epsilon^4 \Exs(\ltwo{\ProjK
%       g}^2)}{(\Exs\ltwo{\ProjK g})^4} \right),
% \end{align}
% where \mbox{$a \defn \Prob(\ltwo{\ProjK g} \geq \frac{1}{2} \Exs
%   \ltwo{\ProjK g})$} and $\epsilon > 0$ satisfies the inequality
% $\epsilon^2 \leq (\Exs \|\Pi_K g\|_2)^2/32$. 
% \elems
% \noindent See supplementary file [Appendix~\ref{AppLemInfoKey}] for the proof of this
% claim.

We now lower bound the testing error when \mbox{$\epsilon^2 \leq
  \infolowconst \, \EPSGENSQK$}. By definition of $\EPSGENSQK$, the
inequality $\epsilon^2 \leq \infolowconst \, \EPSGENSQK$
implies that
\begin{align*}
\epsilon^2 \leq \infolowconst \Exs \ltwo{\ProjK g} \qquad \text{ and
}~ \epsilon^2 \leq \infolowconst \left(\frac{\Exs \ltwo{\ProjK
    g}}{\ltwo{\Exs \ProjK g}}\right)^2.
\end{align*}
The first inequality above implies, with $\infolowconst = 1/14$, that
\mbox{$\epsilon^2 \leq \Exs \ltwo{\ProjK g}/14 \leq (\Exs \ltwo{\ProjK
    g})^2/32$} (note that $\Exs \ltwo{\ProjK g} \geq 7$). Therefore
the assumption in Lemma~\ref{LemInfoKey} is satisfied so that
inequality~\eqref{EqnUBspicy} gives
\begin{align} 
\label{EqnUBmiddle}
\Exs_{\eta,\eta'} \exp(\epsilon^2 \inprod{\eta}{\eta'}) \, \leq \,
\frac{1}{a^2} \exp\left(5 \infolowconst + \frac{40 \infolowconst^2
  \Exs (\ltwo{\ProjK g}^2)}{(\Exs\ltwo{\ProjK g})^2} \right).
\end{align}
So it suffices to control the right hand side above. From the
concentration result in Lemma~\ref{LemConcentration}, we obtain 
\begin{align*}
a & = \Prob( \ltwo{\ProjK g} - \Exs \ltwo{\ProjK g} \geq - \frac{1}{2}
\Exs \ltwo{\ProjK g} ) \, \geq \, 1 - \exp(-\frac{(\Exs
  \ltwo{\ProjKcone g})^2}{8}) > 1 - \exp(-6),
\end{align*}
where the last step uses $\Exs \ltwo{\ProjK g} \geq 7$, and 
\begin{align*}
   \Exs\ltwo{\ProjK g}^2 =  (\Exs\ltwo{\ProjK g})^2 + \var(\ltwo{\ProjK g})
   \leq  (\Exs\ltwo{\ProjK g})^2 + 4.
\end{align*}
Here the last inequality follows from the fact that $\var(\ltwo{\ProjK
  g}) \leq 4$---see Lemma~\ref{LemConcentration}.  Plugging these two
inequalities into expression~\eqref{EqnUBmiddle} gives
\begin{align*}
  \Exs_{\eta,\eta'} \exp(\epsilon^2 \inprod{\eta}{\eta'}) \, \leq \,
  \left(\frac{1}{1-\exp(-6)} \right)^2 \exp\left(5 \infolowconst + 40
  \infolowconst^2 + \frac{160 \infolowconst^2}{(\Exs\ltwo{\ProjK
      g})^2} \right),
\end{align*}
where the right hand side is less than $2$ when $\infolowconst = 1/14$
and $\Exs\ltwo{\ProjK g} \geq 7.$ Combining with
inequality~\eqref{EqnTestErrorLowerBound} forces the testing error to
be lower bounded as
\begin{align*}
\forall \psi,~~ \UNIERR(\psi; \{0\}, \Kcone, \epsilon) \geq 1 -
\frac{1}{2} \sqrt{\Exs_{\eta, \eta'} \, \exp(\epsilon^2
  \inprod{\eta}{\eta'})-1} \geq \frac{1}{2} > \rho,
\end{align*}
which completes the proof of Theorem~\ref{ThmLBGen}.

%%%%%%%%%%%%%%%%%%%%%%%%%%%%%%%%%%%%%%%%%%%%%%%%%%%%%%%%%%%%%%%%%%%%%%%%%%%%%%%%%%%%%

\section{Discussion}
\label{SecDiscussion}

In this paper, we have studied the the problem of testing between two
hypotheses that are specified by a pair of non-oblique closed convex
cones.  Our first main result provided a characterization, sharp up to
universal multiplicative constants, of the testing radius achieved by
the generalized likelihood ratio test.  This characterization was
geometric in nature, depending on a combination of the Gaussian width
of an induced cone, and a second geometric parameter.  Due to the
combination of these parameters, our analysis shows that the GLRT can
have very different behavior even for cones that have the same
Gaussian width; for instance, compare our results for the circular and
orthant cone in Section~\ref{SecGLRTResults}.  It is worth noting that
this behavior is in sharp contrast to the situation for estimation
problems over convex sets, where it is understood that (localized)
Gaussian widths completely determine the estimation error associated
with the least-squares estimator~\cite{van1996weak,chatterjee2014new}.
In this way, our analysis reveals a fundamental difference between
minimax testing and estimation.

Our analysis also highlights some new settings in which the GLRT is
non-optimal.  Although past
work~\cite{warrack1984likelihood,MenSal91,perlman1999emperor} has
exhibited non-optimality of the GLRT in certain settings, in the
context of cones, all of these past examples involve oblique cones.
In Section~\ref{SecCartesianProductCone}, we gave an example of
sub-optimality which, to the best of our knowledge, is the first for a
non-oblique pair of cones---namely, the cone $\{0\}$, and a certain
type of Cartesian product cone.  

Our work leaves open various questions, and we conclude by
highlighting a few here.  First, in Section~\ref{SecGeneralLower}, we
proved a general information-theoretic lower bound for the minimax
testing radius.  This lower bound provides a sufficient condition for
the GLRT to be minimax optimal up to constants. Despite being tight in
many non-trivial situations, our information-theoretic lower bound is
not tight for all cones; proving such a sharp lower bound is an
interesting topic for future research.  Second, as with a long line of
past work on this
topic~\cite{raubertas1986hypothesis,menendnez1992testing,menendez1992dominance,
warrack1984likelihood},
our analysis is based on assuming that the noise variance $\sigma^2$
is known.  In practice, this may or may not be a realistic assumption,
and so it is interesting to consider the extension of our results to
this setting.

We note that our minimax lower bounds are proved by constructing prior
distributions on $\Hyp_0$ and $\Hyp_1$ and then control the distance
between marginal likelihood functions.  Following this idea, we can
also consider our testing problem in the Bayesian framework.  Without
any prior preference on which hypothesis to take, we will let
$\Pr(\Hyp_0) = \Pr(\Hyp_1) = 1/2$; thus the Bayesian testing procedure
makes decision based on quantity
\begin{align}
\label{EqnBF}
  B_{01} \defn \frac{m(y \mid \Hyp_0)}{m(y\mid \Hyp_1)} =
  \frac{\int_{\theta\in \ConeSmall} \Prob_\theta(y) \pi_1(\theta)
    d\theta}{\int_{\theta\in \ConeBig} \Prob_\theta(y) \pi_2(\theta)
    d\theta},
\end{align}
which is often called Bayesian factor in literature.  Analyzing the
behavior of this statistic is an interesting direction to pursue in
the future.

\subsection*{Acknowledgements}

This work was partially supported by Office of Naval Research MURI
grant DOD-002888, Air Force Office of Scientific Research Grant
AFOSR-FA9550-14-1-001, and National Science Foundation Grants
CIF-31712-23800 and DMS-1309356, NSF CAREER Grant DMS-16-54589

%%%%%%%%%%%%%%%%%%%%%%%%%%%%%%%%%%%%%%%%%%%%%%%%%%%%%%%%%%%%%%%%%%%%%%%%%%%%%%%%%

%%%%%%%%%%%%%%%%%%%%%%%%%%%%%%%%%%%%%%%%%%%%%%%%%%%%%%%%%%%%%%%%%%%%%%%%%%%%%%%%

% \vspace*{1cm}

%%%%%%%%%%% DO BIBLIOGRAPHY %%%%%%%%%%%%%%%%%%%%%%%%%%%%%%%%%%%%%%%%%
% \bibliographystyle{abbrv}
% \bibliography{../ConeBiblio}

%%%%%%%%%%%%%%%%%%%%%%%%%%%%%%%%%%%%%%%%%%%%%%%%%%%%%%%%%%%%%%%%%%%%%%%

\vspace*{2cm}

\appendix

% This supplementary material is organized as follows. 
% In Section~\ref{AppGLRTsub}, we first explain the intuition behind the example in Section \ref{SecCartesianProductRevisit} \cite{Wei17glrt} where the GLRT is shown to be sub-optimal, and construct a series of other cases where this sub-optimality is observed.
% We then provide the proofs of Propositions~\ref{PropKpos} and~\ref{PropMon} in 
% Sections~\ref{sec:proof_prop_kpos} and~\ref{sec:proof_prop_mon}, respectively. 
% It follows by some background on distance metrics and their properties in Section~\ref{SecTv-Chi}.
% The proofs of Theorem \ref{ThmGLRT} (a) and (b) are completed in Section~\ref{AppThmGLRTA}
% and \ref{AppThmGLRTB} respectively. The proofs of the lemmas for Theorem \ref{ThmLBGen} are collected  in Section \ref{AppThmLBGen}.
% Finally, the technical lemmas which were crucially used in the proofs of
% the Proposition~\ref{PropMon} and the monotone cone example are proved in Section \ref{Applemmas}.  

%%%%%%%%%%%%%%%%%%%%%%%%%%%%%%%%%%%%%%%%%%%%%%%%%%%%%%%%%%%%%%%%%%%%%%%%%%%%

\section{The GLRT sub-optimality}
\label{AppGLRTsub}
In this appendix, we first try to understand why the GLRT is sub-optimal for the Cartesian product cone $\Prc = \Cir{\usedim-1}(\alpha) \times \real$, and use this intuition to construct a more general class of
problems for which a similar sub-optimality is witnessed.  

\subsection{Why is the GLRT sub-optimal?}
\label{badex}
Let us consider tests with null $\ConeSmall = \{0\}$ and a general product
alternative of the form $\ConeBig = \Prc = K \times \real$, where $K
\subseteq \real^{d-1}$ is a base cone.  Note that $K =
\Cir{\usedim-1}$ in our previous example.

Now recall the decomposition~\eqref{EqnMulti} of the statistic $T$
that underlies the GLRT. By the product nature of the cone, we have
\begin{align*}
T(y) = \ltwo{\Pi_{\Prc} y} & = \ltwo{(\Pi_{K} (y_{-\usedim}),
  ~y_{\usedim})} = \sqrt{\ltwo{\Pi_{K} (y_{-\usedim})}^2 +
  \ltwo{y_{\usedim}}^2},
\end{align*}
where $y_{-\usedim} \defn (y_1,\ldots,y_{\usedim-1}) \in \real^{d-1}$
is formed from the first $\usedim-1$ coordinates of $y$.  Suppose that
we are interested in testing between the zero vector and a vector
$\theta^* = (0, \ldots, 0, \theta^*_\usedim)$, non-zero only in the
last coordinate, which belongs to the alternative.  With this
particular choice, under the null distribution, we have $y = \sigma g$
whereas under the alternative, we have $y = \theta^* + \sigma g$.
Letting $\Exs_0$ and $\Exs_1$ denote expectations under these two
Gaussian distributions, the performance of the GLRT in this direction
is governed by the difference
\begin{align*}
  \frac{1}{\sigma} \big \{\Exs_1[T(y)] - \Exs_0[T(y)] \big \} =
  \Exs_1
\sqrt{ \ltwo{\Pi_{K} (g_{-\usedim})}^2 +
  \ltwo{\frac{\theta^*_\usedim}{\sigma} + g_d}^2} \\
\quad \quad \quad - \Exs_0 \sqrt{\ltwo{\Pi_{K} (g_{-\usedim})}^2 +
  \ltwo{g_d}^2}.
\end{align*}
Note both terms in this difference involve a $(\usedim-1)$-dimensional
``pure noise'' component---namely, the quantity $\ltwo{\Pi_{K}
  (g_{-\usedim})}^2$ defined by the sub-vector $g_{-\usedim} \defn
(g_1, \ldots, g_{\usedim-1})$---with the only signal lying the last
coordinate.  For many choices of cone $K$, the pure noise component
acts as a strong mask for the signal component, so that the GLRT is
poor at detecting differences in the direction $\theta^*$. Since the
vector $\theta^*$ belongs to the alternative, this leads to
sub-optimality in its overall behavior. Guided by this idea, we can
construct a series of other cases where the GLRT is sub-optimal.  See
Appendix~\ref{AppNonOptGLRT} for details.

%%%%%%%%%%%%%%%%%%%%%%%%%%%%%%%%%%%%%%%%%%%%%%%%%%%%%%%%%%%%%%%%%%%%%%%%%%%%

\subsection{More examples on the GLRT sub-optimality}
\label{AppNonOptGLRT}

Now let us construct a larger class of product cones for
which the GLRT is sub-optimal.  For a given subset $S \subseteq \{1,
\ldots, \usedim \}$, define the subvectors $\theta_S = (\theta_i, i
\in S)$ and $\theta_{S^c} = (\theta_j, j \in S^c \}$, where $S^c$
denotes the complement of $S$.  For an integer $\ell \geq 1$, consider
any cone $\Kcone_\ell \subset \real^{\usedim}$ with the following two
properties:
\begin{itemize}
  \item its Gaussian width scales as $\Exs \Width(\Kcone_\ell \cap
    \Ball(1)) \asymp \sqrt{\usedim}$, and
  \item for some fixed subset $\{1,2\ldots,\usedim\}$ of cardinality
    $\ell$, there is a scalar $\gamma > 0$ such that
\begin{align*}
  \|\theta_S\|_2 \geq \gamma \|\theta_{S^c}\|_2 \quad \mbox{for all
    $\theta \in \Kcone_\ell$.}
\end{align*}
\end{itemize}
As one concrete example, it is easy to check that the circular cone is
a special example with $\ell = 1$ and $\gamma = 1/\tan(\alpha)$.  The
following result applies to the GLRT when applied to testing the null
$C_1 = \{0\}$ versus the alternative $C_2 = \Kcone^s_\times = \Kcone
\times \real$.
\begin{prop}
\label{PropSubOpt}
For the previously described cone testing problem, the GLRT testing
radius is sandwiched as
\begin{align*}
   \epsglrt^2 \asymp \sqrt{\usedim} \sigma^2,
 \end{align*} 
whereas a truncation test can succeed at radius $\epsilon^2 \asymp
\sqrt{\ell} \sigma^2$.
\end{prop}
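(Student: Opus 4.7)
The plan is to apply Theorem~\ref{ThmGLRT} to pin down the GLRT radius for the product cone $\Kcone^s_\times = \Kcone_\ell \times \real$, and then separately analyze a coordinate-truncation test to establish the dimension-free-looking upper bound. Since the null is $\{0\}$, the non-obliqueness hypothesis is automatic, so Theorem~\ref{ThmGLRT} applies directly with $\Kcone = \Kcone^s_\times$, and we only need to evaluate the two quantities making up $\EPSCRITSQ$ in~\eqref{EqnTightGLRT}.

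For the GLRT radius, I would first show $\Exs\ltwo{\Pi_{\Kcone^s_\times}g}\asymp \sqrt{\usedim}$. Writing $g = (g_{1:\usedim}, g_{\usedim+1})$, the product structure gives $\|\Pi_{\Kcone^s_\times}g\|_2^2 = \|\Pi_{\Kcone_\ell}g_{1:\usedim}\|_2^2 + g_{\usedim+1}^2$, and combining the width assumption $\Exs\ltwo{\Pi_{\Kcone_\ell}g_{1:\usedim}}\asymp\sqrt{\usedim}$ with the concentration bound (Lemma~\ref{LemConcentration}) that controls $\Exs\|\Pi_{\Kcone_\ell}g_{1:\usedim}\|_2^2$ up to an additive constant yields the claim. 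For the second term in $\EPSCRITSQ$, note that both $+e_{\usedim+1}$ and $-e_{\usedim+1}$ lie in $\Kcone^s_\times\cap\Sphere{}$, while by symmetry in the last coordinate $\Exs\Pi_{\Kcone^s_\times}g$ has its last entry equal to zero. Hence the infimum in~\eqref{EqnTightGLRT} is $\le 0$, forcing the second term to $+\infty$, so $\EPSCRITSQ\asymp\sqrt{\usedim}$. Both parts of Theorem~\ref{ThmGLRT} then pin down $\epsglrt^2\asymp\sigma^2\sqrt{\usedim}$.

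For the truncation test, let $\tilde S \defn S\cup\{\usedim+1\}$, $|\tilde S|=\ell+1$, and define
\begin{align*}
\varphi(y) \defn \Ind\big[\ltwo{y_{\tilde S}}\geq \beta\big].
\end{align*}
The key geometric step is to show that if $\theta\in\Kcone^s_\times$ and $\ltwo{\theta}\geq\epsilon$, then $\ltwo{\theta_{\tilde S}}\geq c_\gamma \epsilon$ for a constant $c_\gamma>0$ depending only on $\gamma$. Indeed, the hypothesis on $\Kcone_\ell$ gives $\|\theta_S\|_2^2\geq \frac{\gamma^2}{1+\gamma^2}\|\theta_{1:\usedim}\|_2^2$, and adding the $(\usedim+1)$-th coordinate preserves this inequality, so $\|\theta_{\tilde S}\|_2^2\geq \frac{\gamma^2}{1+\gamma^2}\|\theta\|_2^2$. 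Thus $\varphi$ is effectively a GLRT for the reduced problem $\TEST{\{0\}}{\real^{\ell+1}}{c_\gamma\epsilon}$ on the coordinates $\tilde S$. By the subspace calculation of Section~\ref{SecSubspaceCone}, this test has uniform error at most $\rho$ as soon as $c_\gamma^2\epsilon^2\gtrsim\sigma^2\sqrt{\ell+1}$, i.e., $\epsilon^2\gtrsim\sigma^2\sqrt{\ell}$.

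The genuinely delicate step is the first one, where I must argue that the width of $\Kcone^s_\times$ scales as $\sqrt{\usedim}$ rather than e.g.\ being dominated by the extra $\real$-factor; the product identity and the concentration bound $\var(\ltwo{\Pi_{\Kcone_\ell}g_{1:\usedim}})\le 4$ make this routine but require care, especially to verify the lower bound $\Exs\ltwo{\Pi_{\Kcone^s_\times}g}\gtrsim\sqrt{\usedim}$ which is inherited from $\Kcone_\ell$ by monotonicity of projection norms on supersets. The rest of the argument is algebraic: the symmetry-in-last-coordinate observation driving the second term to infinity is identical to the one used for $\Prc$ in Section~\ref{SecCartesianProductCone}, and the truncation-test analysis reduces cleanly to the already-proved subspace case.
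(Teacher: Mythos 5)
Your proposal is correct and follows the same high-level strategy as the paper (evaluate $\EPSCRITSQ$ via Theorem~\ref{ThmGLRT} for the GLRT radius; analyze a coordinate-truncation test for the faster achievable rate). But in one place you do something different, and importantly, your version is more careful than the paper's. The paper's Appendix proof takes the truncation test to be $\varphi(y) = \Ind[\|y_S\|_2 \geq \beta]$, with $S$ the $\ell$-element subset coming from the hypothesis $\|\theta_S\|_2 \geq \gamma\|\theta_{S^c}\|_2$ on the base cone $\Kcone_\ell$. As written, this test ignores the unconstrained $\real$-factor: an alternative of the form $\theta = (0,\ldots,0,t) \in \Kcone_\ell \times \real$ has $\|\theta\|_2 = |t|$ arbitrarily large but $\theta_S = 0$, so $\varphi$ is blind to it and the claimed separation $\epsilon^2 \asymp \sigma^2\sqrt{\ell}$ cannot hold for that test. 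Your modification to $\tilde S = S \cup \{\usedim+1\}$ repairs this, and it agrees with the concrete treatment of $\Prc = \Cir{\usedim-1}(\alpha)\times\real$ in Section~\ref{SecCartesianProductRevisit}, where the test does include the last coordinate via $\Ind[\|(y_1,y_\usedim)\|_2 \geq \beta]$. You also supply the geometric inequality $\|\theta_{\tilde S}\|_2^2 \geq \tfrac{\gamma^2}{1+\gamma^2}\|\theta\|_2^2$ and the two-sided width estimate $\Exs\ltwo{\Pi_{\Kcone^s_\times}g}\asymp\sqrt{\usedim}$ (lower bound by cone monotonicity, upper bound via the product identity and $\var(\ltwo{\ProjK g})\leq 4$), both of which the paper asserts without detail. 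In short: same route, but your write-up fills a genuine gap and makes the supporting estimates explicit.
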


\begin{proof}
The claimed scaling of the GLRT testing radius follows as a corollary
of Theorem~\ref{ThmGLRT} after a direct evaluation of $\EPSCRITSQ$.
In order to do so, we begin by observing that
\begin{align*}
  \inf_{\eta \in C_2 \times \Sphere{}} \inprod{\eta}{\Exs\Pi_{C_2} g}
  & \leq \inprod{e_\usedim}{\Exs \Pi_{c_2} g} \; = \; 0, \quad
  \mbox{and} \\
 \Exs \Width(C_2\cap \Ball(1)) &= \Exs \ltwo{\Pi_{C_2} g} \asymp
 \sqrt{\usedim}
\end{align*}
which implies that $\EPSCRITSQ \asymp \sqrt{\usedim}$, and hence
implies the sandwich claim on the GLRT via Theorem~\ref{ThmGLRT}.

On the other hand, for some pre-selected $\beta > 0$, consider the
truncation test
\begin{align*}
  \varphi(\yvec) & \defn \Ind \big[ \|\yvec_S \|_2 \geq \beta \big],
\end{align*}
This test can be viewed as a GLRT for testing the zero null against
the alternative $\real^\ell$, and hence it will succeed with
separation \mbox{$\epsilon^2 \asymp \sigma^2 \sqrt{\ell}$.}  Putting
these pieces together, we conclude that the GLRT is sub-optimal
whenever $\ell$ is of lower order than $\usedim$.

\end{proof}

%%%%%%%%%%%%%%%%%%%%%%%%%%%%%%%%%%%%%%%%%%%%%%%%%%%%%%%%%%%%%%%%%%%%%%%%%%%%

\section{Proofs for Proposition 1 and 2}
In this section, we complete the proofs of Propositions~\ref{PropKpos} and~\ref{PropMon} in 
Sections~\ref{sec:proof_prop_kpos} and~\ref{sec:proof_prop_mon}, respectively. 

\subsection{Proof of Proposition~\ref{PropKpos}}
\label{sec:proof_prop_kpos}

As in the proof of Theorem~\ref{ThmGLRT} and Theorem~\ref{ThmLBGen}, we can assume without loss of generality that
$\sigma = 1$ since $\Kpos$ is invariant under
rescaling by positive numbers.
We split our proof into two cases, depending on whether or not the
dimension $\usedim$ is less than $81$.

\paragraph{Case 1:}  First suppose that
$\usedim < 81$.  If the separation is upper bounded as $\epsilon^2
\leq \infolowconst \sqrt{\usedim}$, then setting
$\infolowconst = 1/18$ yields
\begin{align*}
\epsilon^2 \leq \infolowconst \sqrt{\usedim} < 1/2.
\end{align*}
Similar to our proof for Theorem~\ref{ThmGLRT}(b) Case 1, if
$\epsilon^2 < 1/2$, every test yields testing error no smaller
than $1/2$. It is seen by considering a simple verses simple testing
problem~\eqref{EqnSimpleTesting}.  So our lower bound directly holds
for the case when $\usedim < 81$ satisfies.

\paragraph{Case 2:} Let us consider the case when dimension $\usedim \geq 81$.
The idea is to make use of our Lemma~\ref{LemChisquareBound} to show that the testing error is at least $\rho$ whenever $\epsilon^2 \leq \infolowconst \, \sqrt{\usedim}.$
In order to apply Lemma~\ref{LemChisquareBound}, the key is to construct a probability measure $\qprob$ supported on set $\Kcone \cap
\NewBall^c(1)$ such that for i.i.d. pair $\eta, \eta'$ drawn from $\qprob$, quantity $\Exs e^{\lambda \inprod{\eta}{\eta'}}$ can be well controlled. 
We claim that there exists such a probability measure $\qprob$ that
\begin{align} 
\label{EqnNonNegLB}
  \Exs_{\eta, \eta'} e^{\lambda \inprod{\eta}{\eta'}} \leq
  \exp\left(\exp \left(\frac{2 + \lambda}{\sqrt{\usedim}-1}\right)- \left(1-\frac{1}{\sqrt{d}}
  \right)^2 \right) \qquad{\text {where } \lambda \defn
    \epsilon^2.}
\end{align}
Taking inequality~\eqref{EqnNonNegLB} as given for now, letting
$\infolowconst = 1/8$, we have $\lambda = \epsilon^2 \leq 
\sqrt{\usedim}/8$. So the right hand side in expression~\eqref{EqnNonNegLB} can be further upper bounded as
\begin{align*}
  \exp\left(\exp \left(\frac{2}{\sqrt{\usedim}-1} + \frac{\sqrt{\usedim}}{\sqrt{\usedim} - 1} \frac{\lambda}{\sqrt{\usedim}} \right)- \left(1-\frac{1}{\sqrt{d}}
  \right)^2 \right)
  &\leq 
  \exp\left(\exp \left(\frac{1}{4} + \frac{9}{8}\cdot \frac{1}{8} \right)- \left(1-\frac{1}{9}\right)^2 \right) \\
  &< 2,
\end{align*}
where we use the fact that $\usedim \geq 81$. 
As a consequence of Lemma~\ref{LemChisquareBound}, the testing error of
every test satisfies
\begin{align*}
\inf_{\psi} \UNIERR(\psi; \{0\}, \Kpos, \epsilon) \geq 1 - \frac{1}{2}
\sqrt{\Exs_{\eta, \eta'} \, \exp(\epsilon^2
  \inprod{\eta}{\eta'})-1} > \frac{1}{2} \geq \rho.
\end{align*}
Putting these two cases together, our lower bound holds for any
dimension thus we complete the proof of Proposition~\ref{PropKpos}.

So it only remains to construct a probability measure $\qprob$ such that the
inequality~\eqref{EqnNonNegLB} holds.  We begin by introducing some
helpful notation.  For an integer $s$ to be specified, consider a
collection of vectors $\KposSet$ containing all $\usedim$-dimensional
vectors with exactly $s$ non-zero entries and each non-zero entry
equals to $1/\sqrt{s}.$
Note that there are in total $M \defn {\usedim \choose s}$ vectors of this
type.  
Letting $\qprob$ be the uniform distribution over this set of vectors namely
\begin{align} \label{EqnKposMeasure}
  \qprob( \{\eta\} ) \defn \frac{1}{M}, 
  \qquad \eta \in \KposSet.
\end{align}
Then we can write the expectation as
\begin{align*}
  \Exs e^{\lambda \inprod{\eta}{\eta'}} = \frac{1}{M^2} \sum_{\eta,
    \eta' \in \KposSet} e^{\lambda \inprod{\eta}{\eta'}}.
\end{align*}
Note that the inner product $\inprod{\eta}{\eta'}$ takes values $i/s$, for integer $i \in \{0, 1, \ldots, s \}$ and 
given every vector $\eta$ and integer $i \in \{0, 1, \ldots, s \}$, the
number of $\eta'$ such that $\inprod{\eta}{\eta'} = i/s$ equals to
${s \choose i}{\usedim-s \choose s-i}$. 
Consequently, we obtain
\begin{align}
\label{EqnChiSquareUpper}
\Exs e^{\lambda \inprod{\eta}{\eta'}} = {\usedim \choose s}^{-1}
\sum_{i=0}^s {s \choose i} {\usedim-s \choose s-i} e^{\lambda i/s}
= \sum_{i=0}^s \frac{\SPEC_i z^i}{i !},
\end{align}
where
\begin{align*}
z \defn e^{\lambda/s} \text{ and } \SPEC_i \defn
\frac{(s!(\usedim-s)!)^2}{((s-i)!)^2 \usedim!(\usedim-2s+i)!}.
\end{align*}

Let us set integer $s \defn \lfloor \sqrt{\usedim} \rfloor$.
We claim quantity $\SPEC_i$ satisfies the following bound
\begin{align} \label{EqnNonNe-Ai}
\SPEC_i \leq \exp\Big(- (1-\frac{1}{\sqrt{d}})^2 +
\frac{2i}{\sqrt{\usedim}-1}\Big) \qquad \mbox{for all $i \in \{0, 1,
  \ldots, s\}$.}
\end{align} 
Taking expression~\eqref{EqnNonNe-Ai} as given for now and plugging into inequality \eqref{EqnChiSquareUpper}, we have
\begin{align*}
\Exs e^{\lambda \inprod{\eta}{\eta'}} &\leq \exp \Big(-
(1-\frac{1}{\sqrt{d}})^2 \Big)\sum_{i=0}^{s} \frac{(z\exp(\frac{2}{\sqrt{\usedim}-1}))^i}{i !} \\
& \, \stackrel{(a)}{\leq} \,
\exp \Big(- (1-\frac{1}{\sqrt{d}})^2 \Big) \exp \left( z \exp(\frac{2}{\sqrt{\usedim}-1})\right)\\
& \, \stackrel{(b)}{\leq} \,
\exp \left(- \left(1-\frac{1}{\sqrt{d}}\right)^2 + \exp \left(\frac{2+\lambda}{\sqrt{\usedim}-1}\right) \right),
\end{align*}
where step (a) follows from the standard power series expansion
$e^x = \sum_{i=0}^\infty \frac{x^i}{i!}$ and step (b) follows by $z = e^{\lambda/s}$ and $s = \lfloor \sqrt{\usedim} \rfloor > \sqrt{d} - 1$. 
Therefore it verifies inequality~\eqref{EqnNonNegLB} and complete our argument.

It is only left for us to check inequality~\eqref{EqnNonNe-Ai} for $A_i$.
Using the fact that $1 - x \leq e^{-x}$, it is guaranteed that 
\begin{subequations}
\begin{align}
\label{EqnHfun1}  
\SPEC_0 = \frac{((\usedim - s)!)^2}{\usedim! (\usedim - 2s)!}  &= (1 -
\frac{s}{\usedim}) (1 - \frac{s}{\usedim-1})\cdots(1 -
\frac{s}{\usedim-s+1}) \leq \exp( - s\sum_{i=1}^s
\frac{1}{\usedim-s+i}).
\end{align}
Recall that integer $s = \lfloor \sqrt{\usedim} \rfloor$, then we
can bound the sum in expression~\eqref{EqnHfun1} as
\begin{align*}
s\sum_{i=1}^s \frac{1}{\usedim-s+i} \geq s \sum_{i=1}^{s}
\frac{1}{\usedim} = \frac{s^2}{\usedim} \geq (1-\frac{1}{\sqrt{d}})^2,
\end{align*}
which, when combined with inequality~\eqref{EqnHfun1}, implies that
$\SPEC_0 \leq \exp( - (1-\frac{1}{\sqrt{d}})^2 )$.

Moreover, direct calculations yield
\begin{align} 
\label{EqnHfun2} 
\frac{\SPEC_i}{\SPEC_{i-1}} = \frac{(s-i+1)^2}{\usedim-2s+i},
\qquad 1\leq i\leq s.
\end{align}
\end{subequations}
This ratio is decreasing with index $i$ as $1\leq i\leq s$, thus is upper bounded by $A_1/A_0$, which
implies that
\begin{align*}
\frac{\SPEC_i}{\SPEC_{i-1}} \leq
\frac{\usedim}{\usedim-2\sqrt{\usedim}+1} = (1 +
\frac{1}{\sqrt{\usedim}-1})^2 \leq \exp(\frac{2}{\sqrt{\usedim}-1}),
\end{align*}
where the last inequality follows from $1+x \leq e^x.$ Putting pieces
together validates bound \eqref{EqnNonNe-Ai} thus finishing the proof
of Proposition~\ref{PropKpos}.

%%%%%%%%%%%%%%%%%%%%%%%%%%%%%%%%%%%%%%%%%%%%%%%%%%%%%%%%%%%%%%%%%%%%%%

\subsection{Proof of Proposition~\ref{PropMon}}
\label{sec:proof_prop_mon}

As in the proof of Theorem~\ref{ThmGLRT} and Theorem~\ref{ThmLBGen}, we can assume without loss of generality that
$\sigma = 1$ since $\LinSpace$ and $\Mon$ are both invariant under
rescaling by positive numbers.

We split our proof into two cases, depending on whether or not $\sqrt{\log (e\usedim)} < 14.$

\paragraph{Case 1:}  First suppose $\sqrt{\log (e\usedim)} < 14$, so that the choice $\infolowconst = 1/28$
yields the upper bound
\begin{align*}
\epsilon^2 \leq \infolowconst \sqrt{\log (e\usedim)} < 1/2.
\end{align*}
Similar to our proof of the lower bound in Theorem~\ref{ThmGLRT}, by
reducing to a simple testing problem~\eqref{EqnSimpleTesting}, any
test yields testing error no smaller than $1/2$ if $\epsilon^2 <
1/2$.  Thus, we conclude that the stated lower bound holds when $\sqrt{\log (e\usedim)} < 14$.

\paragraph{Case 2:}  Otherwise, we may assume that $\sqrt{\log (e\usedim)} \geq 14$.  In this case, we exploit
Lemma~\ref{LemChisquareBound} in order to show that the testing error
is at least $\rho$ whenever $\epsilon^2 \leq \infolowconst \, 
\sqrt{\log (e\usedim)}$. 
Doing so requires constructing a probability measure $\qprob_\LinSpace$ supported
on $\Mon \cap \LinSpacePerp \cap \NewBall^c(1)$ such that the expectation $\Exs
e^{\epsilon^2 \inprod{\eta}{\eta'}}$ can be well controlled, where
$(\eta, \eta')$ are drawn i.i.d according to $\qprob_\LinSpace$. 
Note that $\LinSpace$ can be either $\{0\}$ or $\myspan(\ONES)$.

Before doing that, let us first introduce some notation. 
Let $\delta \defn 9$ and $r \defn 1/3$ (note that $\delta = r^{-2}$). Let 
\begin{align} \label{EqnM}
m \defn \max \left \{ n ~\Big\vert~ \sum_{i=1}^n \lfloor \frac{\delta-1}{\delta^i} (\usedim + \log_\delta \usedim + 3)\rfloor \, < \, \usedim \right \}.  
\end{align}
We claim that the integer $m$ defined above satisfies:
\begin{align} \label{EqnSandwichM}
  \lceil \frac{3}{4} \log_\delta (\usedim) \rceil + 1 \leq  m \leq \lceil \log_\delta \usedim \rceil,
 \end{align} 
where $\lceil x \rceil$ denotes the smallest integer that is greater than or equal to $x$.
To see this, notice that for $t = \lceil \frac{3}{4} \log_\delta (\usedim) \rceil + 1$, we have 
\begin{align*}
  \sum_{i=1}^{t} \lfloor \frac{\delta-1}{\delta^i} (\usedim + \log_\delta \usedim + 3) \rfloor
  \leq \sum_{i=1}^{t} \frac{\delta-1}{\delta^i} (\usedim + \log_\delta \usedim + 3)
  & = (1-\frac{1}{\delta^t})(\usedim + \alpha)\\
  & \, \stackrel{(i)}{\leq} \, \usedim + \alpha - \frac{\usedim + \alpha}{\delta^2 \usedim^{3/4}} 
  \, \stackrel{(ii)}{<} \, \usedim,
\end{align*}
where we denote $\alpha \defn \log_\delta \usedim + 3.$
The step (i) follows by definition that $t = \lceil \frac{3}{4} \log_\delta (\usedim) \rceil + 1$ while step (ii) holds because as $\sqrt{\log (e\usedim)} \geq 14$, we have $\alpha = \log_\delta \usedim + 3 < \usedim^{1/4}/\delta^2$.
On the other hand, for $t  = \lceil \log_\delta \usedim \rceil$, we have 
\begin{align*}
   \sum_{i=1}^{t} \lfloor \frac{\delta-1}{\delta^i} (\usedim + 
   \log_\delta \usedim + 3) \rfloor 
   &\geq \,
   \sum_{i=1}^{t} \frac{\delta-1}{\delta^i} (\usedim + \alpha) - t\\
   &= (1-\frac{1}{\delta^t})(\usedim + \alpha) - t\\
   &> \usedim + \alpha - \frac{\usedim + \alpha}{\usedim} - (\log_\delta \usedim + 1),
\end{align*}
where the last step uses fact $t = \lceil \log_\delta \usedim \rceil$. 
Since when $\sqrt{\log (e\usedim)} \geq 14$, we have $\alpha = \log_\delta \usedim + 3 < \usedim$, therefore $(\usedim + \alpha)/\usedim + \log_\delta \usedim + 1 \leq 2+ \log_\delta \usedim + 1 = \alpha$, which guarantees that  
\begin{align*}
   \sum_{i=1}^{t} \lfloor \frac{\delta-1}{\delta^i} (\usedim + 
   \log_\delta \usedim + 3) \rfloor > \usedim.
\end{align*}
We thereby established inequality~\eqref{EqnSandwichM}.

We now claim that there exists a probability measure $\qprob_\LinSpace$ supported on $\Mon\cap\LinSpacePerp\cap \NewBall^c(1)$ such that
\begin{align}
\label{EqnMoMoLB}
\Exs_{\eta, \eta' \sim \qprob_\LinSpace} e^{\lambda \inprod{\eta}{\eta'}} \leq
\exp\left(\exp \left(\frac{9\lambda/4 + 2}{\sqrt{m}-1}\right) - \left(1-\frac{1}{\sqrt{m}} \right)^2 + \frac{27 \lambda }{32 (\sqrt{m}-1)}\right),
~~
\text{ where } \lambda \defn \epsilon^2.
\end{align}
Recall that we showed in inequality~\eqref{EqnSandwichM} that $m \geq \lceil \frac{3}{4} \log_\delta (\usedim) \rceil + 1$.
Setting $\infolowconst = 1/62$ implies that whenever $\epsilon^2 \leq \infolowconst \sqrt{\log (e\usedim)}$, we have
\begin{align}
  \epsilon^2 \leq \frac{1}{62} \sqrt{\log (e\usedim)} =  
  \frac{1}{62} \sqrt{1 + \frac{4}{3} \log \delta \cdot \frac{3}{4} \log_{\delta} \usedim}
  \, \leq \,
  \frac{1}{62} \sqrt{\frac{4}{3} \log \delta \left(1+\frac{3}{4} \log_{\delta} \usedim \right)}
  \, \leq \,
  \frac{1}{36} \sqrt{m}.
\end{align}
So the right hand side in expression~\eqref{EqnMoMoLB}
can be made less than 2 by
\begin{align*}
  & \exp \left(\frac{9\lambda/4 + 2}{\sqrt{m}-1}\right) - \left(1-\frac{1}{\sqrt{m}} \right)^2 + \frac{27 \lambda }{32 (\sqrt{m}-1)}\\
  \leq &
  \exp \left(
  \frac{9\lambda}{4\sqrt{m}} \frac{\sqrt{m}}{\sqrt{m}-1} + \frac{2}{7}\right) - \left(1-\frac{1}{8} \right)^2 + \frac{27 \lambda }{32\sqrt{m}}
  \frac{\sqrt{m}}{\sqrt{m}-1}\\
  \leq &
  \exp \left( 
  \frac{9}{4\cdot 36} \frac{8}{7} + \frac{2}{7}\right) - \left(1-\frac{1}{8}  \right)^2 + \frac{27}{32 \cdot 36} \frac{8}{7} < \log 2,
\end{align*}
where we use the fact that $\sqrt{m} \geq \sqrt{1+\frac{3}{4} \log_{\delta} \usedim} \geq 8.$
Lemma~\ref{LemChisquareBound} thus
guarantees the testing error to be no less than
\begin{align*}
\inf_{\psi} \UNIERR(\psi; \LinSpace, \Mon, \epsilon) \geq 1 - \frac{1}{2}
\sqrt{\Exs_{\eta, \eta'} \, \exp(\epsilon^2 \inprod{\eta}{\eta'})-1} > \frac{1}{2} \geq \rho, 
\end{align*}
which leads to our result in Proposition~\ref{PropMon}.

Now it only remains to construct a probability measure $\qprob_\LinSpace$ with the right
support such that inequality~\eqref{EqnMoMoLB} holds.  To do this, we
make use of a fact from the proof of Proposition~\ref{PropKpos} for the orthant cone
$\Kpos \subset \real^m$. 
Recall that to establish Proposition~\ref{PropKpos}, we constructed a probability measure $\dprob$ supported on $\Kpos \cap \Sphere{m} \subset
\real^m$ such that if $b, \bprim$ are an i.i.d pair drawn from $\dprob$, we have
\begin{align} 
\label{Eqnbjduck}
  \Exs_{b,b' \sim \dprob} e^{\lambda \inprod{b}{b'}} \leq
  \exp\left( \exp\left(\frac{2 + \lambda}{\sqrt{m}-1}\right)-
  \left(1-\frac{1}{\sqrt{m}} \right)^2 \right).
\end{align}
By construction, $\dprob$ is a uniform probability measure on the finite set $\KposSet$ which consists of all vectors in $\real^{m}$ which have $s$ non-zero entries which are all equal to $1/\sqrt{s}$ where $s = \lfloor \sqrt{m} \rfloor.$

Based on this measure $\dprob$, let us define $\qprob_\LinSpace$ as in the following lemma and establish some of its properties under the assumption that $\sqrt{\log (e\usedim)} \geq 14.$

\begin{lem} 
\label{LemSupport}
Let $\OldAmat$ be the $m \times m$ lower triangular matrix given by
\begin{subequations}
\begin{align} \label{EqnOldAmat}
  \OldAmat \defn
  \begin{pmatrix}
   1 & \\ r & 1 & \\ r^2 & r & 1\\ \vdots & \vdots & & \ddots
   \\ r^{m-1} & r^{m-2} & \cdots & & 1
  \end{pmatrix}.
\end{align} 
There exists an $\usedim \times m$ matrix $\OldLmat$ such that  
\begin{align} \label{EqnFF-ind}
  \OldLmat^T \OldLmat \, = \, \Ind_m
\end{align}  
\end{subequations}
and such that for every $b\in \KposSet$ and $\eta \defn \OldLmat \OldAmat b$, we have  
\begin{enumerate}
  \item $\eta \in \Mon \cap \LinSpacePerp \cap \NewBall^c(1)$ if $\LinSpace = \{0\}$, and 

  \item $\eta - \bar{\eta} \ONES \in \Mon\cap\LinSpacePerp \cap \NewBall^c(1)$ if $\LinSpace = \myspan(\ONES)$, where $\bar{\eta} = \sum_{i=1}^\usedim \eta_i /\usedim$ denotes the mean of the vector $\eta$.
\end{enumerate}

\end{lem}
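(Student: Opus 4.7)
The plan is to construct $\OldLmat$ as the block-indicator matrix of a geometric partition of $\{1,\ldots,\usedim\}$. Define block sizes $n_i \defn \lfloor(\delta-1)(\usedim + \log_\delta \usedim + 3)/\delta^i\rfloor$ for $i = 1,\ldots,m$ (precisely the sizes in \eqref{EqnM}), and set $n_0 \defn \usedim - \sum_{i=1}^m n_i$, which is a positive integer by the very definition of $m$. Partition $\{1,\ldots,\usedim\}$ into consecutive blocks $B_0, B_1,\ldots,B_m$ of these sizes arranged in this order, so that $B_0$ holds the smallest indices and $B_m$ the largest. Define $\OldLmat \in \real^{\usedim \times m}$ by setting its $i$-th column to $f_i \defn n_i^{-1/2}\ONES_{B_i}$ for $i = 1,\ldots,m$. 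Block disjointness and unit-norm columns immediately give $\OldLmat^T \OldLmat = \Ind_m$, establishing \eqref{EqnFF-ind}.

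With this choice, $\eta \defn \OldLmat \OldAmat b$ is piecewise constant on the partition: zero on $B_0$ and equal to $v_i \defn (\OldAmat b)_i/\sqrt{n_i}$ on each $B_i$, $i \geq 1$. The monotonicity $0 = v_0 \leq v_1 \leq \cdots \leq v_m$ follows by combining the recursion $(\OldAmat b)_{i+1} = b_{i+1} + r(\OldAmat b)_i \geq r(\OldAmat b)_i$ (valid since $b \geq 0$) with the arithmetic inequality $n_i \geq \delta\, n_{i+1}$, which holds because $\lfloor x\rfloor \geq \delta\lfloor x/\delta\rfloor$ for every real $x \geq 0$ and every positive integer $\delta$. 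Together these yield $v_{i+1}/v_i \geq r\sqrt{n_i/n_{i+1}} \geq r\sqrt{\delta} = 1$.

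For Case~1 ($\LinSpace = \{0\}$), the isometry $\OldLmat^T\OldLmat = \Ind_m$ gives $\|\eta\|_2 = \|\OldAmat b\|_2$; since $\OldAmat$ has unit diagonal and non-negative entries, $(\OldAmat b)_i \geq b_i$, so $\|\eta\|_2 \geq \|b\|_2 = 1$. The closed form $(\OldAmat^T \OldAmat)_{jk} = r^{|j-k|}/(1-r^2)$ moreover gives the sharper expansion $\|\OldAmat b\|_2^2 = (s(1-r^2))^{-1}\sum_{j,j'\in T}r^{|j-j'|}$ (with $T$ the support of $b$), whose diagonal part alone contributes $1/(1-r^2) = 9/8$; this sharper form is needed in Case~2. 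For Case~2 ($\LinSpace = \myspan(\ONES)$), monotonicity of $\eta - \bar\eta\ONES$ is inherited from $\eta$ and orthogonality to $\ONES$ is automatic, so the only remaining task is the norm bound $\|\eta - \bar\eta\ONES\|_2^2 = \|\eta\|_2^2 - \usedim\bar\eta^2 \geq 1$.

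The main obstacle is this last inequality, since the crude Cauchy--Schwarz bound $\usedim\bar\eta^2 \leq \|\eta\|_2^2$ is useless. I would instead exploit the geometric decay $\sqrt{n_i} \leq C r^i$ with $C \defn \sqrt{(\delta-1)(\usedim + \log_\delta \usedim + 3)}$; after writing $\bar\eta = \usedim^{-1}\sum_{i\geq 1}\sqrt{n_i}(\OldAmat b)_i$ and swapping the sums over $i$ and the support $T$ of $b$, the inner geometric series in $i$ collapses to yield $\usedim\bar\eta^2 \leq \frac{\delta-1}{s(1-r^2)^2}\bigl(\sum_{k\in T}r^{j_k}\bigr)^2$. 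Pairing this against the closed form for $\|\OldAmat b\|_2^2$ above and using the tuning $\delta = 9$ (so that $r^2 = 1/\delta$ makes the geometric prefactors match), careful bookkeeping shows that the positive cross-terms in $\|\OldAmat b\|_2^2$ dominate the mean-correction term uniformly in $b \in \KposSet$---the lower bound $s = \lfloor\sqrt m\rfloor$ together with the hypothesis $\sqrt{\log(e\usedim)} \geq 14$ ensures that $s$ is large enough for the leading constant in the resulting bracket to exceed $1$---yielding the required $\|\eta - \bar\eta\ONES\|_2 \geq 1$.
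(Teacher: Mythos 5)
Your construction of $\OldLmat$ (geometric blocks plus a zero block), the orthonormality \eqref{EqnFF-ind}, the monotonicity argument via $n_i \geq \delta n_{i+1}$ and $r\sqrt{\delta}=1$, and the Case~1 bound $\ltwo{\eta}=\ltwo{\OldAmat b}\geq \ltwo{b}=1$ are all sound; the last is even simpler than the paper's route through inequality~\eqref{EqnNorm}. The genuine gap is in the one delicate step, the Case~2 bound $\ltwo{\OldAmat b}^2-\usedim\bar{\eta}^2\geq 1$, which you dispatch with ``careful bookkeeping shows that the positive cross-terms in $\ltwo{\OldAmat b}^2$ dominate the mean-correction term uniformly in $b\in\KposSet$.'' That mechanism is unavailable exactly in the worst case. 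Take the support $T$ of $b$ to be well spread out (all gaps at least $8$, say) but containing both the index $1$ and the index $m$ (e.g.\ $s=8$, $m=64$, $T=\{1,10,19,\ldots,64\}$). Then every off-diagonal Gram entry is at most $r^{8}$, so the cross terms contribute $O(10^{-4})$ and $\ltwo{\OldAmat b}^2$ is essentially its diagonal part, which equals $\frac{1}{s(1-r^2)}\sum_{u\in T}\bigl(1-r^{2(m+1-u)}\bigr)\leq \frac{9}{8}-\frac{1}{8s}$; note in particular that your ``closed form'' $(\OldAmat^T\OldAmat)_{jk}=r^{|j-k|}/(1-r^2)$ omits the truncation term $-r^{2m+2-j-k}/(1-r^2)$, which is precisely what produces the loss $\frac{1}{8s}$ at $u=m$, and the margin of the whole argument is of this same order $\frac18$. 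Meanwhile, because $1\in T$, the first block alone gives $\usedim\bar{\eta}\geq \sqrt{n_1}\,(\OldAmat b)_1=\sqrt{n_1/s}$ with $n_1\geq \frac{8}{9}\usedim$, so $\usedim\bar{\eta}^2\geq \frac{8}{9s}$ (and including the next blocks it is $\approx \frac{9}{8s}$). Hence for this $b$ your decomposition yields $\ltwo{\eta-\bar{\eta}\ONES}^2\lesssim \frac{9}{8}-\frac{1}{8s}-\frac{8}{9s}$, which at $s=\lfloor\sqrt{m}\rfloor=8$ (the hypothesis $\sqrt{\log(e\usedim)}\geq 14$ only forces $\sqrt{m}\geq 8$) is about $0.99$, and about $0.97$ with the sharper evaluation of the mean term. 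So there are no cross terms to dominate anything in the spread-out regime, and the diagonal's slack over $1$ (about $\frac18$) is smaller than the mean correction ($\approx\frac{9}{8s}$) for $s\leq 9$: no bookkeeping along the proposed lines can close this step.

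For comparison, the paper does not attempt such a cancellation: it invokes the much stronger norm lower bound $\ltwo{\OldAmat b}^2\geq \frac{1}{(1-r)^2}-O(1/s)\approx \frac94$ (Lemma~\ref{LemBeta-to-Eta}, inequality~\eqref{EqnNorm}) and separately shows $\usedim\bar{\eta}^2\leq 0.2$, so the required slack comes entirely from the norm bound. Be aware, though, that the proof of \eqref{EqnNorm} reduces to consecutive supports, i.e.\ precisely the configuration in which your cross terms are present; in the spread-out configuration above $\ltwo{\OldAmat b}^2$ really is only about $\frac98$, so you cannot rescue your plan by citing a $\frac94$-type bound either. To complete your route you would have to drive $\usedim\bar{\eta}^2$ well below $\frac18$ uniformly over $b\in\KposSet$ (for instance by making $s$ substantially larger or by modifying the block construction so that the first block is not so heavy), which is a different argument from the one you outline; as written, the final step is a genuine gap.
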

\noindent See Appendix~\ref{sec:LemSupport} for the proof of this
claim.

% such that if we define random vector $\eta$ as
% \begin{align} 
% \label{EqnDefEta}
% \eta \defn \OldLmat \OldAmat b, \qquad{b \sim \dprob}, 
% \end{align}
% then the distributions of $\eta$, $\eta-\bar{\eta}$ are supported on  with subspace $\LinSpace = \{0\}$ or  correspondingly.

If $\LinSpace = \{0\}$, let probability measure $\qprob_\LinSpace$ be defined as the distribution of $\eta \defn \OldLmat \OldAmat b$ where $b\sim \dprob.$ 
Otherwise if $\LinSpace = \myspan(\ONES)$, let $\qprob_\LinSpace$ be
the distribution of $\eta - \bar{\eta}\ONES$ where again $\eta \defn \OldLmat \OldAmat b$ and $b\sim \dprob.$   
From Lemma~\ref{LemSupport} we know that $\qprob_\LinSpace$
is supported on $\Mon \cap \LinSpacePerp \cap \NewBall^c(1)$. It only
remains to verify the critical inequality~\eqref{EqnMoMoLB} to complete the proof of
Proposition~\ref{PropMon}. 
Let $\eta = \OldLmat \OldAmat b$ and $\eta' = \OldLmat \OldAmat \bprim$ with 
$b,\bprim$ being i.i.d having distribution $\dprob$.
Using the fact that $\OldLmat^T\OldLmat = \Ind_m$, we can write the inner product of $\eta,\eta'$ as
\begin{align*}
  \inprod{\eta}{\eta'} = b^T \OldAmat^T \OldLmat^T \OldLmat \OldAmat b' = \inprod{\OldAmat b}{\OldAmat b'}.
\end{align*}
The following lemma relates inner product $\inprod{\eta}{\eta'}$ to $\inprod{b}{\bprim}$, and thereby allows us to derive inequality~\eqref{EqnMoMoLB} based on inequality \eqref{Eqnbjduck}.
Recall that $\KposSet$ consists of all vectors in $\real^{m}$ which have $s$ non-zero entries which are all equal to $1/\sqrt{s}$ where $s = \lfloor \sqrt{m} \rfloor.$

\begin{lem}
\label{LemBeta-to-Eta}
For every $b, \bprim \in \KposSet$, we have
\begin{subequations}
\begin{align} 
\label{EqnGInprod}
\inprod{\OldAmat b}{\OldAmat b'} &~\leq~ \frac{\inprod{b}{\bprim}}{
  (1-r)^2} + \frac{r}{s (1-r)^2 (1-r^2)},\\
\label{EqnNorm}
\ltwo{\OldAmat b}^2 &~\geq~ \frac{1}{(1-r)^2} - \frac{2r + r^2}{s(1-r^2)(1-r)^2}.
\end{align}
\end{subequations}
\end{lem}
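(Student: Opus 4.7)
The plan is to reduce both inequalities to explicit computations with the Gram matrix $\OldAmat^T \OldAmat$, which has a clean closed form because $\OldAmat$ is a lower-triangular discrete convolution with exponentially decaying kernel $r^{i-j}$. Summing a geometric series in $i \geq \max(j,k)$ gives
\begin{align*}
(\OldAmat^T \OldAmat)_{jk} \;=\; \sum_{i=\max(j,k)}^{m} r^{2i-j-k} \;=\; \frac{r^{|j-k|}\bigl(1-r^{2(m-\max(j,k)+1)}\bigr)}{1-r^2}.
\end{align*}
Both~\eqref{EqnGInprod} and~\eqref{EqnNorm} then reduce to algebraic manipulations of the weighted sum $\sum_{j,k} b_j b'_k\,r^{|j-k|}$ together with a ``boundary correction'' coming from the finite-horizon factor $r^{2(m-\max+1)}$.

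For the upper bound~\eqref{EqnGInprod}, I would drop the boundary factor (which lies in $(0,1]$) and use nonnegativity of $b,b'$ to obtain
\begin{align*}
\inprod{\OldAmat b}{\OldAmat b'} \;\leq\; \frac{1}{1-r^2}\sum_{j,k} b_j b'_k\,r^{|j-k|}.
\end{align*}
Writing $S = \mathrm{supp}(b)$ and $S' = \mathrm{supp}(b')$, so that $b_j b'_k \in \{0,\,1/s\}$, the double sum splits into the diagonal contribution $\inprod{b}{b'}$ and an off-diagonal remainder $s^{-1}\sum_{j\in S,\,k\in S',\,j\neq k} r^{|j-k|}$. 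The latter I would bound by observing that, for each fixed $j$, at most one index lies at each positive distance on either side, so the inner sum is controlled by $\sum_{d\geq 1} r^d = r/(1-r)$. A careful rearrangement then absorbs one factor of $1/(1-r)$ into the coefficient of $\inprod{b}{b'}$ (promoting the prefactor $1/(1-r^2)$ to $1/(1-r)^2$), leaving a residual of the advertised $r/(s(1-r)^2(1-r^2))$ form.

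For the norm lower bound~\eqref{EqnNorm}, I would set $b' = b$ and write $\|\OldAmat b\|^2 = (1-r^2)^{-1}\sum_{j,k} b_j b_k r^{|j-k|} - R$, where $R$ collects the discarded boundary corrections. The exponent identity $|j-k| + 2(m-\max(j,k)+1) = 2m+2-j-k$ yields the clean factorization
\begin{align*}
R \;=\; \frac{r^{2m+2}}{1-r^2}\Big(\sum_{j} b_j\,r^{-j}\Big)^2,
\end{align*}
which can be bounded uniformly over $b \in \KposSet$ by a direct estimate of $\sum_{j \in S} r^{-j}$ (worst case: $S$ at the largest indices). The leading geometric sum $\sum_{j,k} b_j b_k r^{|j-k|} = 1 + (2/s)\sum_{j<k\in S} r^{k-j}$ is then lower bounded by controlling the off-diagonal piece uniformly over admissible supports of size $s$; combined with the prefactor $1/(1-r^2)$ this produces the target $1/(1-r)^2$, with the contribution of $R$ yielding the stated $(2r+r^2)/(s(1-r^2)(1-r)^2)$-order correction.

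The main obstacle is the combinatorial bookkeeping required to get a remainder of order $1/s$ (rather than merely $O(1)$) in~\eqref{EqnGInprod}: a naive distance-by-distance bound on $\sum_{j\in S,\,k\in S',\,j\neq k} r^{|j-k|}$ gives an $O(1)$ error, so the improvement requires exploiting the $1/s$ normalization of $b_j b'_k$ in tandem with the geometric decay of $r^{|j-k|}$ to separate cleanly the diagonal contribution $\inprod{b}{b'}$ from the residual. The analogous estimate for the tail $R$ in~\eqref{EqnNorm} is similar in flavor but easier, since the factorization of $R$ as the square of a one-dimensional geometric sum makes uniform control over $S$ straightforward.
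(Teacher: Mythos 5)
Your Gram-matrix computation is exactly the paper's starting point: expanding $\inprod{\OldAmat b}{\OldAmat b'}$ and summing the geometric series over the outer index gives the split into the infinite-horizon kernel $\frac{1}{1-r^2}\sum_{u,v}b_u b'_v r^{|u-v|}$ minus a nonnegative boundary term $\frac{1}{1-r^2}\sum_{u,v}b_u b'_v r^{2m+2-u-v}$ (your $R$, which indeed factors as a square), and dropping the boundary term for~\eqref{EqnGInprod} while bounding it for~\eqref{EqnNorm} is what the paper does too. The genuine gap is at the step you yourself flag as the main obstacle, and it is not mere bookkeeping: no ``careful rearrangement'' of a distance-by-distance estimate that ignores \emph{where} the supports sit can produce the stated bounds. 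For~\eqref{EqnGInprod}, take $b$ supported on the odd coordinates $\{1,3,\dots,2s-1\}$ and $b'$ on the even coordinates $\{2,4,\dots,2s\}$ (with $2s\le m$, $m$ large): then $\inprod{b}{b'}=0$, yet the off-diagonal sum $\frac{1}{s}\sum_{j\in S,\,k\in S'} r^{|j-k|}$ is of constant order (about $2r/(1-r^2)$), not $O(1/s)$, so your route necessarily leaves an $O(1)$ remainder rather than the advertised $r/(s(1-r)^2(1-r^2))$. Symmetrically for~\eqref{EqnNorm}: to promote the prefactor $1/(1-r^2)$ to $1/(1-r)^2$ you need a \emph{lower} bound of size $\frac{2r}{1-r}-O(1/s)$ on $\frac{2}{s}\sum_{j<k\in S}r^{k-j}$, and this fails uniformly over supports (for a widely spread $S$ that sum is essentially zero), so ``controlling the off-diagonal piece uniformly over admissible supports'' points in the wrong direction.

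The paper's proof handles exactly this point by a different device: it claims that $\frac{1}{1-r^2}\sum_{u,v}b_ub'_vr^{|u-v|}$ is unaffected by permuting coordinates, reduces to supports that are contiguous blocks $\{1,\dots,s\}$ and $\{k,\dots,k+s-1\}$, and then evaluates the double geometric sums in closed form in the two cases $k\le s$ (overlapping blocks) and $k>s$ (disjoint blocks); the enhancement of the coefficient of $\inprod{b}{b'}$ from $\frac{1}{1-r^2}$ to $\frac{1+r}{(1-r)(1-r^2)}=\frac{1}{(1-r)^2}$ and the $1/s$ remainders come out of those exact computations. That contiguity reduction is the load-bearing step missing from your sketch—and, as your own difficulty and the examples above indicate, the quantity genuinely depends on the positions of the supports, so any complete argument (including the paper's, whose permutation-invariance claim deserves scrutiny for exactly this reason) must engage directly with that reduction rather than with support-uniform estimates.
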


\noindent See Appendix~\ref{AppLemBeta-to-Eta} for the proof of this
claim.

We are now ready to prove inequality~\eqref{EqnMoMoLB}. We consider the two cases $\LinSpace = \{0\}$ and $\LinSpace = \myspan(\ONES)$ separately. 

For $\LinSpace = \{0\}$, recall that $r = 1/3$ and $s = \lfloor \sqrt{m} \rfloor \geq \sqrt{m} - 1$. Therefore as a direct
consequence of inequality~\eqref{EqnGInprod}, we have
\begin{align} 
  \label{EqnPlotMatrix}
\Exs_{\eta,\eta \sim \qprob} e^{\lambda \inprod{\eta}{\eta'}} \leq \Exs_{b,\bprim \sim \dprob} \exp \left(\frac{9\lambda}{4}
 \inprod{b}{\bprim} + \frac{27 \lambda }{32 (\sqrt{m}-1)}\right).
\end{align}
Combining inequality~\eqref{EqnPlotMatrix} with \eqref{Eqnbjduck}
completes the proof of inequality~\eqref{EqnMoMoLB}.

Let us now turn to the case when $\LinSpace = \myspan(\ONES)$.  The proof is essentially the same as for $\LinSpace = \{0\}$ with only some minor changes.  Again our goal is to check
inequality~\eqref{EqnMoMoLB}. For this, we write 
\begin{align*}
\Exs_{\eta, \eta'\sim \qprob_{\LinSpace}} e^{\lambda \inprod{\eta}{\eta'}} =
\Exs_{\eta, \eta'\sim \qprob_{\{0\}}} e^{\lambda \inprod{\eta -
    \bar{\eta}\ONES}{\eta' - \bar{\eta'}\ONES}} ~\leq~ \Exs_{\eta,
  \eta'\sim \qprob_{\{0\}}} e^{\lambda \inprod{\eta}{\eta'}}.
\end{align*}
Here the last step use the fact that $\inprod{\eta -
  \bar{\eta}\ONES}{\eta' - \bar{\eta'}\ONES} = \inprod{\eta}{\eta'} -
d\bar{\eta}\bar{\eta'} \leq \inprod{\eta}{\eta'}$ where the last inequality follows from the non-negativity of every entry of vectors $\eta$ and $\eta'$ (this non-negativity is a consequence of the non-negativity of $\OldLmat$ and $\OldAmat$ from Lemma~\ref{LemSupport} and non-negativity of vectors in $\KposSet$).

Thus, we have completed the proof of Proposition~\ref{PropMon}.

%%%%%%%%%%%%%%%%%%%%%%%%%%%%%%%%%%%%%%%%%%%%%%%%%%%%%%%%%%%%%%%%%%%%%%%

%%%%%%%%%%%%%%%%%%%%%%%%%%%%%%%%%%%%%%%%%%%%%%%%%%%%%%%%%%%%%%%%%%%%%%%%%%%%%%%%%%%%

\section{Distances and their properties}
\label{SecTv-Chi}

Here we collect some background on distances between probability
measures that are useful in analyzing testing error.  Suppose
$\mprob_1$ and $\mprob_2$ are two probability measures on Euclidean
space $(\real^\usedim, \mathcal{B})$ equipped with Lebesgue
measure. For the purpose of this paper, we assume $\mprob_1 \ll
\mprob_2$.  The \emph{total variation} (TV) distance between
$\mprob_1$ and $\mprob_2$ is defined as
\begin{subequations}
\begin{align}
\tvnorm{\mprob_1 - \mprob_2} & \defn \sup_{B \in \mathcal{B}}
|\mprob_1(B) - \mprob_2(B)| = \frac{1}{2} \int |d \mprob_1 - d
\mprob_2|.
\end{align}
A closely related measure of distance is the \emph{$\chi^2$ distance}
given by
\begin{align}
\chi^2(\mprob_1, \mprob_2) & \defn \int (\frac{d \mprob_1}{d \mprob_2}
- 1)^2 d \mprob_2.
\end{align}
For future reference, we note that the TV distance and $\chi^2$
distance are related via the inequality
\begin{align} \label{EqnChi-TV}
\tvnorm{\mprob_1 - \mprob_2} \leq \frac{1}{2} \sqrt{\chi^2(\mprob_1,
  \mprob_2)}.
\end{align}
\end{subequations}

%%%%%%%%%%%%%%%%%%%%%%%%%%%%%%%%%%%%%%%%%%%%%%%%%%%%%%%%%%%%%%%%%%%%%%%%%%%%%%

\section{Auxiliary proofs for Theorem 1 (a)}
\label{AppThmGLRTA}

In this appendix, we collect the proofs of lemmas involved in the
proof of Theorem~\ref{ThmGLRT}(a).

%%%%%%%%%%%%%%%%%%%%%%%%%%%%%%%%%%%%%%%%%%%%%%%%%%%%%%%%%%%%%%%%%%%%%%%%%%%%%%%%%%%%%%

\subsection{Proof of Lemma~\ref{LemConcentration}}
\label{AppLemConcentration}
Let us start with the statement with this lemma. 

\begin{lem} 
\label{LemConcentration}
For a standard Gaussian random vector $g \sim \NORMAL(0,
\EYE{\usedim})$, closed convex cone $\Kcone \in \real^\usedim$ and
vector $\theta \in \real^\usedim$, we have
\begin{subequations}
\begin{align} 
\label{EqnLipschitz1}
\Prob \Big( \pm (Z(\theta) - \Exs[Z(\theta)]) \geq t \Big) &
\leq \exp \Big(-\frac{t^2}{2} \Big), \qquad \mbox{and} \\
\label{EqnLipschitz2}
\Prob \Big( \pm (\inprod{\theta}{\ProjK g} - \Exs \inprod{\theta}{\ProjK  g}) \geq t \big) & \leq \exp \Big(-\frac{t^2}{2 \ltwo{\theta}^2}
\Big),
\end{align}
\end{subequations}
where both inequalities hold for all $t \geq 0$.
\end{lem}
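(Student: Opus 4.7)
The plan is to obtain both inequalities as immediate consequences of the Borell--Tsirelson--Ibragimov--Sudakov (Borell--TIS) concentration inequality~\cite{borell1975brunn}, which guarantees that for any $L$-Lipschitz function $f \colon \real^\usedim \to \real$ and standard Gaussian vector $g \sim \NORMAL(0, \EYE{\usedim})$,
\begin{align*}
\Prob\!\big(\pm(f(g) - \Exs f(g)) \geq t\big) \leq \exp\!\Big(-\tfrac{t^2}{2L^2}\Big) \quad \text{for all } t \geq 0.
\end{align*}
Thus everything reduces to identifying the Lipschitz constant of each random variable, and for this I will rely on the standard non-expansiveness property of Euclidean projection onto a closed convex set: $\ltwo{\ProjK u - \ProjK v} \leq \ltwo{u - v}$ for all $u, v \in \real^\usedim$.

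For inequality~\eqref{EqnLipschitz1}, set $f_1(g) \defn \ltwo{\ProjK(\theta + g)}$. By the reverse triangle inequality followed by the non-expansiveness of $\ProjK$,
\begin{align*}
|f_1(g) - f_1(g')| \leq \ltwo{\ProjK(\theta+g) - \ProjK(\theta+g')} \leq \ltwo{(\theta+g) - (\theta+g')} = \ltwo{g - g'},
\end{align*}
so $f_1$ is $1$-Lipschitz and Borell--TIS with $L=1$ yields the bound.

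For inequality~\eqref{EqnLipschitz2}, set $f_2(g) \defn \inprod{\theta}{\ProjK g}$. Combining Cauchy--Schwarz with non-expansiveness,
\begin{align*}
|f_2(g) - f_2(g')| = |\inprod{\theta}{\ProjK g - \ProjK g'}| \leq \ltwo{\theta}\,\ltwo{\ProjK g - \ProjK g'} \leq \ltwo{\theta}\,\ltwo{g - g'},
\end{align*}
so $f_2$ is $\ltwo{\theta}$-Lipschitz and Borell--TIS with $L = \ltwo{\theta}$ delivers the claim. There is no substantial obstacle: the only ingredient beyond Borell--TIS is the classical $1$-Lipschitz property of projection onto a closed convex cone, and a one-line bound suffices in each case.
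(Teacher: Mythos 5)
Your proposal is correct and follows essentially the same route as the paper: both arguments establish that $g \mapsto \ltwo{\ProjK(\theta+g)}$ is $1$-Lipschitz and $g \mapsto \inprod{\theta}{\ProjK g}$ is $\ltwo{\theta}$-Lipschitz via non-expansiveness of the projection (plus Cauchy--Schwarz for the second), and then invoke Borell's Gaussian concentration theorem. No gaps to report.
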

\noindent 

For future reference, we also note that tail
bound~\eqref{EqnLipschitz1} implies that the variance is bounded as
\begin{align}
  \label{EqnVarianceBound}
  \var(Z(\theta)) & = \int_0^\infty \Prob \Big( \big| Z(\theta) -
  \Exs[Z(\theta)] \big| \geq \sqrt{u} \Big) du \; \leq \;
  2 \int_0^\infty e^{-u/2} du \; = \; 4.
\end{align}

To prove Lemma~\ref{LemConcentration}, given every vector $\theta$, we claim that the function $g \mapsto
\ltwo{\ProjKcone(\theta + g)}$ is $1$-Lipschitz, whereas the function
\mbox{$g \mapsto \inprod{\theta}{\ProjKcone g}$} is a
$\ltwo{\theta}$-Lipschitz function.  From these claims, the
concentration results then follow from Borell's
theorem~\cite{borell1975brunn}.

In order to establish the Lipschitz property, consider two vectors $g,
g' \in \real^\usedim$.  By the triangle inequaliuty non-expansiveness
of Euclidean projection, we have
\begin{align*} 
\Big| \ltwo{\ProjKcone(\theta + g)} - \ltwo{\ProjKcone(\theta + g')}
\Big| & \leq \ltwo{\ProjKcone(\theta + g) - \ProjKcone(\theta + g')}
\; \leq \; \ltwo{g - g'}.
\end{align*}
Combined with the Cauchy-Schwarz inequality, we conclude that
\begin{align*}
  \big|\inprod{\theta}{\ProjK g} - \inprod{\theta}{\ProjK g'}\big|
        \leq \ltwo{\theta} \; \ltwo{\ProjK g - \ProjK g' } \leq
        \ltwo{\theta} \; \ltwo{g - g'},
\end{align*}
which completes the proof of Lemma~\ref{LemConcentration}.

%%%%%%%%%%%%%%%%%%%%%%%%%%%%%%%%%%%%%%%%%%%%%%%%%%%%%%%%%%%%%%%%%%%%%%%%%%%%%%%%%%%%%%%%%%%%%

\subsection{Proof of inequality~\eqref{Eqnc1.eq}}
\label{AppLemLSEup}

To prove inequality~\eqref{Eqnc1.eq}, we make use of
the following auxiliary Lemma~\ref{LemLSEup}.

% Let us first state our Lemma~\ref{LemLSEup} and give a proof of it. 

\begin{lem} 
\label{LemLSEup}
For every closed convex cone $\Kcone$ and vector $\theta \in \Kcone$, we
have the lower bounds
\begin{subequations}
\begin{align}
\label{Eqngl1.eq}
\Gamma(\theta) & \geq \frac{\ltwo{\theta}^2}{2 \ltwo{\theta} + 8\Exs
  \ltwo{\ProjK g}} -\frac{2}{\sqrt{e}}.
\end{align}
Moreover, for any vector $\theta$ that also satisfies the inequality
$\inprod{\theta}{\Exs \ProjK g} \geq \ltwo{\theta}^2 $, we have
\begin{align}
\label{Eqngl2.eq}
\Gamma(\theta) & \geq \alpha^2(\theta) \frac{\inprod{\theta}{\Exs
    \ProjK g} - \ltwo{\theta}^2} {\alpha(\theta) \ltwo{\theta} + 2
  \Exs \ltwo{\ProjK g}} -\frac{2}{\sqrt{e}},
\end{align}
where $\alpha(\theta) \defn 1 - \exp \left(\frac{-
  \inprod{\theta}{\Exs \ProjK g}^2}{8 \ltwo{\theta}^2} \right)$.
\end{subequations}
\end{lem}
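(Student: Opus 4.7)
The plan is to reduce both bounds to controlling the \emph{second-moment} difference $\Exs\ltwo{w}^2 - \Exs\ltwo{u}^2$, with shorthand $w \defn \ProjK(\theta+g)$ and $u \defn \ProjK g$. The key pointwise observation is that for $\theta \in \Kcone$, the vector $\theta + \ProjK g$ again lies in $\Kcone$ (convex cones are closed under addition), so the distance from $\theta+g$ to the cone satisfies
\begin{align*}
\ltwo{\ProjKstar(\theta+g)} \leq \ltwo{(\theta+g)-(\theta+\ProjK g)} = \ltwo{\ProjKstar g}.
\end{align*}
Combining with Moreau's identity $\ltwo{v}^2 = \ltwo{\ProjK v}^2 + \ltwo{\ProjKstar v}^2$ and $\ltwo{\theta+g}^2 - \ltwo{g}^2 = \ltwo{\theta}^2 + 2\inprod{\theta}{g}$ gives the pointwise inequality $\ltwo{w}^2 - \ltwo{u}^2 \geq \ltwo{\theta}^2 + 2\inprod{\theta}{g}$, so $\Exs\ltwo{w}^2 - \Exs\ltwo{u}^2 \geq \ltwo{\theta}^2$ after averaging.

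To prove~\eqref{Eqngl1.eq} I transfer this second-moment bound to first moments. Since $v \mapsto \ltwo{\ProjK v}$ is $1$-Lipschitz, the variance estimates $\var(\ltwo{w}), \var(\ltwo{u}) \leq 4$ coming from~\eqref{EqnLipschitz1} and~\eqref{EqnVarianceBound} give $(\Exs\ltwo{w})^2 \geq \Exs\ltwo{w}^2 - 4$, while $\Exs\ltwo{u}^2 \geq (\Exs\ltwo{u})^2$ by Jensen. Together with the Lipschitz inequality $\Exs\ltwo{w} \leq \Exs\ltwo{u} + \ltwo{\theta}$, applying the algebraic identity $a-b = (a^2-b^2)/(a+b)$ to $(a,b) = (\Exs\ltwo{w}, \Exs\ltwo{u})$ yields
\begin{align*}
\Gamma(\theta) \geq \frac{\ltwo{\theta}^2 - 4}{2\Exs\ltwo{u} + \ltwo{\theta}}.
\end{align*}
I then rearrange this into the stated form, splitting off the additive remainder and using the universal lower bound $\Exs\ltwo{\ProjK g} \geq (2\pi)^{-1/2}$ from~\eqref{EqnConstLower} to absorb it into the clean constant $-2/\sqrt{e}$; the small-$\ltwo{\theta}$ regime is handled separately via the trivial Lipschitz bound $|\Gamma(\theta)| \leq \ltwo{\theta}$.

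For~\eqref{Eqngl2.eq} I additionally exploit the alignment $\mu \defn \inprod{\theta}{\Exs\ProjK g} \geq \ltwo{\theta}^2$. Instead of comparing to the single element $\theta + \ProjK g$, I use the variational characterization $\ltwo{w}^2 \geq 2\inprod{\theta+g}{v_c} - \ltwo{v_c}^2$ with the family $v_c = u + c\theta \in \Kcone$ ($c \geq 0$, valid since cones are closed under addition and positive scaling). Expanding and invoking $\inprod{g}{u} = \ltwo{u}^2$ gives
\begin{align*}
\ltwo{w}^2 - \ltwo{u}^2 \geq (2-2c)\inprod{\theta}{u} + (2c-c^2)\ltwo{\theta}^2 + 2c\inprod{g}{\theta};
\end{align*}
taking expectations and optimizing over $c \geq 0$ under the hypothesis $\mu \geq \ltwo{\theta}^2$ improves the second-moment inequality to $\Exs\ltwo{w}^2 - \Exs\ltwo{u}^2 \geq 2\mu$. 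I then restrict the second-moment-to-first-moment transfer to the alignment event $A \defn \{\inprod{\theta}{\ProjK g} \geq \tfrac12 \mu\}$, which by the Lipschitz concentration~\eqref{EqnLipschitz2} applied to the $\ltwo{\theta}$-Lipschitz map $g \mapsto \inprod{\theta}{\ProjK g}$ has probability at least $\alpha(\theta)$; this upgrades the numerator to $\mu - \ltwo{\theta}^2$ and tightens the effective denominator to $\alpha(\theta)\ltwo{\theta} + 2\Exs\ltwo{u}$, because the Lipschitz excess is accumulated only on $A$. The factor $\alpha^2(\theta)$ and the $-2/\sqrt{e}$ correction both emerge from accounting for the residual mass $1-\alpha(\theta)$ via the same subgaussian tail bound.

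The main obstacle is the bookkeeping in the second bound: the $\alpha^2(\theta)$ prefactor and the appearance of $\alpha(\theta)$ only in front of $\ltwo{\theta}$ (and not in front of $\Exs\ltwo{u}$) in the denominator both reflect a delicate partition of expectations around the alignment event $A$, with variance and Lipschitz constants rescaled by $\Prob(A)$. By contrast, once the cone-addition observation behind the pointwise second-moment bound is recognized, the first inequality is a routine application of Cauchy--Schwarz and the constant-variance concentration furnished by Lemma~\ref{LemConcentration}.
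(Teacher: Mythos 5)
Your pointwise inequality $\ltwo{\ProjK(\theta+g)}^2 - \ltwo{\ProjK g}^2 \geq \ltwo{\theta}^2 + 2\inprod{\theta}{g}$ (and the variant $\geq 2\inprod{\theta}{\ProjK g}$ from the competitor $v=\ProjK g$) is correct and coincides with inequality (i) of the paper's Lemma~\ref{LemCuteBasic}, and invoking \eqref{EqnLipschitz2} to get $\Prob(A)\geq \alpha(\theta)$ is the right ingredient. The gap is in how you pass from squared norms to $\Gamma(\theta)$. For \eqref{Eqngl1.eq}, your variance-based transfer yields $(\Exs\ltwo{\ProjK(\theta+g)})^2-(\Exs\ltwo{\ProjK g})^2\geq \ltwo{\theta}^2-4$, hence $\Gamma(\theta)\geq (\ltwo{\theta}^2-4)/(\ltwo{\theta}+2\Exs\ltwo{\ProjK g})$ once $\ltwo{\theta}\geq 2$; the lost additive term $4/(\ltwo{\theta}+2\Exs\ltwo{\ProjK g})$ can be as large as about $5$, far exceeding the available slack $2/\sqrt{e}\approx 1.21$. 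For small-width cones this leaves a genuine hole: take $\Kcone$ a ray, so $\Exs\ltwo{\ProjK g}=1/\sqrt{2\pi}$, and $\ltwo{\theta}=1.5$; the right-hand side of \eqref{Eqngl1.eq} is about $-0.85$, your trivial bound gives only $\Gamma(\theta)\geq -1.5$, and your moment-transfer bound (numerator now negative, denominator only bounded below by roughly $0.8$) gives about $-2$. So neither of your two regimes covers roughly $\ltwo{\theta}\in(1,2)$, and you never establish a substitute such as $\Gamma(\theta)\geq 0$. The paper avoids this loss by performing the $(a^2-b^2)/(a+b)$ step pointwise rather than in expectation: it writes $\Diff=\DiffPlus-\DiffNeg$, bounds $\Exs \DiffNeg \leq \ltwo{\theta}e^{-\ltwo{\theta}^2/8}\leq 2/\sqrt{e}$ using $\Diff\geq -\ltwo{\theta}$ and the Gaussian tail of $\inprod{\theta}{g}$, and lower bounds $\Exs\DiffPlus$ by dividing the squared gain by the pointwise bound $\ltwo{\ProjK(\theta+g)}+\ltwo{\ProjK g}\leq \ltwo{\theta}+2\ltwo{\ProjK g}$ before any averaging, so no variance term ever appears.

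For \eqref{Eqngl2.eq} the proposal is essentially an assertion of the answer's shape rather than a proof: the statements that restricting to $A=\{\inprod{\theta}{\ProjK g}\geq \frac{1}{2}\inprod{\theta}{\Exs\ProjK g}\}$ ``upgrades the numerator to $\inprod{\theta}{\Exs\ProjK g}-\ltwo{\theta}^2$'', ``tightens the effective denominator to $\alpha(\theta)\ltwo{\theta}+2\Exs\ltwo{\ProjK g}$'', and that $\alpha^2(\theta)$ and the $-2/\sqrt{e}$ ``emerge from the residual mass'' are precisely what has to be proved, and the mechanism you name (your expectation-level variance transfer restricted to an event) is never carried out; in particular the optimized global bound $\Exs\ltwo{\ProjK(\theta+g)}^2-\Exs\ltwo{\ProjK g}^2\geq 2\inprod{\theta}{\Exs\ProjK g}$ cannot simply be localized to $A$. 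The concrete chain (the paper's) is different: on the event the squared gain is at least $\inprod{\theta}{\Exs\ProjK g}-\ltwo{\theta}^2\geq 0$; divide pointwise by $\ltwo{\theta}+2\ltwo{\ProjK g}$, apply Jensen conditionally on the event to get $\Exs\DiffPlus \geq \Prob(A)\,(\inprod{\theta}{\Exs\ProjK g}-\ltwo{\theta}^2)\big/\big(\ltwo{\theta}+2\Exs\ltwo{\ProjK g}/\Prob(A)\big)$, clear denominators to produce the prefactor $\Prob(A)^2$ and the term $\Prob(A)\ltwo{\theta}$, and then use $\Prob(A)\geq \alpha(\theta)$ with monotonicity in $\Prob(A)$; the $-2/\sqrt{e}$ comes from the same $\Exs\DiffNeg$ bound as in the first part, not from the residual mass of $A$. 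Until you supply this (or an equivalent) derivation and close the intermediate window noted above, the lemma is not established.
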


We now use Lemma \ref{LemLSEup} to prove the lower bound~\eqref{Eqnc1.eq}.
Note that the inequality $\ltwo{\theta}^2
\geq \upconst \NEWEPSCRITSQ$ implies that one of the following two
lower bounds must hold:
\begin{subequations}
\begin{align}
\label{Eqnga1}
\ltwo{\theta}^2 &\geq \upconst \Exs \ltwo{\ProjK g},\\
\label{Eqnga2}
\text{ or } ~~~\inprod{\theta}{\Exs \ProjK g} &\geq \sqrt{\upconst}
\Exs \ltwo{\ProjK g}.
\end{align} 
\end{subequations}
We will analyze these two cases separately.

\paragraph{Case 1} In order to show that the lower
bound~\eqref{Eqnga1} implies inequality~\eqref{Eqnc1.eq}, we will
prove a stronger result---namely, that the inequality $\ltwo{\theta}^2
\geq \sqrt{\upconst} \Exs\ltwo{\ProjK g}/2$ implies that
inequality~\eqref{Eqnc1.eq} holds.

From the lower bound~\eqref{Eqngl1.eq} and the fact that, for each
fixed $a > 0$, the function $x \mapsto x^2/(2x + a)$ is increasing on
the interval $[0, \infty)$, we find that
\begin{align*}
  \Gamma(\theta) \geq \frac{\sqrt{\upconst\Exs \ltwo{\ProjK g}}/2}{
    \sqrt{2}\upconst^{1/4} + 8 \sqrt{\Exs \ltwo{\ProjK g}}} -
  \frac{2}{\sqrt{e}}.
\end{align*}
Further, because of general bound \eqref{EqnConstLower} that $\Exs
\ltwo{\ProjK g} \geq 1/\sqrt{2\pi}$ and the fact that the function $x
\mapsto x/(a + x)$ is increasing in $x$, we obtain
\begin{align*}
  \Gamma(\theta) \geq \frac{\sqrt{\upconst}}{2(8\pi \upconst)^{1/4} +
    16} - \frac{2}{\sqrt{e}},
\end{align*}
which ensures inequality \eqref{Eqnc1.eq}.

\paragraph{Case 2}
We now turn to the case when
inequality~\eqref{Eqnga2} is satisfied. We may assume the inequality
$\ltwo{\theta}^2 \geq \sqrt{\upconst} \Exs\ltwo{\ProjK g}/2$ is
violated because otherwise, inequality~\eqref{Eqnc1.eq} follows
immediately.  When this inequality is violated, we have
\begin{align}
\label{Eqnub.2}
\inprod{\theta}{\Exs \ProjK g } \geq \sqrt{\upconst} \Exs \ltwo{\ProjK
  g} ~~ \text{ and } ~~ \ltwo{\theta}^2 < \sqrt{\upconst} \Exs
\ltwo{\ProjK g}/2.
\end{align}

Our strategy is to make use of inequality~\eqref{Eqngl2.eq}, and we
begin by bounding the quantity $\alpha$ appearing therein.  By
combining inequality~\eqref{Eqnub.2} and inequality \eqref{EqnConstLower}---namely, 
$\Exs \ltwo{\ProjK g} \geq 1/\sqrt{2\pi}$, we
find that
\begin{align*}
  \alpha \geq 1 - \exp \left(- \frac{\sqrt{\upconst} \Exs \ltwo{\ProjK
      g}}{4} \right) \geq 1 - \exp \left(-
  \frac{\sqrt{\upconst}}{4\sqrt{2\pi}} \right) \geq 1/2, ~~\text{ whenever
  } \upconst \geq 32 \pi.
\end{align*}
Using expression \eqref{Eqnub.2}, we deduce that
\begin{align*}
  \Gamma(\theta) \; \geq \; \frac{\alpha^2\sqrt{\upconst \Exs
      \ltwo{\ProjK g}}}{\alpha(4 \upconst)^{1/4} + 4 \sqrt{\Exs
      \ltwo{\ProjK g}}} - \sqrt{\frac{2}{e}}
  \; \geq \; \frac{\sqrt{\upconst \Exs \ltwo{\ProjK g}}}{(2^6
    \upconst)^{1/4} + 16 \sqrt{\Exs \ltwo{\ProjK g}}} -
  \sqrt{\frac{2}{e}}.
\end{align*}
where the second inequality uses the previously obtained lower bound
$\alpha > 1/2$, and the fact that the function $x \mapsto x^2/(x + b)$
is increasing in $x$.  

This completes the proof of inequality~\eqref{Eqnc1.eq}.

%%%%%%%%%%%%%%%%%%%%%%%%%%%%%%%%%%%%%%%%%%%%%%%%%%%%%%%%%%%%%%%%%%%%%%%%%%%%%%%%%%%%%%%%%%%%%

\paragraph{Proof of Lemma~\ref{LemLSEup}}
Now it is only left for us to prove Lemma~\ref{LemLSEup}.
We define the random variable $\Diff \defn \ltwo{\ProjK (\theta + g)}
- \ltwo{\ProjK g}$, as well as its positive and negative parts
$\DiffPlus = \max \{0, \Diff \}$ and $\DiffNeg = \max \{0, -\Diff \}$,
so that $\Gamma(\theta) = \Exs \Diff = \Exs \DiffPlus - \Exs
\DiffNeg$. Our strategy is to bound $\Exs \DiffNeg$ from above and
then bound $\Exs \DiffPlus$ from below.  The following auxiliary lemma
is useful for these purposes:\\
\begin{lem}
\label{LemCuteBasic}
For every closed convex cone $\Kcone \subset \real^\usedim$ and vectors $x
\in \Kcone$ and $y \in \real^\usedim$, we have:
\begin{align}
\label{EqncuteBasic1} 
\Big| \ltwo{\ProjKcone(x + y)} - \ltwo{\ProjKcone(y)} \Big| & \leq \ltwo{x},
\qquad \mbox{and} \\
\label{EqncuteBasic2}
\max \left\{ 2\inprod{x}{y} + \ltwo{x}^2, \, 2\inprod{x}{\ProjKcone y}
- \ltwo{x}^2 \right\} & \stackrel{(i)}{\leq} \ltwo{\ProjKcone(x +
  y)}^2 - \ltwo{\ProjKcone(y)}^2 \; \stackrel{(ii)}{\leq}
2\inprod{x}{\ProjKcone \yvec} + \ltwo{x}^2.
\end{align}
\end{lem}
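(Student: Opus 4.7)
The plan is to fix $p = \ProjKcone(y)$ and $q = \ProjKcone(x+y)$ and exploit three standard facts about projection onto a closed convex cone: (a) since $\Kcone$ is a convex cone and both $x$ and $p$ lie in $\Kcone$, the sum $x + p$ also lies in $\Kcone$; (b) the cone identity $\ltwo{\ProjKcone(v)}^2 = \inprod{\ProjKcone(v)}{v}$, which in particular yields $\ltwo{q}^2 = \inprod{q}{x+y}$ and $\inprod{p}{y} = \ltwo{p}^2$; and (c) the variational inequality $\inprod{v - \ProjKcone(v)}{u} \leq 0$ for every $u \in \Kcone$. Inequality~\eqref{EqncuteBasic1} then follows immediately: the non-expansiveness of Euclidean projection onto any closed convex set, combined with the reverse triangle inequality, gives $|\ltwo{q} - \ltwo{p}| \leq \ltwo{q - p} \leq \ltwo{(x+y) - y} = \ltwo{x}$.

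For the two-sided estimate~\eqref{EqncuteBasic2}, I will establish the three pieces separately. The upper bound comes from combining (b) and (c) at $u = q$: the inequality $\inprod{y - p}{q} \leq 0$ yields $\inprod{q}{y} \leq \inprod{q}{p}$, so $\ltwo{q}^2 = \inprod{q}{x+y} \leq \inprod{q}{x + p} \leq \ltwo{q}\,\ltwo{x+p}$ by Cauchy--Schwarz; dividing through by $\ltwo{q}$ (and handling $q = 0$ trivially) gives $\ltwo{q}^2 \leq \ltwo{x+p}^2$, which on expansion is exactly the claimed upper bound. For the first lower bound, fact (a) allows me to use $x + p$ as a competing element of $\Kcone$ in the variational definition of $q$, so $\ltwo{(x+y) - q}^2 \leq \ltwo{(x+y) - (x+p)}^2 = \ltwo{y - p}^2$; applying Moreau's identity to both $x+y$ and $y$, this rearranges to $\ltwo{q}^2 - \ltwo{p}^2 \geq \ltwo{x+y}^2 - \ltwo{y}^2 = 2\inprod{x}{y} + \ltwo{x}^2$. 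For the second lower bound, applying (c) at $u = p$ gives $\inprod{q}{p} \geq \inprod{x+y}{p} = \inprod{x}{p} + \ltwo{p}^2$; substituting into the trivial bound $\ltwo{q-p}^2 \geq 0$ produces $\ltwo{q}^2 \geq 2\inprod{q}{p} - \ltwo{p}^2 \geq 2\inprod{x}{p} + \ltwo{p}^2$, which is in fact sharper than the stated $\ltwo{q}^2 - \ltwo{p}^2 \geq 2\inprod{x}{p} - \ltwo{x}^2$.

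No individual step is technically subtle; the main challenge will be bookkeeping, namely keeping straight which of the three projection characterizations to invoke and in which direction. The upper bound is the most delicate of the three pieces, since it mixes the self-inner-product identity (which holds for $q$ but not automatically for $p + x$) with a variational inequality at a nontrivial element of $\Kcone$, and it is the only step that requires a Cauchy--Schwarz estimate rather than a direct Moreau rearrangement.
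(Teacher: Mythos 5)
Your proof is correct in every step, and it takes a route that is noticeably different from the paper's, even though both arguments are elementary. The paper's proof works almost entirely on the polar side: it rewrites both terms via the Moreau decomposition $z = \ProjKcone(z) + \ProjKconeStar(z)$, expands $\ltwo{x + y - \ProjKconeStar(x+y)}^2$, and then exploits the distance-minimizing property of $\ProjKconeStar$ three separate times (once each for (ii) and the two halves of (i)), plus a Cauchy--Schwarz combined with the $1$-Lipschitz property of $\ProjKconeStar$. You instead stay on the primal side throughout, invoking only the identity $\ltwo{\ProjKcone v}^2 = \inprod{\ProjKcone v}{v}$, the variational inequality $\inprod{v - \ProjKcone v}{u} \le 0$ for $u \in \Kcone$, and closure of $\Kcone$ under addition. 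The most interesting divergence is your treatment of the second term in the lower bound: by combining $\inprod{(x+y)-q}{p}\le 0$ with the trivial $\ltwo{q-p}^2 \ge 0$, you obtain $\ltwo{q}^2 - \ltwo{p}^2 \ge 2\inprod{x}{\ProjKcone y}$, which strictly sharpens the paper's bound of $2\inprod{x}{\ProjKcone y} - \ltwo{x}^2$ and dispenses with the Cauchy--Schwarz/Lipschitz step entirely. Your upper-bound argument, which packages the comparison $\ltwo{q} \le \ltwo{x+p}$ via a single application of Cauchy--Schwarz, is also a cleaner and more memorable mechanism than the paper's polar-side expansion, at the cost of needing to handle the $q=0$ case separately (which you noted). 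Incidentally, the paper's exposition swaps the labels ``upper bound'' and ``lower bound'' in two spots when describing which part of \eqref{EqncuteBasic2} each calculation establishes; the underlying mathematics is correct, and your version has no such bookkeeping slip.
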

\noindent We return to prove this claim in
Appendix~\ref{AppLemCuteBasic}.

Inequality~\eqref{EqncuteBasic1} implies that $\Diff \geq -
\ltwo{\theta}$ and thus $\Exs \DiffNeg \leq \ltwo{\theta} \mprob
\{\Diff \leq 0\}$.  The lower bound in
inequality~\eqref{EqncuteBasic2} then implies that $\mprob \{\Diff
\leq 0\} \leq \mprob \{\inprod{ \theta}{g} \leq - \ltwo{\theta}^2/2\}
\leq \exp \big(-\frac{\ltwo{\theta}^2}{8} \big)$, whence
\begin{align*}
  \Exs \DiffNeg \leq \ltwo{\theta} \exp
        \left(\frac{-\ltwo{\theta}^2}{8} \right) \leq \sup_{u > 0}
        \left(u e^{-u^2/8} \right) = \frac{2}{\sqrt{e}}.
\end{align*}
Putting together the pieces, we have established the lower bound
\begin{align}
\label{Eqnjg0}
\Exs \Diff \; = \; \Exs \DiffPlus - \Exs \DiffNeg \; \geq \; \Exs
\DiffPlus - \frac{2}{\sqrt{e}}.
\end{align}
The next task is to lower bound the expectation $\Exs \DiffPlus$. By
the triangle inequality, we have
\begin{align*}
\|\ProjK(\theta + g)\|_2 & \leq \|\ProjK(\theta + g) - \ProjK(g)\|_2 +
\|\ProjK(g)\|_2 \\
& \leq \|\theta\|_2 + \|\ProjK(g)\|_2,
\end{align*}
where the second inequality uses non-expansiveness of the projection.
Consequently, we have the lower bound
\begin{align}
\label{Eqnjg}
\Exs \DiffPlus = \Exs \frac{\left(\ltwo{\ProjKcone(\theta + g)}^2 -
  \ltwo{\ProjKcone g}^2 \right)^+}{ \ltwo{\ProjKcone(\theta + g) } +
  \ltwo{\ProjKcone g}} \geq \Exs \frac{\left(\ltwo{\ProjKcone(\theta +
    g) }^2 - \ltwo{\ProjK g}^2\right)^+}{ \ltwo{\theta} + 2
  \ltwo{\ProjK g}} .
\end{align}
Note that inequality~\eqref{EqncuteBasic2}(i) implies two lower bounds
on the difference \mbox{$\ltwo{\ProjKcone(\theta + g)}^2 -
  \ltwo{\ProjKcone g}^2$}. We treat each of these lower bounds in
turn, and show how they lead to inequalities~\eqref{Eqngl1.eq}
and~\eqref{Eqngl2.eq}.

%%%%%%%%%%%%%%%%%%%%%%%%%%%%%%%%%%%%%%%%%%%%%%%%%%%%%%%%%%%%%%%%%%%%%%%%%%%%

\paragraph{Proof of inequality~\eqref{Eqngl1.eq}:}  Inequality~\eqref{Eqnjg} 
and the first lower bound term from
inequality~\eqref{EqncuteBasic2}(i) imply that
\begin{align*}
\Exs \DiffPlus \geq \Exs \frac{\left(2 \inprod{\theta}{g} +
  \ltwo{\theta}^2\right)^+}{ \ltwo{\theta} + 2 \ltwo{\ProjK g}} \geq
\Exs \frac{\ltwo{\theta}^2}{\ltwo{\theta} + 2 \ltwo{\ProjK g}} \Ind
\{\inprod{\theta}{g} \geq 0\}.
\end{align*}
Jensen's inequality (and the fact that $\mprob \{\inprod{\theta}{g}
\geq 0\} = 1/2$) now allow us to deduce
\begin{align*}
 \Exs \DiffPlus \geq \mprob \left\{\inprod{\theta}{g} \geq 0 \right\}
 \ltwo{\theta}^2 \left(\ltwo{\theta} + \frac{2 \Exs \ltwo{\ProjK g}}{P
   \left\{\inprod{\theta}{g} \geq 0 \right\}} \right)^{-1} =
 \frac{\ltwo{\theta}^2}{2\ltwo{\theta} + 8 \Exs \ltwo{\ProjK g}}
\end{align*}
and this gives inequality \eqref{Eqngl1.eq}.

%%%%%%%%%%%%%%%%%%%%%%%%%%%%%%%%%%%%%%%%%%%%%%%%%%%%%%%%%%%%%%%%%%%%%%%%%%%%%%%%%%%%%%

\paragraph{Proof of inequality~\eqref{Eqngl2.eq}:}

Putting inequality~\eqref{Eqnjg}, the second term on the left hand side of inequality~\eqref{EqncuteBasic2}(i), along with the fact
that $\inprod{\theta}{\Exs \ProjK g } \geq \ltwo{\theta}^2$ together guarantees that
\begin{align*}
 \Exs \DiffPlus \geq \Exs \frac{\left(2 \inprod{\theta}{ \ProjK g } -
   \ltwo{\theta}^2\right)^+}{ \ltwo{\theta} + 2 \ltwo{\ProjK g}} \geq
 \Exs \frac{\inprod{\theta}{ \Exs \ProjK g } - \ltwo{\theta}^2}{
   \ltwo{\theta} + 2 \ltwo{\ProjK g}} \; \; \Ind \left\{\inprod{\theta}{
   \ProjK g } > \frac{1}{2} \inprod{\theta}{ \Exs \ProjK g }\right\}.
\end{align*}
Now introducing the event $\Sevent \defn \big \{ \inprod{ \theta}{
  \ProjK g } > \inprod{ \theta}{ \Exs \ProjK g }/2 \big \}$, Jensen's
inequality implies that
\begin{align}
\label{Eqnnm}
\Exs \DiffPlus \geq \Prob(\Sevent) \; \Exs \frac{\inprod{\theta}{\Exs
    \ProjK g } - \ltwo{\theta}^2}{ \ltwo{\theta} + 2 \frac{\Exs
    \ltwo{\ProjK g}}{\Prob(\Sevent)}}.
\end{align}
The concentration
inequality~\eqref{EqnLipschitz2} from Lemma~\ref{LemConcentration}
gives us that
\begin{align}
\label{Eqnpsb}
\Prob(\Sevent) \geq \Prob \left\{\inprod{\theta}{\ProjK g} >
\frac{1}{2} \inprod{\theta}{\Exs \ProjK g }\right\} \geq 1 - \exp
\left(- \frac{\inprod{\theta}{\Exs \ProjK g }^2}{8 \ltwo{\theta}^2}
\right).
\end{align}
Inequality~\eqref{Eqngl2.eq} now follows by combining
inequalities~\eqref{Eqnjg0}, ~\eqref{Eqnnm} and~\eqref{Eqnpsb}.

%%%%%%%%%%%%%%%%%%%%%%%%%%%%%%%%%%%%%%%%%%%%%%%%%%%%%%%%%%%%%%%%%%%%%%%%%%%%%%%%%%%%%%%%

\subsection{Proof of Lemma~\ref{LemCuteBasic}}
\label{AppLemCuteBasic}

It remains to prove Lemma~\ref{LemCuteBasic}.
Inequality~\eqref{EqncuteBasic1} is a standard Lipschitz property of
projection onto a closed convex cone. Turning to
inequality~\eqref{EqncuteBasic2}, recall the polar cone $\starK \defn
\{z \, \mid \, \inprod{z}{\theta} \leq 0, ~ \forall ~ \theta \in
\Kcone \}$, as well as the Moreau
decomposition~\eqref{EqnMoreau}---namely, $z = \ProjKcone(z) +
\ProjKconeStar(z)$. Using this notation, we have
\begin{align*}
\ltwo{\ProjKcone(x + y)}^2 - \ltwo{\ProjK \yvec}^2 & = \ltwo{x + \yvec -
  \ProjKconeStar(x + y)}^2 - \ltwo{y - \ProjKconeStar y}^2\\
& = \ltwo{x}^2 + 2 \inprod{x}{y - \ProjKconeStar(x +y)} + \ltwo{y -
  \ProjKconeStar(x + y)}^2 - \ltwo{y - \ProjKconeStar y}^2.
\end{align*}
Since $\ProjKconeStar(y)$ is the closest point in $\starK$ to $y$, we have
$\ltwo{y - \ProjKconeStar(x + y)} \geq \ltwo{y - \ProjKconeStar(y)}$, and hence
\begin{align}
\label{Eqnbll}
\ltwo{\ProjKcone(x + y)}^2 - \ltwo{\ProjK \yvec}^2 & \geq \ltwo{x}^2 + 2
\inprod{x}{ \yvec - \ProjKconeStar(x +y)}.
\end{align}
Since $x \in K$ and $\ProjKconeStar(x + y) \in \starK$, we have
$\inprod{x}{\ProjKconeStar(x + y)} \leq 0$, and hence,
inequality~\eqref{Eqnbll} leads to the bound (i) in
equation~\eqref{EqncuteBasic2}. In order to establish inequality (ii)
in equation~\eqref{EqncuteBasic2}, we begin by rewriting
expression~\eqref{Eqnbll} as
\begin{align*}
 \ltwo{\ProjKcone(x + y)}^2 - \ltwo{\ProjK \yvec}^2 \geq \ltwo{x}^2 + 2
 \inprod{x}{y - \ProjKconeStar \yvec } + 2 \inprod{x}{\ProjKconeStar \yvec -
   \ProjKconeStar(x + y)}.
\end{align*}
Applying the Cauchy-Schwarz inequality to the final term above and
using the $1$-Lipschitz property of $z \mapsto \ProjKconeStar z$, we
obtain:
\begin{align*}
\inprod{x}{\ProjKconeStar \yvec - \ProjKconeStar(x + y)} & \geq - \ltwo{x}
\ltwo{\ProjKconeStar \yvec - \ProjKconeStar (x + y)} \geq - \ltwo{x}^2,
\end{align*} 
which establishes the upper bound of inequality~\eqref{EqncuteBasic2}.

Finally, in order to prove the lower bound in
inequality~\eqref{EqncuteBasic2}, we write
\begin{align*}
& \ltwo{\ProjKcone(x + y)}^2 - \ltwo{\ProjK \yvec}^2 \\
= & \ltwo{x + \yvec
  - \ProjKconeStar(x + y)}^2 - \ltwo{x + \yvec - \ProjKconeStar \yvec
  - x}^2 \\
= &\ltwo{x + \yvec - \ProjKconeStar(x + y)}^2 - \ltwo{x + \yvec -
  \ProjKconeStar y}^2 + 2 \inprod{x}{x + \yvec - \ProjKconeStar y} -
\ltwo{x}^2.
\end{align*}
Since the vector $\ProjKconeStar(x+y)$ corresponds to the projection
of $x+y$ onto $\starK$, we have $\ltwo{x+y -\ProjKconeStar(x + y)}
\leq \ltwo{x + \yvec - \ProjKconeStar y}$ and thus
\begin{align*}
\ltwo{\ProjKcone(x + y)}^2 - \ltwo{\ProjK \yvec}^2 & \leq \ltwo{x}^2 +
2 \inprod{x}{\ProjK \yvec},
\end{align*}
which completes the proof of inequality~\eqref{EqncuteBasic2}.

%%%%%%%%%%%%%%%%%%%%%%%%%%%%%%%%%%%%%%%%%%%%%%%%%%%%%%%%%%%%%%%%%%%%%%%%%%%%%%%%%%%%%%

\section{Auxiliary proofs for Theorem 1 (b)}
\label{AppThmGLRTB}

In this appendix, we collect the proofs of lemmas involved in the
proof of Theorem~\ref{ThmGLRT}(b), corresponding to the lower bound
on the GLRT performance.

%%%%%%%%%%%%%%%%%%%%%%%%%%%%%%%%%%%%%%%%%%%%%%%%%%%%%%%%%%%%%%%%%%%%%%%%%%%%%%%%%%%%%%%%

\subsection{Proof for scenario $\Exs \ltwo{\ProjK g} < 128$}
\label{AppScene2}

When $\Exs \ltwo{\ProjK g} < 128$, we begin by
setting $\lowconst = \frac{1}{256}$.  The assumed bound $\epsilon^2
\leq \frac{1}{256} \NEWEPSCRITSQ$ then implies that
\begin{align*}
\epsilon^2 \leq \frac{1}{256} \NEWEPSCRITSQ\leq \frac{\Exs
  \ltwo{\ProjK g}}{256} < \frac{1}{2}.
\end{align*}
For every $\epsilon^2 \leq \frac{1}{2}$, we claim that
$\UNIERR(\testglrt; \{0\}, \Kcone, \epsilon) \geq 1/2$.  Note
that the uniform error $\UNIERR(\testglrt; \{0\}, \Kcone, \epsilon)$ is at least as large as the error in
the simple binary test
\begin{subequations}
\begin{align} 
\label{EqnSimpleTesting}
\Hyp_0: \yvec \sim \NORMAL(0, \EYE{\usedim}) ~~\text{ versus
}~~ \Hyp_1: \yvec \sim \NORMAL(\theta, \EYE{\usedim}),
\end{align}
where $\theta \in \Kcone$ is any vector such that $\ltwo{\theta} =
\epsilon$.  We claim that the error for the simple binary
test~\eqref{EqnSimpleTesting} is lower bounded as
\begin{align}
\label{EqnBinaryLower}
\inf_{\psi} \UNIERR(\psi; \{0\}, \{\theta\}, \epsilon) & \geq 1/2
\qquad \mbox{whenever $\epsilon^2 \leq 1/2$.}
\end{align}
\end{subequations}
The proof of this claim is straightforward: introducing the shorthand
$\Prob_\theta = \NORMAL(\theta, \EYE{\usedim})$ and $\Prob_0
= \NORMAL(0, \EYE{\usedim})$, we have
\begin{align*}
\inf_{\psi} \UNIERR(\psi; \{0\}, \{\theta\}, \epsilon) & = 1 -
\tvnorm{\mprob_\theta - \mprob_0}.
\end{align*}
Using the relation between $\chi^2$ distance and TV-distance in
expression~\eqref{EqnChi-TV} and the fact that
\mbox{$\chi^2(\Prob_\theta , \Prob_0) = \exp(\epsilon^2)-1$,} we find
that the testing error satisfies
\begin{align*}
  \inf_{\psi} \UNIERR(\psi; \{0\}, \{\theta\}, \epsilon) \geq 1 -
  \frac{1}{2} \sqrt{\exp(\epsilon^2)-1} \geq 1/2, \qquad{
    \text{whenever } \epsilon^2 \leq 1/2}.
\end{align*}
(See Section~\ref{SecTv-Chi} for more details on the relation between
the TV and \mbox{$\chi^2$-distances.)} This completes the proof under
the condition $\Exs \ltwo{\ProjK g} < 128$.

%%%%%%%%%%%%%%%%%%%%%%%%%%%%%%%%%%%%%%%%%%%%%%%%%%%%%%%%%%%%%%%%%%%%%%%%%%%%%%%%%%%%%%%%

\subsection{Proof of Lemma~\ref{LemLatte}}
\label{AppLemLatte}

Let us first state Lemma~\ref{LemLatte} and give a proof of it. 
\begin{lem}
\label{LemLatte}
For any constant $a \geq 1$ and for every closed convex cone $\Kcone
\neq \{0 \}$, we have
\begin{subequations}
\begin{align} 
\label{EqnLatte}
0 \leq \Gamma(\theta) \leq \frac{2a \ltwo{\theta}^2 + 4
  \inprod{\theta}{\Exs \ProjK g}}{\Exs \ltwo{\ProjK g}} + b\ltwo{\theta} \qquad
\mbox{for all $\theta \in \Kcone$,}
\end{align}
where
\begin{align}
b \defn 3 \exp( - \frac{(\Exs\ltwo{\ProjK g})^2}{8}) + 24 \exp( -
\frac{a^2 \ltwo{\theta}^2}{16}).
\end{align}
\end{subequations}
\end{lem}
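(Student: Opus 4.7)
The plan is to treat the two bounds separately; the upper bound is the substantive one. The starting point for the upper bound is the factorization
\begin{align*}
\ltwo{\ProjK(\theta + g)} - \ltwo{\ProjK g} \; = \; \frac{\ltwo{\ProjK(\theta+g)}^2 - \ltwo{\ProjK g}^2}{\ltwo{\ProjK(\theta+g)} + \ltwo{\ProjK g}},
\end{align*}
combined with the upper bound $\ltwo{\ProjK(\theta+g)}^2 - \ltwo{\ProjK g}^2 \leq 2 \inprod{\theta}{\ProjK g} + \ltwo{\theta}^2$ on the numerator, which is inequality~\eqref{EqncuteBasic2}(ii) of Lemma~\ref{LemCuteBasic}. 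The key observation is that $\ltwo{\ProjK g}$ (a lower bound for the denominator) and $\inprod{\theta}{\ProjK g}$ (appearing in the numerator) both concentrate tightly around their respective means, by virtue of the Gaussian concentration inequalities~\eqref{EqnLipschitz1} and~\eqref{EqnLipschitz2} of Lemma~\ref{LemConcentration}.

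Guided by this, I will split the expectation defining $\Gamma(\theta)$ using two ``good'' events
\begin{align*}
\mathcal{A} \;\defn\; \Bigl\{ \ltwo{\ProjK g} \geq \tfrac{1}{2}\Exs\ltwo{\ProjK g} \Bigr\}
\qquad \text{and} \qquad
\mathcal{B} \;\defn\; \Bigl\{ \inprod{\theta}{\ProjK g} \leq \inprod{\theta}{\Exs\ProjK g} + c\,a\,\ltwo{\theta}^2 \Bigr\},
\end{align*}
for a suitable constant $c$ to be chosen. On $\mathcal{A}\cap\mathcal{B}$ the denominator is at least $\tfrac{1}{2}\Exs\ltwo{\ProjK g}$ and the numerator is at most $2\inprod{\theta}{\Exs\ProjK g} + (2ca+1)\ltwo{\theta}^2$, which produces a contribution of the same form as the leading term $\frac{2a\ltwo{\theta}^2 + 4\inprod{\theta}{\Exs\ProjK g}}{\Exs\ltwo{\ProjK g}}$ of the target bound. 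On $\mathcal{A}^c$ I will bound the integrand pointwise by $\ltwo{\theta}$ using the Lipschitz inequality~\eqref{EqncuteBasic1} and then invoke~\eqref{EqnLipschitz1} to get $\Prob(\mathcal{A}^c) \leq \exp(-(\Exs\ltwo{\ProjK g})^2/8)$, which yields the first exponential term in $b$. For the contribution from $\mathcal{A}\cap\mathcal{B}^c$, I will apply Cauchy--Schwarz to extract a factor of $\sqrt{\Prob(\mathcal{B}^c)}$ rather than $\Prob(\mathcal{B}^c)$; the concentration inequality~\eqref{EqnLipschitz2} then gives $\Prob(\mathcal{B}^c) \leq \exp(-c^2 a^2 \ltwo{\theta}^2/2)$, and, for the right choice of $c$, its square root produces the $\exp(-a^2\ltwo{\theta}^2/16)$ factor in $b$.

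The lower bound $\Gamma(\theta) \geq 0$ for $\theta \in \Kcone$ can be established as follows: applying inequality~\eqref{EqncuteBasic2}(i) with $x=\theta$ and $y=g$ and taking expectations (using $\Exs\inprod{\theta}{g}=0$) yields the second-moment inequality $\Exs\ltwo{\ProjK(\theta+g)}^2 \geq \Exs\ltwo{\ProjK g}^2 + \ltwo{\theta}^2$; combined with the variance bound $\var(\ltwo{\ProjK(\cdot)}) \leq 4$ from~\eqref{EqnVarianceBound}, this forces $(\Exs\ltwo{\ProjK(\theta+g)})^2 \geq (\Exs\ltwo{\ProjK g})^2$ once $\ltwo{\theta}^2 \geq 4$, and the residual small-$\ltwo{\theta}$ regime is handled by a direct symmetrization argument exploiting the sublinearity of the support function $\ltwo{\ProjK(\cdot)} = \max_{u \in \Kcone\cap\Sphere{}} \inprod{u}{\cdot}$. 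The main obstacle I anticipate is the careful numerical bookkeeping through the two-event decomposition and the Cauchy--Schwarz step, so that the precise prefactors $2a$, $4$, $3$ and $24$ in the statement emerge rather than merely a bound of the same form with inflated constants; in particular the choice of $c$ must simultaneously calibrate the leading-term coefficient of $\ltwo{\theta}^2$ and the exponent in the second concentration term.
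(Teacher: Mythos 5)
Your two-event skeleton is in the right spirit, but your second event plays a qualitatively different role from the paper's, and that difference is fatal to the constants. The paper's second event is $\Mevent \defn \{\inprod{\theta}{\ProjK g} \geq -a\ltwo{\theta}^2/2\}$, a \emph{lower} bound on $\inprod{\theta}{\ProjK g}$, whose only job is to locate where $(2\inprod{\theta}{\ProjK g} + a\ltwo{\theta}^2)^{+}$ is nontrivial so the positive part can be dropped. After that the paper never bounds $\inprod{\theta}{\ProjK g}$ pointwise from above; it writes $\Exs[\inprod{\theta}{\ProjK g}\,\Ind(\Bevent\cap\Mevent)] = \Exs\inprod{\theta}{\ProjK g} - \Exs[\inprod{\theta}{\ProjK g}\,\Ind(\Mevent^c\cup\Bevent^c)]$ and controls the correction by Cauchy--Schwarz together with the variance bound $\var(\ltwo{\ProjK g})\leq 4$, obtaining $\sqrt{\Prob(\Mevent^c)}$, hence $e^{-a^2\ltwo{\theta}^2/16}$. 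In this way the coefficient $4$ on $\inprod{\theta}{\Exs\ProjK g}$ is exact and the $2a\ltwo{\theta}^2$ comes from the trivial bound $\Exs[a\ltwo{\theta}^2\,\Ind(\cdot)]\leq a\ltwo{\theta}^2$: there is no tuning constant that links the leading term to the concentration exponent.

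Your $\mathcal{B}$ is instead an upper bound $\inprod{\theta}{\ProjK g}\leq \inprod{\theta}{\Exs\ProjK g} + ca\ltwo{\theta}^2$, and you do need to calibrate $c$ on both sides. After dividing by the denominator bound $\tfrac{1}{2}\Exs\ltwo{\ProjK g}$, your $\mathcal{A}\cap\mathcal{B}$ contribution is at most $\bigl(4\inprod{\theta}{\Exs\ProjK g} + (4ca+2)\ltwo{\theta}^2\bigr)/\Exs\ltwo{\ProjK g}$, so matching the stated coefficient $2a$ requires $4ca + 2\leq 2a$, i.e.\ $c\leq \tfrac12 - \tfrac{1}{2a}$, which is strictly less than $\tfrac12$ for every $a\geq 1$ and equals $0$ at $a=1$. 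Meanwhile, getting $\sqrt{\Prob(\mathcal{B}^c)}\lesssim e^{-a^2\ltwo{\theta}^2/16}$ from~\eqref{EqnLipschitz2} requires $c\geq \tfrac12$. These constraints are incompatible for every $a\geq 1$, so the obstacle you flag at the end is not resolvable within your scheme, and your argument cannot deliver the stated exponent $a^2\ltwo{\theta}^2/16$ together with the coefficient $2a$. You should use the paper's event $\Mevent$ instead. Separately, your lower-bound route (second moments plus the variance bound) only handles $\ltwo{\theta}^2\geq 4$, and the ``direct symmetrization'' you invoke for the remaining regime is not a complete argument: sublinearity gives only $\Gamma(\theta)\geq -\ltwo{\ProjK(-\theta)}\geq -\ltwo{\theta}$, and convexity gives $\Gamma(\theta)+\Gamma(-\theta)\geq 0$, neither of which yields $\Gamma(\theta)\geq 0$ by itself. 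The paper establishes $\Gamma(\theta)\geq 0$ directly from the Moreau decomposition $y = \ProjK y + \ProjKconeStar y$ together with $\inprod{\theta}{v}\leq 0$ for $v\in\KconeStar$, and you should adopt that route.
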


In order to prove that $\Gamma(\theta) \geq 0$, we first introduce the
convenient shorthand notation $v_1 \defn \ProjKconeStar (\theta+g)$ and
$v_2 \defn \ProjKconeStar g$. 
Recall that $\KconeStar$ denotes the polar cone of $\Kcone$ defined in expression~\eqref{EqnDefnPolar}.
 With this notation, the the Moreau
decomposition~\eqref{EqnMoreau} then implies that
\begin{align*}
\ltwo{\ProjK (\theta+g)}^2 - \ltwo{\ProjK g}^2 &= \ltwo{\theta + g -
  v_1}^2 - \ltwo{g -v_2}^2 \\
  &= \ltwo{\theta}^2 + 2\inprod{\theta}{g-v_1} + \ltwo{g-v_1}^2 - \ltwo{g-v_2}^2.
\end{align*}
The right hand side above is greater than $\ltwo{\theta}^2 +
2\inprod{\theta}{g-v_1}$ because $\ltwo{g-v_1}^2 \geq
\min_{v\in\Kcone^*} \ltwo{g-v}^2 = \ltwo{g-v_2}^2$. 
From the fact that $\Exs \inprod{\theta}{g} = 0$ and $\inprod{\theta}{v} \leq 0$ for
all $v\in \Kcone^*$, we have $\Gamma(\theta) \geq 0$.

Now let us prove the upper bound for expected difference
$\Gamma(\theta)$.  Using the convenient shorthand notation
\mbox{$\Diff \defn \ltwo{\ProjK (\theta + g)} - \ltwo{\ProjK g }$,} we
define the event
\begin{align*}
\Bevent \defn \{ \ltwo{\ProjK g} \geq \frac{1}{2} \Exs
\ltwo{\ProjKcone g}\}, \qquad{\text{ where } g\sim
  \NORMAL(0,\EYE{\usedim})}.
\end{align*}
Our proof is then based on the decomposition $\Gamma(\theta) \; = \;
\Exs \Diff \; = \; \Exs \Diff \Ind(\Bevent^c) + \Exs \Diff
\Ind(\Bevent)$.  In particular, we upper bound each of these two terms
separately.

\paragraph{Bounding $\Exs[\Diff \Ind(\Bevent^c)]$:}
The analysis of this term is straightforward:
inequality~\eqref{EqncuteBasic1} from Lemma~\ref{LemCuteBasic}
guarantees that $\Diff \leq \ltwo{\theta}$, whence
\begin{align}
\label{EqnDiffTermOne}
\Exs \Diff \Ind(\Bevent^c) \leq \ltwo{\theta}\Prob(\Bevent^c).
\end{align}

\paragraph{Bounding $\Exs[ \Diff \Ind(\Bevent)]$:}
Turning to the second term, we have
\begin{align*}
\Exs \Diff \Ind(\Bevent) & \leq \Exs \DiffPlus \Ind(\Bevent) \\
& = \Exs \frac{\left( \ltwo{\ProjK (\theta + g)}^2 - \ltwo{\ProjKcone
    g}^2\right)^+}{ \ltwo{\ProjK (\theta + g)} + \ltwo{\ProjK g}}
\Ind(\Bevent) \leq \Exs \frac{\left( \ltwo{\ProjK (\theta + g)}^2 -
  \ltwo{\ProjK g}^2 \right)^+}{\ltwo{\ProjK g}} \Ind(\Bevent).
\end{align*}
On event $\Bevent$, we can lower bound quantity $\ltwo{\ProjK g}$ with
$\Exs \ltwo{\ProjK g}/2$ thus
\begin{align} 
\label{EqnWine}
\Exs \frac{\left( \ltwo{\ProjK (\theta + g)}^2 - \ltwo{\ProjK g}^2
  \right)^+}{ \ltwo{\ProjK g}} \Ind(\Bevent) \leq \underbrace{\Exs
  \frac{\left( \ltwo{\ProjK (\theta + g)}^2 - \ltwo{\ProjK g}^2
    \right)^+ \Ind(\Bevent)}{\Exs \ltwo{\ProjK g}/2} }_{\defn \TERMONE
}.
\end{align}

Next we use inequality~\eqref{EqncuteBasic2} to bound the numerator of
the quantity $\TERMONE$, namely
\begin{align*} 
\Exs \left(\ltwo{\ProjK (\theta + g) }^2 - \ltwo{\ProjK g}^2\right)^+
\Ind(\Bevent) 
&\leq \Exs \left(2\inprod{\theta}{\ProjK g} +
\ltwo{\theta}^2\right)^+ \Ind(\Bevent)\\
& \leq \Exs \left(2\inprod{\theta}{\ProjK g} + a
        \ltwo{\theta}^2\right)^+ \Ind(\Bevent),
\end{align*}
for every constant $a \geq 1$. To further simplify notation, introduce
event $\Mevent \defn \{ \theta^T \ProjK g \geq -a \ltwo{\theta}^2/2\}$
and by definition, we obtain
\begin{align}
\label{EqnSalmon}
\Exs \left(2\inprod{\theta}{\ProjK g} + a \ltwo{\theta}^2\right)^+
\Ind(\Bevent)
&= \Exs \left(2\inprod{\theta}{\ProjK g} + a \ltwo{\theta}^2\right)
\Ind(\Bevent \cap \Mevent) \notag \\
& \leq a \ltwo{\theta}^2 + 2\Exs [\inprod{\theta}{\ProjK g}
  \Ind(\Bevent \cap \Mevent)].
\end{align}
The right hand side of inequality~\eqref{EqnSalmon} consists of two
terms.  The first term $a \ltwo{\theta}^2$ is a constant, so that we
only need to further bound the second term $2\Exs
\inprod{\theta}{\ProjK g} \Ind(\Bevent \cap \Mevent)$. We claim that
\begin{align} 
\label{EqnLBTerm2}
\Exs [\inprod{\theta}{\ProjK g} \Ind(\Bevent \cap \Mevent)] \leq \Exs
\inprod{\theta}{\ProjK g} + \ltwo{\theta}\Exs \ltwo{\ProjK g}
(6\sqrt{\Prob(\Mevent^c)} + \Prob(\Bevent^c)/2).
\end{align}
Taking inequality~\eqref{EqnLBTerm2} as given for the moment,
combining inequalities~\eqref{EqnWine}, \eqref{EqnSalmon}
and~\eqref{EqnLBTerm2} yields
\begin{align} 
\label{EqnDinner}
\Exs \DiffPlus \Ind(\Bevent) \leq \TERMONE \leq \frac{2 a
  \ltwo{\theta}^2 + 4 \Exs \inprod{\theta}{\ProjK g}}{\Exs
  \ltwo{\ProjKcone g}} + \ltwo{\theta} (24\sqrt{\Prob(\Mevent^c)} + 2
\Prob(\Bevent^c)).
\end{align}

%%%%%%%%%%%%%%%%%%%%%%%%%%%%%%%%%%%%%%%%%%%%%%%%%%%%%%%%%%%%%%%%%%%%%%%%%%%%%%%%%%%%%%

As a summary of the above two parts---namely inequalities~\eqref{EqnDiffTermOne} and \eqref{EqnDinner}, if we assume
inequality~\eqref{EqnLBTerm2}, we have
\begin{align} 
\label{EqnIntGamma}
\Gamma(\theta) \leq \frac{2 a \ltwo{\theta}^2 + 4\Exs
  \inprod{\theta}{\ProjK g}}{\Exs \ltwo{\ProjK g}} +
\ltwo{\theta}(24\sqrt{\Prob(\Mevent^c)} + 3\Prob(\Bevent^c)).
\end{align}
Based on expression~\eqref{EqnIntGamma}, the last step in proving
Lemma~\ref{LemLatte} is to control the probabilities
$\Prob(\Mevent^c)$ and $\Prob(\Bevent^c)$ respectively.  Using the fact that
$\inprod{\theta}{\ProjK g} = \inprod{\theta}{(g - \ProjKconeStar
    g)} \geq \inprod{\theta}{g}$ and the concentration of $\inprod{\theta}{g}$, we have 
\begin{align*}
\Prob(\Mevent^c) &= \Prob( \inprod{\theta}{\ProjK g} < -\frac{a}{2}
\ltwo{\theta}^2) \leq \Prob(\inprod{\theta}{g} < -\frac{a}{2}
\ltwo{\theta}^2) \leq \exp( - \frac{a^2\ltwo{\theta}^2}{8}),\\
\text{ and }~~~ \Prob(\Bevent^c) &= \Prob(\ltwo{\ProjK g} < 
\frac{1}{2} \Exs \ltwo{\ProjK g}) \leq \exp( - \frac{(\Exs
  \ltwo{\ProjKcone g})^2}{8}).
\end{align*}
where the second inequality follows directly from concentration result in Lemma~\ref{LemConcentration} \eqref{EqnLipschitz1}.
Substituting the above two inequalities into
expression~\eqref{EqnIntGamma} yields Lemma~\ref{LemLatte}.

So it is only left for us to show inequality~\eqref{EqnLBTerm2}.  To
see this, first notice that
\begin{align} 
\label{EqnFix}
\Exs [\inprod{\theta}{\ProjK g}\Ind(\Bevent \cap \Mevent)] = \Exs
\inprod{\theta}{\ProjK g} - \Exs \inprod{\theta}{\ProjK g}
\Ind(\Mevent^c \cup \Bevent^c).
\end{align}
The Cauchy-Schwarz inequality and triangle inequality allow us to
deduce
\begin{align*} 
- \Exs \inprod{\theta}{\ProjK g} \Ind(\Mevent^c \cup \Bevent^c) 
&= \inprod{\theta}{- \Exs [\ProjK g \Ind(\Mevent^c \cup \Bevent^c)]}  \\
&\leq \ltwo{\theta} \ltwo{\Exs [\ProjK g \Ind(\Mevent^c \cup \Bevent^c)]} \\
& \leq \ltwo{\theta} \Big\{ \ltwo{\Exs \ProjK g \Ind(\Mevent^c)} +
\ltwo{\Exs \ProjK g \Ind(\Bevent^c)} \Big \}.
\end{align*}
Jensen's inequality further guarantees that 
\begin{align}
\label{EqnFixCauchy}
  - \Exs \inprod{\theta}{\ProjK g} \Ind(\Mevent^c \cup \Bevent^c) 
  & \leq \ltwo{\theta} \Big\{ \underbrace{ \Exs [\ltwo{\ProjK g}
    \Ind(\Mevent^c)}_{\defn \TERMTWO}] + \underbrace{ \Exs
  [\ltwo{\ProjK g} \Ind(\Bevent^c)}_{\defn \TERMTHREE}] \Big \},
\end{align}

By definition, on event $\Bevent^c$, we have $\ltwo{\ProjK g} \leq \Exs
\ltwo{\ProjK g}/2$, and consequently
\begin{align}
\label{EqnPartiii}
\TERMTHREE \leq \frac{\Exs \ltwo{\ProjK g} \Prob(\Bevent^c)}{2}.
\end{align}
Turning to the quantity $\TERMTWO$, applying Cauchy-Schwartz inequality
yields
\begin{align*} 
\TERMTWO & \leq \sqrt{\Exs\ltwo{\ProjK g}^2 } \sqrt{\Exs \Ind(\Mevent^c)} =
\sqrt{(\Exs\ltwo{\ProjK g})^2 + \var(\ltwo{\ProjK g})}
\sqrt{\Prob(\Mevent^c)}.
\end{align*}
The variance term can be bounded as in inequality~\eqref{EqnVarianceBound}
which says that $\var(\ltwo{\ProjK g}) \leq 4.$

From inequality~\eqref{EqnConstLower}, for every non-trivial cone ($\Kcone \neq
\{0\}$), we are guaranteed that $\Exs\ltwo{\ProjK g} \geq
1/\sqrt{2\pi}$, and hence $\var(\ltwo{ \ProjK g}) \leq 8\pi(\Exs
\ltwo{\ProjK g})^2$.  Consequently, the quantity $\TERMTWO$ can be
further bounded as
\begin{align}
\label{EqnPartii}
\TERMTWO & \leq \sqrt{1+8\pi} \Exs \ltwo{\ProjK g}
\sqrt{\Prob(\Mevent^c)} \leq 6 \Exs \ltwo{\ProjK g}
\sqrt{\Prob(\Mevent^c)}.
\end{align}

Putting together inequalities~\eqref{EqnPartiii}, \eqref{EqnPartii}
and~\eqref{EqnFixCauchy} yields
\begin{align*}
- \Exs [\inprod{\theta}{\ProjK g} \Ind(\Mevent^c \cup (\Mevent \cap
  \Bevent^c))] \leq \ltwo{\theta} \Exs \ltwo{\ProjK g} (6
\sqrt{\Prob(\Mevent^c)} + \Prob(\Bevent^c)/2),
\end{align*}
which validates claim~\eqref{EqnLBTerm2} when combined with inequality
\eqref{EqnFix}.  We finish the proof of Lemma~\ref{LemLatte}.

%%%%%%%%%%%%%%%%%%%%%%%%%%%%%%%%%%%%%%%%%%%%%%%%%%%%%%%%%%%%%%%%%%%%%%%%%%%%%%%%%%%%%%%%

\subsection{Calculate the testing error}
\label{AppCalculate}

\noindent The following lemma allows us to relate $\ltwo{\ProjK g}$ to
its expectation:
\begin{lem} 
\label{LemCLT}
Given every closed convex cone $\Kcone$ such that $\Exs \ltwo{\ProjK
  g} \geq 128$, we have
\begin{align} 
\label{EqnHammerCor}
\Prob(\ltwo{\ProjK g} > \Exs \ltwo{\ProjK g}) \, > \, 7/16.
\end{align} 
\end{lem}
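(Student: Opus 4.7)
The plan is to reduce the claim to a quantitative central limit theorem for the squared projection norm, and then invoke the Stein's-method bound of Goldstein, Nourdin, and Peccati~\cite{goldstein2014gaussian} for conic intrinsic volumes. The intuition is that, when $\mu \defn \Exs \ltwo{\ProjK g}$ is large, the random variable $\ltwo{\ProjK g}^2$ is approximately Gaussian around its mean, so the probability of exceeding that mean should be close to $1/2$. The explicit threshold $\mu \geq 128$ in the hypothesis is calibrated so that the CLT approximation error is small enough to force the desired lower bound of $7/16$.

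\textbf{Step 1 (reduction to the squared statistic).} Set $X \defn \ltwo{\ProjK g}$ and $Y \defn X^2$. Non-negativity of $X$ gives $\{X > \mu\} = \{Y > \mu^2\}$, and Jensen's inequality gives $\mu^2 = (\Exs X)^2 \leq \Exs X^2 = \Exs Y$. Consequently $\{Y > \Exs Y\} \subseteq \{Y > \mu^2\}$, so
\[
\Prob(X > \mu) \; \geq \; \Prob(Y > \Exs Y),
\]
and it suffices to prove $\Prob(Y > \Exs Y) > 7/16$. A convenient observation is that, via the Moreau decomposition $g = \ProjK g + \ProjKstar g$ with orthogonality $\inprod{\ProjK g}{\ProjKstar g} = 0$, one can rewrite $Y = \inprod{g}{\ProjK g}$, exhibiting $Y$ as a quadratic-type Gaussian functional to which Stein's-method CLT bounds apply cleanly.

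\textbf{Step 2 (quantitative CLT for $Y$).} The Goldstein--Nourdin--Peccati framework yields a Kolmogorov-distance estimate of the shape
\[
\sup_{t \in \real} \left| \Prob\left( \frac{Y - \Exs Y}{\sqrt{\var Y}} \leq t \right) - \Phi(t) \right| \; \leq \; \frac{C}{\sqrt{\Exs Y}} \; \leq \; \frac{C}{\mu},
\]
where $\Phi$ is the standard Gaussian cdf and $C$ is a universal constant; the second inequality uses $\Exs Y \geq (\Exs X)^2 = \mu^2$. Evaluating at $t = 0$ and noting $\Phi(0) = 1/2$ yields $|\Prob(Y > \Exs Y) - 1/2| \leq C/\mu$. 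Under the hypothesis $\mu \geq 128$, the right-hand side is at most $C/128$, which, for the constant $C$ arising from~\cite{goldstein2014gaussian}, is strictly less than $1/16$. Therefore $\Prob(Y > \Exs Y) > 1/2 - 1/16 = 7/16$, and together with Step~1 this gives $\Prob(X > \mu) > 7/16$ as required.

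\textbf{Main obstacle.} The delicate point lies entirely in Step~2: verifying that the universal constant $C$ in the CLT bound is small enough to push the Kolmogorov error strictly below $1/16$ at $\mu = 128$. The specific threshold $128$ in the lemma's hypothesis is presumably calibrated against this constant. If the cleanest statement in~\cite{goldstein2014gaussian} is formulated in Wasserstein rather than Kolmogorov distance, an intermediate conversion step, or a direct Berry--Esseen-style argument using the variance control $\var(X) \leq 4$ from Lemma~\ref{LemConcentration} together with the representation $Y = \inprod{g}{\ProjK g}$ and the conic intrinsic volume decomposition, will be required to extract an explicit workable constant.
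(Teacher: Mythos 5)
Your Steps 1 and 2 match the architecture of the paper's proof exactly: reduce to the squared statistic $Y = \ltwo{\ProjK g}^2$ via Jensen's inequality, then invoke a quantitative normal approximation from Goldstein, Nourdin, and Peccati to bound the deviation of $\Prob(Y > \Exs Y)$ from $1/2$. So the approach is the same, not a different route.

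The genuine gap is the one you flag yourself: you never pin down the constant $C$, and the missing ingredient is not a Kolmogorov-versus-Wasserstein bookkeeping issue but a \emph{lower bound on $\var(Y)$}. The paper applies Theorem~2.1 of~\cite{goldstein2014gaussian} in the form
\begin{align*}
\tvnorm{F - Z} \; \leq \; \frac{16 \sqrt{m}}{\tilde\sigma^2},
\qquad F := Y - m,\quad m := \Exs Y,\quad \tilde\sigma^2 := \var(Y),
\end{align*}
which is useless without knowing $\tilde\sigma^2$ is not tiny. The paper then proves $\tilde\sigma^2 \geq 2m$ using the representation of $\ltwo{\ProjK g}^2$ as a mixture of $\chi^2$ distributions: writing $\ltwo{\ProjK g}^2 \stackrel{\text{law}}{=} \sum_{i=1}^{V_\Kcone} X_i$ with the $X_i$ i.i.d.\ $\chi^2_1$ and independent of the mixing variable $V_\Kcone$, the law of total variance gives $\tilde\sigma^2 = \var(V_\Kcone) + 2\,\Exs[\ltwo{\ProjK g}^2] \geq 2m$. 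Plugging this in and then using $\sqrt{m} = \sqrt{\Exs\ltwo{\ProjK g}^2} \geq \Exs\ltwo{\ProjK g} = \mu$ yields $\tvnorm{F-Z} \leq 8/\sqrt{m} \leq 8/\mu \leq 1/16$ when $\mu \geq 128$, and since TV dominates Kolmogorov distance this gives $\Prob(F > 0) \geq 1/2 - 1/16 = 7/16$.

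Your suggested alternative --- a ``direct Berry--Esseen-style argument using the variance control $\var(X) \leq 4$'' --- will not close the gap, because that inequality points the wrong way: you need a \emph{lower} bound on $\var(Y)$, and an \emph{upper} bound on $\var(X)$ says nothing useful. Indeed, $\var(X) \leq 4$ is what makes the random variable concentrated, which if anything works \emph{against} a normal approximation with nontrivial spread; it is only the intrinsic-volume / $\chi^2$-mixture structure that gives the needed lower bound $\var(Y) \geq 2\Exs Y$. This is the piece of conic geometry your outline would have to supply before the constant $C = 8$ (hence the threshold $128$) can be extracted.
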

\noindent See Appendix~\ref{AppLemCLT}  for the proof of this claim.

For future reference, we note that it is relatively straightforward to
show that the random variable $\ltwo{\ProjK g}$ is distributed as a
mixture of $\chi$-distributions, and indeed, the Lemma~\ref{LemCLT} can be proved
via this route.  Raubertas et al.~\cite{raubertas1986hypothesis}
proved that the squared quantity $\ltwo{\ProjK g}^2$ is a mixture of
$\chi^2$ distributions, and a very similar argument yields the
analogous statement for $\ltwo{\ProjK g}$.

We are now ready to calculate the testing error for the GLRT given in
equation~\eqref{EqnGLRT}. Our goal is to lower bound the error
$\UNIERR(\psiglrt; \{0\}, \Kcone, \epsilon)$ uniformly over the chosen
threshold $\beta \in [0,\infty)$.  We divide the choice of $\beta$ into
three cases, depending on the relationship between $\beta$ and $\Exs \ltwo{\ProjK g}$, $\Exs \ltwo{\ProjK(\theta + g)}$.  Notice
this particular $\theta$ is chosen to be the one that satisfies inequality~\eqref{EqnMusgraves}.

\paragraph{Case 1}  First, consider a threshold
$\beta \in [0, \; \Exs \ltwo{\ProjK g}]$.  It then follows
immediately from inequality~\eqref{EqnHammerCor} that the type I error by its own satisfies 
\begin{align*}
  \text{type I error} = \mprob_{0}(\ltwo{\ProjK \yvec} \geq \beta) \geq 
  \Prob(\ltwo{\ProjK g} \geq \Exs \ltwo{\ProjK g}) \geq \frac{7}{16}.
\end{align*}

\paragraph{Case 2}  Otherwise, consider a threshold $\beta \in 
\big( \Exs \ltwo{\ProjK g}, \; \Exs \ltwo{\ProjK (\theta + g)} \big]$.
In this case, we again use inequality~\eqref{EqnHammerCor} to bound
the type I error, namely
\begin{align*}
  \text{type I error} & = \mprob_{0}( \ltwo{\ProjK \yvec} \geq \beta)
  \\
& = \Prob \Big[ \ltwo{\ProjK g } \geq \Exs \ltwo{\ProjK g} \Big] -
  \Prob \Big[ \ltwo{\ProjK g} \in [\Exs \ltwo{\ProjK g}, \beta) \Big]
    \\
& \geq \frac{7}{16} - \max_x \{ f_{\ltwo{\ProjK g}}(x)(\beta - \Exs \ltwo{\ProjK g})
\},
\end{align*}
where we use $f_{\ltwo{\ProjK g}}$ to denote the density function of
the random variable $\ltwo{\ProjK g}$ As discussed earlier, the random
variable $\ltwo{\ProjK g}$ is distributed as a mixture of
$\chi$-distributions; in particular, see Lemma~\ref{LemCLT} above and
the surrounding discussion for details.  As can be verified by direct
numerical calculation, any $\chi_k$ variable has a density that
bounded from above by $4/5$.  Using this fact, we have
\begin{align*}
\text{type I error} &\geq \frac{7}{16} - \frac{4}{5}(\beta - \Exs
\ltwo{\ProjK g}) \, \stackrel{(i)}{\geq}\, \frac{7}{16} -
\frac{4}{5}\Gamma(\theta) \, \stackrel{(ii)}{>} 3/8,
\end{align*}
where step (i) follows by the assumption that $\beta$ belongs to the
interval \mbox{$\big( \Exs \ltwo{\ProjK g}, \; \Exs \ltwo{\ProjK
    (\theta + g)} \big]$,} and step (ii) follows since $\Gamma(\theta)
  \leq 1/16$.

\paragraph{Case 3} Otherwise,  given a threshold
$\beta \in \big( \Exs \ltwo{\ProjK (g+\theta)}, \infty \big)$, we
define the scalar $x \defn \beta - \Exs \ltwo{\ProjK (g+\theta)}$.
From the concentration inequality given in
Lemma~\ref{LemConcentration}, we can deduce that
\begin{align*}
\text{type II error} &\geq \mprob_{\theta}(\ltwo{\ProjK \yvec} \leq
\beta) \\
& = 1 - \Prob \Big( \ltwo{\ProjK (\theta+ g)} - \Exs
\ltwo{\ProjK(\theta+ g)} > \beta - \Exs \ltwo{\ProjK(\theta + g)}
\Big)\\
& \geq 1 - \exp(-x^2/2).
\end{align*}
At the same time, 
\begin{align*}
\text{type I error} = \mprob_{0}( \ltwo{\ProjK \yvec} \geq \beta) \; & = \;
\mprob (\ltwo{\ProjK g} \geq \Exs \ltwo{\ProjK g}) - \mprob
(\ltwo{\ProjK g} \in [\Exs \ltwo{\ProjK g}, \beta)) \\
  & \geq \frac{7}{16} - \frac{4}{5}(\beta - \Exs\ltwo{\ProjK g}),
\end{align*}
where we again use inequality~\eqref{EqnHammerCor} and the boundedness
of the density of $\ltwo{\ProjK g}$.  Recalling that we have defined
\mbox{$x \defn \beta - \Exs \ltwo{\ProjK (g+\theta)}$} as well as
\mbox{$\Gamma(\theta) = \Exs \big( \ltwo{\ProjK(\theta +g)} - \ltwo{
    \ProjK g} \big)$,} we have
\begin{align*}
  \beta - \Exs\ltwo{\ProjK g} = x + \Gamma(\theta) \; \leq \; x + \frac{1}{16},
\end{align*}
where the last step uses the fact that $\Gamma(\theta) \leq 1/16$.
Consequently, the type I error is lower bounded as
\begin{align*}
\text{type I error} & \geq \frac{7}{16} - \frac{4}{5}(x + 1/16) \; =
\; \frac{31}{80} - \frac{4}{5} x.
\end{align*}
Combining the two types of error, we find that the testing error is
lower bounded as
\begin{align*}
\inf_{x > 0} \Big\{(\frac{31}{80} - \frac{4}{5} x)^+ + 1 - \exp( -
x^2/2) \Big\} = 1 - \exp(-\frac{31^2}{2\times64^2}) \geq 0.11.
\end{align*}

Putting pieces together, the GLRT cannot succeed with error smaller than $0.11$ no matter how the cut-off $\beta$ is chosen.

%%%%%%%%%%%%%%%%%%%%%%%%%%%%%%%%%%%%%%%%%%%%%%%%%%%%%%%%%%%%%%%%%%%%%%%%%%%%%

\subsection{Proof of inequality~\eqref{EqnMusgraves}}
\label{AppLemMusgraves}

Now let us turn to the proof of inequality~\eqref{EqnMusgraves}.
First notice that if the radius satisfies $\epsilon^2 \leq \lowconst
\NEWEPSCRITSQ$, then there exists some $\theta \in \Hyp_1$ with
$\ltwo{\theta} = \epsilon$ that satisfies
\begin{align}
\label{EqnKacey}
\ltwo{\theta}^2 \leq \lowconst \Exs \ltwo{\ProjK g} \text{ and }
\inprod{\theta}{\Exs \ProjK g} \leq \sqrt{\lowconst} \Exs \ltwo{\ProjK
  g}.
\end{align}
Setting $a = 4 /\sqrt{\lowconst}\geq 1$ in inequality~\eqref{EqnLatte}
yields
\begin{align*}
\Gamma(\theta) \leq \frac{8 \ltwo{\theta}^2/\sqrt{\lowconst} + 4
  \inprod{\theta}{\Exs \ProjK g}}{\Exs \ltwo{\ProjK g}} +
b\ltwo{\theta}
\end{align*}
where $b \defn 3\exp( - \frac{(\Exs\ltwo{\ProjK g})^2}{8}) + 24 \exp(
- \frac{\ltwo{\theta}^2}{\lowconst})$.  Now we only need to bound the
two terms in the upper bound separately.  First, note that
inequality~\eqref{EqnKacey} yields
\begin{align}
\frac{8 \ltwo{\theta}^2/\sqrt{\lowconst} + 4 \inprod{\theta}{\Exs
    \ProjK g}}{\Exs \ltwo{\ProjK g}} \leq 12 \sqrt{\lowconst}.
\end{align}
On the other hand, again by applying inequality~\eqref{EqnKacey}, it is
straightforward to verify the following two facts that
\begin{align*} 
\ltwo{\theta} \exp(-\frac{(\Exs \ltwo{\ProjK g})^2}{8}) &\leq
\sqrt{\lowconst \Exs \ltwo{\ProjK g}}\exp(-\frac{(\Exs \ltwo{\ProjK
    g})^2}{8}) \\
    &\leq \sqrt{\lowconst}\max_{x \in (0,\infty)} \sqrt{x}
\exp(-\frac{x^2}{8}) =\sqrt{\lowconst}
\left(\frac{2}{e}\right)^{1/4},\\
\text{ and }~~~ \ltwo{\theta} \exp(- \frac{\ltwo{\theta}^2}{\lowconst}) &\leq \sup_{x
  \in (0,\infty)} x \exp(-\frac{x^2}{\lowconst}) =
\sqrt{\frac{\lowconst}{2e}}.
\end{align*}
Combining the above two inequalities ensures an upper bound for
product $b \ltwo{\theta}$ and directly leads to upper bound of
quantity $\Gamma(\theta)$, namely
\begin{align*}
\Gamma(\theta) \leq 12\sqrt{\lowconst} + 3\sqrt{\lowconst}
\left(\frac{2}{e}\right)^{1/4} + 24\sqrt{\frac{\lowconst}{2e}},
\end{align*}
With the choice of $\lowconst$, we established inequality~\eqref{EqnMusgraves}.

%%%%%%%%%%%%%%%%%%%%%%%%%%%%%%%%%%%%%%%%%%%%%%%%%%%%%%%%%%%%%%%%%%%%%%%%%%%%%

\subsection{Proof of Lemma~\ref{LemCLT}} 
\label{AppLemCLT}

In order to prove this result, we first define random variable $F
\defn \ltwo{\ProjK g}^2 - m$, where $m \defn \Exs \ltwo{\ProjK g}^2$ and
$\tilde{\sigma}^2 \defn \var(F)$. We make use of the Theorem 2.1 in
Goldstein et al.~\cite{goldstein2014gaussian} which shows that the
distribution of $F$ and Gaussian distribution $Z \sim \NORMAL(0,
\tilde{\sigma}^2)$ are very close, more specifically, the Theorem says
\begin{align}  
\label{EqnHammer}
  \tvnorm{F-Z} \, \leq \, \frac{16}{\tilde{\sigma}^2} \sqrt{m}
        \, \leq \, \frac{8}{\Exs \ltwo{\ProjK g}}.
\end{align}
In the last inequality, we use the facts that $\tilde{\sigma}^2 \geq
2m$ and $\sqrt{\Exs \ltwo{\ProjK g}^2} \geq \Exs \ltwo{\ProjK g}$.

It is known that
the quantity $\ltwo{\ProjK g}^2$ is distributed as a mixture of
$\chi^2$ distributions(see e.g., \cite{raubertas1986hypothesis,goldstein2014gaussian})---in particular, we can write
\begin{align*}
\ltwo{\ProjK g}^2 \stackrel{\text{ law} }{=} \sum_{i=1}^{V_\Kcone} X_i
= W_\Kcone + V_\Kcone, \qquad{ W_\Kcone = \sum_{i=1}^{V_\Kcone}(X_i -
  1)},
\end{align*}
where each $\{X_i\}_{i\geq 1}$ is an i.i.d. sequence $\chi_1^2$
variables, independent of $V_\Kcone$. Applying the decomposition of
variance yields
\begin{align*}
\tilde{\sigma}^2 = \var(V_\Kcone) + 2 \Exs \ltwo{\ProjK g}^2 \geq 2m.
\end{align*}

We can write the probability $\Prob(\ltwo{\ProjK g}  > \Exs \ltwo{\ProjK g})$ as
\begin{align*} 
\Prob( \ltwo{\ProjK g} > \Exs \ltwo{\ProjK g}) = \Prob( \ltwo{\ProjK
  g}^2 - \Exs \ltwo{\ProjK g}^2 >(\Exs \ltwo{\ProjK g})^2 - \Exs
\ltwo{\ProjKcone g}^2) \geq \Prob(F > 0).
\end{align*}
So if $\Exs \ltwo{\ProjK g} \geq 128$, then
inequality~\eqref{EqnHammer} ensures that $d_{TV}(F,N) \leq 1/16$, and
hence
\begin{align*}
\mprob(F > 0) & \geq \Prob(Z > 0) - \tvnorm{F-Z} \geq \frac{7}{16}.
\end{align*}
We finish the proof of Lemma~\ref{LemCLT}.

%%%%%%%%%%%%%%%%%%%%%%%%%%%%%%%%%%%%%%%%%%%%%%%%%%%%%%%%%%%%%%%%%%%%%%%%%%%%%%%%%%%%%%%%%%%%

\section{Auxiliary proofs for Theorem 2}
\label{AppThmLBGen}

In this appendix, we collect the proofs of various lemmas used in the proof
of Theorem~\ref{ThmLBGen}.

%%%%%%%%%%%%%%%%%%%%%%%%%%%%%%%%%%%%%%%%%%%%%%%%%%%%%%%%%%%%%%%%%%%%%%%%%%%%%%%

\subsection{Proof of Lemma~\ref{LemChisquareBound}}
\label{AppLemChisquareBound}

For every probability measure $\qprob$ supported on $\Kcone \cap \NewBall^c(1)$, let
vector $\theta$ be distributed accordingly to measure $\epsilon
\qprob$ then it is supported on $\Kcone\cap \NewBall^c(\epsilon)$.
Consider a mixture of distributions,
\begin{align}
  \Prob_1(y) = \Exs_{\theta}~ (2\pi)^{-\usedim/2} \exp(-\frac{
          \ltwo{y-\theta}^2}{2}).
\end{align}
Let us first control the $\chi^2$ distance between distributions
$\Prob_1$ and $\Prob_0 \defn \NORMAL(0,\EYE{\usedim})$. Direct calculations yield
\begin{align*}
\chi^2(\Prob_1,\Prob_0) + 1
= \Exs_{\Prob_0} \left(\frac{\Prob_1}{\Prob_0}\right)^2
&= \Exs_{\Prob_0} \left(\Exs_{\theta} \exp\{ -\frac{
  \ltwo{y-\theta}^2}{2} + \frac{ \ltwo{y}^2}{2} \}
\right)^2 \\
&= \Exs_{\Prob_0} \left(\Exs_{\theta}\exp\{ \inprod{y}{\theta} - \frac{
  \ltwo{\theta}^2}{2}\} \right)^2.
\end{align*}
Suppose random vector $\theta'$ is an independent copy of random
vector $\theta$, then
\begin{align}
\label{EqnChiSquare}
\chi^2(\Prob_1,\Prob_0) + 1 & = \Exs_{\Prob_0} \Exs_{\theta, \theta'}
\exp\{\inprod{y}{\theta+\theta'}-\frac{
  \ltwo{\theta}^2+ \ltwo{\theta'}^2}{2}\} \notag \\
& = \Exs_{\theta, \theta'}\exp\{
\frac{\ltwo{\theta+\theta'}^2}{2} - \frac{\ltwo{\theta}^2+
  \ltwo{\theta'}^2}{2}\} \notag \\
& = \Exs_{\theta, \theta'}\exp(\inprod{\theta}{\theta'}) \notag \\
& = \Exs \exp(\epsilon^2\inprod{\eta}{\eta'}),
\end{align}
where the second step uses the fact the moment generating function of
multivariate normal distribution.  As we know, the testing error is
always bounded below by $1 - \tvnorm{\mprob_1, \mprob_0}$, so by the
relation between the $\chi^2$ distance and TV distance, we have:
\begin{align*}
\text{testing error} \, \geq \, 1 - \frac{1}{2}\sqrt{\Exs \exp \left(
\epsilon^2 \inprod{\eta}{\eta'} \right)-1},
\end{align*}
which completes our proof.

%%%%%%%%%%%%%%%%%%%%%%%%%%%%%%%%%%%%%%%%%%%%%%%%%%%%%%%%%%%%%%%%%%%%%%%%%%%%%%%%%%%%%%%%%%%%

\subsection{Proof of Lemma~\ref{LemInfoKey}}
\label{AppLemInfoKey}

Let us first provide a formal statement of 
Lemma~\ref{LemInfoKey} and then prove it.

\begin{lem}

\label{LemInfoKey}
Letting $\eta$ and $\eta'$ denote an i.i.d pair of random variables
drawn from the distribution $\qprob$ defined in
equation~\eqref{EqnPrior}, we have
\begin{align}
\label{EqnUBspicy}
  \Exs_{\eta,\eta'} \exp(\epsilon^2 \inprod{\eta}{\eta'}) & \leq
  \frac{1}{a^2} \exp\left(\frac{5\epsilon^2\ltwo{\Exs \ProjK g}^2
  }{(\Exs \ltwo{\ProjK g})^2} + \frac{40 \epsilon^4 \Exs(\ltwo{\ProjK
      g}^2)}{(\Exs\ltwo{\ProjK g})^4} \right),
\end{align}
where \mbox{$a \defn \Prob(\ltwo{\ProjK g} \geq \frac{1}{2} \Exs
  \ltwo{\ProjK g})$} and $\epsilon > 0$ satisfies the inequality
$\epsilon^2 \leq (\Exs \|\Pi_K g\|_2)^2/32$. 
\end{lem}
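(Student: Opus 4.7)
\textbf{Proof proposal for Lemma~\ref{LemInfoKey}.} Write $\mu \defn \Exs \ltwo{\ProjK g}$ and $v \defn \Exs \ProjK g$. By the definition of $\qprob$ in \eqref{EqnPrior}, a sample $\eta$ is distributionally equal to $2\ProjK g/\mu$ conditioned on the event $\Aevent \defn \{\ltwo{\ProjK g} \geq \mu/2\}$, whose probability is $a$. Taking independent copies $(g,g')$ of a standard Gaussian vector and dropping the indicator $\Ind(\Aevent \cap \Aevent')$ from the numerator (it is $\leq 1$), the plan is to reduce the claim to
\begin{align*}
\Exs_{\eta,\eta'} \exp(\epsilon^2 \inprod{\eta}{\eta'})
\;\leq\; \frac{1}{a^2}\,\Exs_{g,g'} \exp\bigl(\lambda \inprod{\ProjK g}{\ProjK g'}\bigr),
\qquad \lambda \defn \frac{4\epsilon^2}{\mu^2},
\end{align*}
where the hypothesis $\epsilon^2 \leq \mu^2/32$ translates into the convenient bound $\lambda \leq 1/8$. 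After this reduction the three terms appearing in the right-hand side of \eqref{EqnUBspicy} match, respectively, the factor $1/a^2$, a first-order term in $\|v\|_2^2/\mu^2$, and a second-order term in $\Exs\ltwo{\ProjK g}^2/\mu^4$; so I should aim to produce these two exponential factors in turn.

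The next step is to integrate out $g$ first, with $g'$ frozen. For any fixed $u \in \real^\usedim$, the map $g \mapsto \inprod{u}{\ProjK g}$ is $\ltwo{u}$-Lipschitz by the non-expansiveness of conic projection (as used in Lemma~\ref{LemConcentration}), and its mean equals $\inprod{u}{v}$. Tsirelson--Ibragimov--Sudakov Gaussian Lipschitz concentration yields the moment generating function bound
\begin{align*}
\Exs_g \exp\bigl(\lambda \inprod{u}{\ProjK g}\bigr) \;\leq\; \exp\Bigl(\lambda\inprod{u}{v} + \tfrac{\lambda^2}{2}\ltwo{u}^2\Bigr).
\end{align*}
Setting $u = \ProjK g'$ and taking expectation over $g'$ reduces the problem to the single-vector Gaussian integral
\begin{align*}
\Exs_{g'} \exp\Bigl(\lambda\inprod{v}{\ProjK g'} + \tfrac{\lambda^2}{2}\ltwo{\ProjK g'}^2\Bigr),
\end{align*}
which I would split using Cauchy--Schwarz into the two factors $\sqrt{\Exs\exp(2\lambda\inprod{v}{\ProjK g'})}$ and $\sqrt{\Exs\exp(\lambda^2 \ltwo{\ProjK g'}^2)}$.

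The first factor is handled by another application of Gaussian Lipschitz concentration to $\inprod{v}{\ProjK g'}$ (which is $\ltwo{v}$-Lipschitz with mean $\ltwo{v}^2$), yielding $\exp(\lambda\ltwo{v}^2 + \lambda^2\ltwo{v}^2)$. The main obstacle is the second factor: $\ltwo{\ProjK g'}$ is only 1-Lipschitz, so its square is sub-exponential rather than sub-Gaussian. Writing $Y \defn \ltwo{\ProjK g'}$, I would decompose $Y^2 = (Y-\mu)^2 + 2\mu(Y-\mu) + \mu^2$, apply Cauchy--Schwarz once more, and combine (i) the standard bound $\Exs\exp(s(Y-\mu)^2) \leq (1-2s)^{-1/2}$ valid for $s<1/2$ since $Y-\mu$ is 1-sub-Gaussian by \eqref{EqnLipschitz1}, (ii) the moment generating function bound $\Exs\exp(4\lambda^2\mu(Y-\mu))\leq \exp(8\lambda^4\mu^2)$, and (iii) the constraint $\lambda \leq 1/8$ together with $\mu^2 \leq \Exs Y^2$ and the variance bound \eqref{EqnVarianceBound}.

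Collecting the two factors and using $\ltwo{v}^2 \leq \mu^2 \leq \Exs Y^2$ (Jensen), the total exponent in the bound on $\Exs\exp(\lambda\inprod{\ProjK g}{\ProjK g'})$ is of the form $\lambda\ltwo{v}^2 + C_1\lambda^2 \ltwo{v}^2 + C_2\lambda^2\mu^2$; translating back from $\lambda = 4\epsilon^2/\mu^2$, the $\lambda\ltwo{v}^2$ term becomes the $5\epsilon^2\ltwo{v}^2/\mu^2$ contribution (with slack, since one may replace $1$ by $5/4$), while the $\lambda^2$ terms combine into the $40\epsilon^4\Exs\ltwo{\ProjK g}^2/\mu^4$ contribution. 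The hard part will be book-keeping the numerical constants so they fit the stated $5$ and $40$; I expect to need the bound $-\log(1-4\lambda^2) \leq 8\lambda^2$ on $\lambda^2 \leq 1/8$ and to absorb residual $\lambda^2$ terms into $(5\lambda^2/2)\Exs Y^2$ using $\Exs Y^2 \geq \mu^2$.
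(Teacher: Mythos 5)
Your skeleton is the same as the paper's up through the Cauchy--Schwarz split: drop the indicator to get the $1/a^2$ factor, apply Borell's Gaussian MGF bound to $h\mapsto\inprod{\ProjK g}{\ProjK h}$ (Lipschitz constant $\ltwo{\ProjK g}$), then split by Cauchy--Schwarz into $T_2\defn\sqrt{\Exs\exp(2\lambda\inprod{v}{\ProjK g'})}$ and $T_3\defn\sqrt{\Exs\exp(\lambda^2\ltwo{\ProjK g'}^2)}$, and bound $T_2$ by another Borell step. Where you genuinely diverge is $T_3$: the paper cites Sublemma~E.3 of Amelunxen et al.\ for the MGF bound $\Exs\exp(\lambda^2 Y^2)\leq\exp\bigl(\lambda^2\Exs Y^2 + 2\lambda^4\Exs Y^2/(1-4\lambda^2)\bigr)$, whose crucial feature is that \emph{everything is proportional to $\Exs Y^2$}; you instead give a from-scratch argument via $Y^2=(Y-\mu)^2+2\mu(Y-\mu)+\mu^2$, Cauchy--Schwarz, and the correct sub-Gaussian-to-$\chi^2$ comparison $\Exs e^{s(Y-\mu)^2}\leq(1-2s)^{-1/2}$. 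That is a legitimate and more elementary route that avoids the external citation, and I want to flag that your step (i) is right (it follows by coupling with an auxiliary Gaussian $Z$: $\Exs_X e^{sX^2}=\Exs_X\Exs_Z e^{\sqrt{2s}XZ}\leq\Exs_Z e^{sZ^2}$, using Borell's MGF bound for $X=Y-\mu$).

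The gap is in your final book-keeping, and it is not purely cosmetic. Your decomposition of $Y^2$ produces, from the $(Y-\mu)^2$ piece, an \emph{additive} term in the exponent of order $\lambda^2$ that carries no factor of $\mu^2$, $\ltwo{v}^2$, or $\Exs Y^2$. The paper's target is $\frac{5\epsilon^2\ltwo{v}^2}{\mu^2}+\frac{40\epsilon^4\Exs Y^2}{\mu^4}$, i.e.\ $1.25\lambda\ltwo{v}^2+2.5\lambda^2\Exs Y^2$, so you must convert that bare $\lambda^2$ into $c\lambda^2\Exs Y^2$, which forces a universal lower bound on $\Exs\ltwo{\ProjK g}^2$. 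The bound you cite, $\mu^2\leq\Exs Y^2$ combined with \eqref{EqnConstLower}, only gives $\Exs Y^2\geq 1/(2\pi)\approx 0.16$; if you pair that with your estimate $-\log(1-4\lambda^2)\leq 8\lambda^2$ (giving a residual $\approx\lambda^2$ in the $T_3$ exponent after taking the square root), the total coefficient on $\lambda^2\Exs Y^2$ comes out near $0.53+2\pi\approx 6.8$, well above $2.5$, i.e.\ you would prove the lemma only with a considerably worse constant than $40$. To close the gap along your route you need both (a) the sharper estimate $-\tfrac14\log(1-4\lambda^2)\leq\lambda^2/(1-4\lambda^2)\leq\tfrac{16}{15}\lambda^2$ on $\lambda\leq 1/8$, and (b) the observation that $\Exs\ltwo{\ProjK g}^2\geq 1/2$ for every non-trivial cone (project onto a ray inside $K$, giving the second moment of $(g_1)^+$, exactly as \eqref{EqnConstLower} does for the first moment). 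Neither is hard, but (b) in particular is a fact you never actually invoke and that the paper does not record either, so you should make it explicit; with (a) and (b) the coefficient on $\lambda^2\Exs Y^2$ becomes roughly $0.5 + 1/32 + 16/15 < 2.5$, and the stated constants $5$ and $40$ fall out.
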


% \noindent 

To prove this result, we use Borell's lemma~\cite{borell1975brunn} which states
that for a standard Gaussian vector $Z \sim \NORMAL(0, \EYE{\usedim})$
and a function $f: \mathbb{R}^d \rightarrow \mathbb{R}$ which is $L$-Lipschitz, we
have  
\begin{align}
\label{EqnBorell}
\Exs \exp(a f(Z)) & \leq \exp(a \Exs f(Z) + a^2 L^2/2) 
\end{align}
for every $a \geq 0$. 

Let $g, g'$ be i.i.d standard normal vectors in $\mathbb{R}^d$. Let
\begin{equation*}
  \Aevent(g) \defn \{\ltwo{\ProjK g} > 
\frac{1}{2} \Exs \ltwo{\ProjK g}\} \text{ and } \Aevent(g') \defn \{\ltwo{\ProjK g'} > 
\frac{1}{2} \Exs \ltwo{\ProjK g'}\} 
\end{equation*}
By definition of the probability measure $\qprob$ in expression
\eqref{EqnPrior}, we have 
\begin{align*}
\Exs_{\eta,\eta'} \exp(\epsilon^2 \inprod{\eta}{\eta'})
&= \Exs_{g,g'} \Bigg[ \exp \left(\frac{4\epsilon^2 \inprod{\ProjK g}{\ProjK
    g'}}{\Exs \ltwo{\ProjK g} \Exs \ltwo{\ProjK g'}} 
    \right) \Bigm \vert  \Aevent(g)\cap \Aevent(g') \Bigg]\\
&= \frac{1}{\Prob(\Aevent(g)\cap \Aevent(g'))}
    \Exs_{g,g'} \exp \left( \frac{4\epsilon^2\inprod{\ProjK g}{\ProjK
    g'}}{\Exs \ltwo{\ProjK g} \Exs \ltwo{\ProjK g'}} \right) \Ind(\Aevent(g)\cap \Aevent(g')).
\end{align*}
Using the independence of $g,g'$ and non-negativity of the exponential
function, we have  
\begin{align}
\label{EqnAllegro}
  \Exs_{\eta,\eta'} \exp(\epsilon^2 \inprod{\eta}{\eta'})
  \leq 
  \frac{1}{\Prob(\Aevent(g))^2}
  \underbrace{\Exs_{g,g'} \exp \left( \frac{4\epsilon^2\inprod{\ProjK g}{\ProjK
    g'}}{\Exs \ltwo{\ProjK g} \Exs \ltwo{\ProjK g'}} \right)}_{\defn \TERMONE}.
\end{align}
To simplify the notation, we write $\lambda \defn
4\epsilon^2/(\Exs \ltwo{\ProjK g})^2$ so that  % then by assumption $\lambda^2 < 1/8.$ We thus have 
\begin{align}
\label{EqnAdagio}
  \TERMONE &= \Exs_{g,g'} \exp \left( \lambda \inprod{\ProjK g}{\ProjK g'} \right).
\end{align}
Now for every fixed value of $g$, the function $h \mapsto
\inprod{\ProjK g}{\ProjK h}$ is Lipschitz with Lipschitz constant
equal to $\ltwo{\ProjK g}$. This is because 
\begin{align*}
  |\inprod{\ProjK g}{\ProjK h} - \inprod{\ProjK g}{\ProjK h'}|
   \leq  \ltwo{\ProjK g} \ltwo{\ProjK h - \ProjK h'}\leq  \ltwo{\ProjK g}\ltwo{h-h'},
\end{align*}
where we used Cauchy-Schwartz inequality and the non-expansive
property of convex projection. As a consequence of
inequality~\eqref{EqnBorell} and Cauchy-Schwartz inequality, the term 
$\TERMONE$ can be upper bounded as  
\begin{align}
\label{EqnAdagioCont}
\notag  \TERMONE &\leq 
  \Exs_{g} \exp\left(\lambda\inprod{\ProjK g}{\Exs\ProjK g'} + \frac{\lambda^2 \ltwo{\ProjK g}^2}{2}\right) \\
  &\leq 
  \underbrace{
  \sqrt{\Exs_{g} \exp\left(2\lambda\inprod{\ProjK g}{\Exs\ProjK g'} \right)}}_{\defn \TERMTWO} \, 
  \underbrace{
  \sqrt{\Exs_{g}\exp \left(\lambda^2 \ltwo{\ProjK g}^2\right)}}_{\defn \TERMTHREE}.
\end{align}
We now control $T_2,T_3$ separately. For $T_2$, note again that $h
\mapsto \inprod{\ProjK h}{\Exs \ProjK g'}$ is a Lipschitz function
with Lipschitz constant equal to $\ltwo{\Exs \ProjK
  g'}$. Inequality~\eqref{EqnBorell} implies therefore that 
\begin{align}
\label{EqnMinuet}
  \TERMTWO \leq \sqrt{\exp\left(2\lambda\inprod{\Exs \ProjK
  g}{\Exs\ProjK g'} + 2\lambda^2\ltwo{\Exs\ProjK g'}^2\right)}. 
\end{align}
To control quantity $T_3$, we use a result from \cite[Sublemma
E.3]{amelunxen2014living} on the moment generating function of
$\|\ProjK g\|^2$ which gives  
\begin{align}
\label{Rondo}
  \TERMTHREE \leq 
  \sqrt{ \exp \left(\lambda^2 \Exs (\ltwo{\ProjK g}^2) + 
  \frac{2\lambda^4 \Exs (\ltwo{\ProjK g}^2)}{1 - 4\lambda^2} \right)},
  \qquad{\mbox{whenever } \lambda < 1/4}.
\end{align}
Because of the assumption that $\epsilon^2 \leq (\Exs \|\Pi_K
g\|_2)^2/32$, we have $\lambda \leq 1/8 < 1/4$. Therefore putting all
the pieces together as above, we obtain
\begin{align*}
  \Exs_{\eta,\eta'} \exp(\epsilon^2 \inprod{\eta}{\eta'})
  &\leq  
  \frac{1}{\Prob(\Aevent(g))^2} \exp\left( (\lambda + \lambda^2)
    \ltwo{\Exs \ProjK g}^2 + \frac{\lambda^2\Exs(\ltwo{\ProjK
    g}^2)}{2} + \frac{\lambda^4 \Exs (\ltwo{\ProjK g}^2)}{1 -
    4\lambda^2} \right)\\ 
  &\leq  
  \frac{1}{\Prob(\Aevent(g))^2} \exp\left( 1.25\lambda \ltwo{\Exs \ProjK g}^2 + 
  2.5 \lambda^2 \Exs(\ltwo{\ProjK g}^2) \right)\\
&=
  \frac{1}{\Prob(\Aevent(g))^2}
  \exp\left(\frac{5\epsilon^2\ltwo{\Exs \ProjK g}^2 }{(\Exs (\ltwo{\ProjK g}^2)} + 
  \frac{40 \epsilon^4 \Exs(\ltwo{\ProjK g}^2)}{(\Exs\ltwo{\ProjK
  g})^4}) \right). 
\end{align*}
This completes the proof of inequality~\eqref{EqnUBspicy}.

%%%%%%%%%%%%%%%%%%%%%%%%%%%%%%%%%%%%%%%%%%%%%%%%%%%%%%%%%%%%%%%%%%%%%%%%%%%%%%%%%%

\section{Auxiliary proofs for Proposition 2 and the monotone cone}
\label{Applemmas}

In this appendix, we collect various results related to the monotone
cone, and the proof of Proposition~\ref{PropMon}.

%%%%%%%%%%%%%%%%%%%%%%%%%%%%%%%%%%%%%%%%%%%%%%%%%%%%%%%%%%%%%%%%%%%%%%%%%%%

\subsection{Proof of Lemma~\ref{LemMonInnProCal}}
\label{AppLemMonInnProCal}

So as to simplify notation, we define $\xi = \ProjK g$, with $j^{th}$
coordinate denoted as $\xi_j$.  Moreover, for a given vector $g \in
\real^d$ and integers $1 \leq u < v \leq \usedim$, we define the $u$
to $v$ average as
\begin{align*}
  \bar{g}_{uv} \defn \frac{1}{v - u + 1} \sum_{j = u}^{v} g_j.
\end{align*}
To demonstrate an upper bound for the inner product $\inf
\limits_{\eta \in \Kcone \cap \Sphere{\usedim}}
\inprod{\eta}{\Exs\ProjK g}$, it turns out that it is enough to take
$\eta = \frac{1}{\sqrt{2}}(-1,1, 0, \ldots, 0) \in \Kcone \cap
\Sphere{\usedim}$ and uses the fact that
\begin{align}
\label{EqnLem1Main}
 \inf \limits_{\eta \in \Kcone \cap \Sphere{\usedim}}
 \inprod{\eta}{\Exs\ProjK g} \leq \frac{1}{\sqrt{2}}\Exs(\xi_2 -
 \xi_1).
\end{align}
So it is only left for us to analyze $\Exs(\xi_2 - \xi_1)$ which
actually has an explicit form based on the explicit representation of
projection to the monotone cone (see Robertson et
al.~\cite{robertson1988order}, Chapter 1) where
\begin{align}
\label{EqnProjToMon-expression}
\xi_i = \lambda_i - \bar{\lambda},
\qquad \lambda_i = \max_{u \leq j} \min_{v \geq j} \bar{g}_{uv}.
\end{align}
This is true because projecting to cone $\Kcone = \Mon \cap
\LinSpacePerp$ can be written into two steps $\ProjK g = \projLperp
(\Pi_{\Mon} g)$ and projecting to subspace $\LinSpacePerp$ only shifts
the vector to be mean zero.

We claim that the difference satisfies
\begin{align}
\label{EqnTmp1Lem1}
\xi_2 - \xi_1 \leq \max_{v \geq 2} |\bar{g}_{2 v}| + \max_{v \geq 1}
|\bar{g}_{1 v}|.
\end{align}
To see this, as a consequence of expression
\eqref{EqnProjToMon-expression}, we have
\begin{align*}
\xi_2 - \xi_1 &= \max \{\min_{v \geq 2} \bar{g}_{1v},~ \min_{v \geq 2}
\bar{g}_{2v}\} - \min_{v \geq 1} \bar{g}_{1v}.
\end{align*}
The right hand side above only takes value in set $\{\min_{v \geq 2}
\bar{g}_{1v} - g_1,~0, ~\min_{v \geq 2} \bar{g}_{2v} - \min_{v\geq
  1}\bar{g}_{1v}\}$ where the last two values agree with
bound~\eqref{EqnTmp1Lem1} obviously while the first value can be
written as
\begin{align*}
  \min_{v \geq 2} \bar{g}_{1v} - g_1 = \min_{v \geq 2}
  \left(\frac{1}{v}\sum_{i=2}^v g_{i} - (1-\frac{1}{v}) g_1\right) =
  \min_{v \geq 2} (1- \frac{1}{v}) (\bar{g}_{2v} - g_1) \leq
  |\bar{g}_{2 v}| + |g_1|,
\end{align*}
which also agrees with inequality~\eqref{EqnTmp1Lem1}.

Next let us prove that for every $j = 1, 2$, we have
\begin{align}
  \label{EqnTmp2Lem1}
  \Exs \max_{v\geq j}|\bar{g}_{j v}| < 20 \sqrt{2},
\end{align}
and combine this fact with expressions~\eqref{EqnTmp1Lem1}
and~\eqref{EqnLem1Main} gives us $\inf \limits_{\eta \in \Kcone \cap
  \Sphere{\usedim}} \inprod{\eta}{\Exs\ProjK g} \leq 40$ which
validates the conclusion in Lemma~\ref{LemMonInnProCal}.

It is only left for us to verify inequality~\eqref{EqnTmp2Lem1}.
First as we can partition the interval $[j,\usedim]$ into $k$ smaller
intervals where each smaller interval is of length $2^m$ except the
last one, then
\begin{align}
  \label{EqnTmp3Lem1}
  \Exs \max_{j \leq v \leq \usedim} |\bar{g}_{j v}| &= \Exs
  \max_{1\leq m \leq k} \, \max_{v \in I_m} |\bar{g}_{jv}| \leq
  \sum_{m=1}^k \Exs \max_{v \in I_k}|\bar{g}_{jv}|,
\end{align}
where $I_m = [2^m+j-2, 2^{m+1}+j-3], ~1\leq m < k$, the number of
intervals $k$ and length of $I_k$ are chosen to make those intervals
sum up to $\usedim.$

Given index $2^m+j-2 \leq v \leq 2^{m+1}+j-3$, random variables
$\bar{g}_{jv}$ are Gaussian distributed with mean zero and variance
$1/(v-j+1)$.  Suppose we have Gaussian random variable $X_v$ with mean
zero and variance $\sigma_m^2 = 1/(2^m-1)$ and the covariance
satisfies $\cov(X_v, X_{v'}) = \cov(\bar{g}_{jv}, \bar{g}_{jv'})$.  
Since $\sigma_m^2 \geq 1/(v-j+1),$ the variable $\max_{v \in I_m} |\bar{g}_{jv}|$ is
stochastically dominated by the maximum $\max_{2^m \leq v \leq
  2^{m+1}-1} |X_v|$, and therefore
\begin{align*}
\sum_{m=1}^k \Exs \max_{v \in I_m} |\bar{g}_{jv}| \leq \sum_{m=1}^k
\Exs \max_{2^m \leq v \leq 2^{m+1}-1} |X_v|.
\end{align*}
Applying the fact that for $t\geq 2$ number of Gaussian random
variable $\epsilon_i \sim \NORMAL(0,\sigma^2)$, we have $\Exs \max_{1\leq i \leq t}
|\epsilon_i| \leq 4 \sigma \sqrt{2\log t} $ which gives
\begin{align}  \label{EqnTmp4Lem1}
\sum_{m=1}^k \Exs \max_{v \in I_m} |\bar{g}_{jv}| \leq \sum_{m=1}^k
4\sigma_m \sqrt{2\log(2^m)} = 4\sqrt{2\log 2} \left(\sum_{m=1}^k
\sqrt{\frac{m}{2^m-1}}\right).
\end{align}
The last step is to control the sum $\sum_{m=1}^k
\sqrt{\frac{m}{2^m-1}}$. There are many ways to show that it is upper
bounded by some constant.  One crude way is use the fact that
$\frac{\sqrt{m}}{2^{m}-1} \leq 2^{m/4}$ whenever $m \geq 5,$ therefore
we have
\begin{align*}
\sum_{m=1}^k \sqrt{\frac{m}{2^m-1}} \,=\, \sum_{m=1}^4
\sqrt{\frac{m}{2^{m}-1}} + \sum_{m=5}^k \sqrt{\frac{m}{2^{m}-1}} & <
\sum_{m=1}^4 \sqrt{\frac{m}{2^{m}-1}} + \sum_{m=5}^k
\frac{1}{2^{m/4}}\\
& < \sum_{m=1}^4 \sqrt{\frac{m}{2^{m}-1}} + \frac{2^{-5/4}}{1 -
  2^{-1/4}} < 6,
\end{align*}
which validates inequality~\eqref{EqnTmp2Lem1} when combined with
inequalities \eqref{EqnTmp3Lem1} and \eqref{EqnTmp4Lem1}.  This
completes the proof of Lemma~\ref{LemMonInnProCal}.

%%%%%%%%%%%%%%%%%%%%%%%%%%%%%%%%%%%%%%%%%%%%%%%%%%%%%%%%%%%%%%%%%%%%%%%%%%%%%%%%%%

\subsection{Proof of Lemma~\ref{LemSupport}}
\label{sec:LemSupport}

The proof of Lemma~\ref{LemSupport} involves two parts.  First, we
define the matrices $\OldAmat, \OldLmat$. Then we prove that the
distribution of $\eta$ has the right support where we make use of
Lemma~\ref{LemBeta-to-Eta}.

As stated, matrix $\OldAmat$ is a lower triangular matrix
satisfying~\eqref{EqnOldAmat}. Let us now specify the matrix
$\OldLmat$.  Recall that we denote $\delta \defn r^{-2}$ and $r \defn
1/3.$ To define matrix $\OldLmat$, let us first define a partition of
$[\usedim]$ into $m$ consecutive intervals $\big \{I_{1}, \ldots,
I_{m} \big \}$ with $m$ specified in expression~\eqref{EqnM} and the
length of each interval $|I_i| = \ell_i$ where $\ell_i$ is defined as
\begin{align}\label{EqnPartition}
  \ell_i \defn \lfloor \frac{\delta-1}{\delta^i} (\usedim + 
   \log_\delta \usedim + 3) \rfloor,
  \qquad
  1\leq i\leq m-1,
\end{align}
and $\ell_{m} \defn \usedim - \sum_{i=1}^{m-1} \ell_i.$

Following directly from the definition~\eqref{EqnPartition}, each length $\ell_i \geq 1$ and $\ell_i$ is a decreasing sequence with regard to $i$.
Also $\ell_i$ satisfies the following
\begin{align}
 \label{EqnConseductive}
  \ell_1 = \lfloor \frac{\delta-1}{\delta} (\usedim + \log_\delta \usedim + 3) \rfloor < \usedim 
  \qquad \text{ and }~~ 
  \ell_i \geq \delta \ell_{i+1}, \text{ for } 1\leq i\leq m-1,
\end{align}
where the first inequality holds since as $\sqrt{\log (e\usedim)} \geq 14$, we have $(\delta-1)(\log_\delta \usedim + 3) \leq \usedim$ and the last inequality follows from the fact that 
$\lfloor ab \rfloor \geq a \lfloor b \rfloor$ for positive integer $a$ and 
$b\geq 0$ (because $a\lfloor b \rfloor$ is an integer that is smaller than $ab$).

We are now ready to define the $\usedim \times m$ matrix $\OldLmat$. We take 
\begin{align}
\label{EqnOldLmat}
  \OldLmat(i,j) = 
  \begin{cases}
    \frac{1}{\sqrt{\ell_j}} & i\in I_j,\\
    0 & \text{otherwise.}
  \end{cases}
\end{align}
It is easy to check that matrix $\OldLmat$ satisfies $\OldLmat^T \OldLmat = \Ind_m$ which validates inequality~\eqref{EqnFF-ind}.

% So it is only left for us to verify that under the choice of
% matrices $\OldLmat$ and $\OldAmat$, both $\eta$ and $\eta -
% \bar{\eta} \ONES$ are supported on $\Mon\cap\LinSpace^T\cap
% \NewBall^c(1)$ to complete the proof of Lemma~\ref{LemSupport}.  To
% show these facts, we will make use of Lemma~\ref{LemBeta-to-Eta}.

First we show that both $\eta = \OldLmat \OldAmat b$ and $\eta - \bar{\eta} \ONES$ belong to $\Mon.$
The $i$-th coordinate of $\eta$ can be written as 
\begin{align*}
\eta_{i} = \frac{1}{\sqrt{\ell_j}} \sum_{t=1}^{j} r^{j-t}b_t,
\qquad \forall~i \in I_{j}.
\end{align*}
Therefore we can denote $u_j$ as the value of $\eta_i$ for $i \in I_j$.
To establish monotonicity, we only need to compare the value in the consecutive blocks. 
Direct calculation of the consecutive ratio yields 
\begin{align*}
   \frac{u_{j+1}}{u_j}
   =
   \frac{r (\sum_{t=1}^{j} r^{j-t} b_t) + b_{j+1}}{\sqrt{\ell_{j+1}}}
   \frac{\sqrt{\ell_j}}{\sum_{t=1}^j r^{j-t}b_t}
   \geq
   r \sqrt{\frac{\ell_j}{\ell_{j+1}}} \geq 1,
\end{align*}
where we used the non-negativity of coordinates of vector $b$ and the last inequality follows from inequality~\eqref{EqnConseductive} and $\delta = r^{-2}.$
The monotonicity of $\eta - \bar{\eta}\ONES$ thus inherits directly from the monotonicity of $\eta.$

To complete the proof of Lemma~\ref{LemSupport}, we only need to prove
lower bounds on $\ltwo{\eta}$ and $\ltwo{\eta - \bar{\eta}}$. For
these, we shall use inequality~\eqref{EqnNorm} of
Lemma~\ref{LemBeta-to-Eta}.

\paragraph{Proof of the bound $\ltwo{\eta} \geq 1$:}
Recall that $r = 1/3$ and as a direct consequence of
inequality~\eqref{EqnNorm} in Lemma~\ref{LemBeta-to-Eta}, we have
\begin{align} 
\label{EqnNormEta}
\inprod{\eta}{\eta} = \ltwo{\OldAmat b}^2 ~\geq~ \frac{9}{4} -
\frac{63}{32 s} > 1.96,
 \end{align}
where the last step follows form the fact that $s = \lfloor \sqrt{m}
\rfloor \geq 7$.  Therefore, the norm condition holds so $\eta$ is
supported on $\Mon\cap\LinSpace^T\cap\NewBall^c(1)$.

\paragraph*{Proof of the bound $\ltwo{\eta - \bar{\eta}\ONES} \geq 1$:} 
The norm $\ltwo{\eta - \bar{\eta}\ONES}^2$ has the following
decomposition where
\begin{align*}
  \ltwo{\eta - \bar{\eta}\ONES}^2 = \ltwo{\eta}^2 -
  \usedim(\bar{\eta})^2.
\end{align*}
We claim that $\usedim(\bar{\eta})^2 \leq 0.2$.  If we take this for
now, combining with inequality~\eqref{EqnNormEta} which says
$\ltwo{\eta}^2$ is greater than $1.96$, we can deduce that $\ltwo{\eta
  - \bar{\eta}\ONES}^2 \geq 1.$ So it suffices to verify the claim
$\usedim(\bar{\eta})^2 \leq 0.2$.  Recall that $\eta = \OldLmat
\OldAmat b$.  Direct calculation yields
\begin{align*}
  \usedim \bar{\eta} = \inprod{\ONES}{\eta} = \ONES^T \cdot \OldLmat
  \OldAmat b = \sum_{k=1}^m b_k \underbrace{ \sum_{i=k}^m
    \sqrt{\ell_i}r^{i-k}}_{\defn a_k}.
\end{align*}
Plugging into the definitions of $r$ and $\ell_i$ guarantees that
\begin{align*}
a_k \leq \sum_{i=k}^m \sqrt{ \frac{(\delta-1)(\usedim + \log_\delta
    \usedim + 3)}{\delta^i}} \frac{1}{\delta^{(i-k)/2}} &=
\sqrt{(\delta-1)(\usedim + \log_\delta \usedim + 3)\delta^{k}}
\sum_{i=k}^m \delta^{-i} \\
&\leq \sqrt{\frac{(\usedim + \log_\delta
    \usedim + 3)}{(\delta-1)\delta^{k-2}}},
\end{align*}
where the last step uses the summability of a geometric
sequence---namely $\sum_{i=k}^m \delta^{-i} \leq
\delta^{-k+1}/(\delta-1).$ Now for every vector $b$, our goal is to
control $\sum a_kb_k.$ Recall that every vector $b$ has $s$ non-zero
entries which equal to $1/\sqrt{s}$ where $s = \lfloor \sqrt{m}
\rfloor.$ Since $a_k$ decreases with $k$, this inner product $\sum
a_kb_k$ is largest when the first $s$ coordinates of $b$ are non-zero,
therefore
\begin{align*}
  \usedim \bar{\eta} \leq\sum_{k=1}^s a_k \frac{1}{\sqrt{s}} \leq
  \frac{1}{\sqrt{s}} \sqrt{\frac{\delta^2(\usedim + \log_\delta
      \usedim + 3)}{\delta-1}} \sum_{k=1}^s\frac{1}{\delta^{k/2}} \leq
  \frac{1}{\sqrt{s}} \sqrt{\frac{\delta^2(\usedim + \log_\delta
      \usedim + 3)}{\delta-1}} \frac{1}{\sqrt{\delta} - 1},
\end{align*}
and thus we have
\begin{align*}
  d (\bar{\eta})^2 \leq \frac{1}{\sqrt{m}-1} \frac{(\usedim +
    \log_\delta \usedim + 3)}{\usedim}
  \frac{\delta^2}{(\delta-1)(\sqrt{\delta} - 1)^2} \leq
  \frac{81(\usedim + \log_\delta \usedim + 3)}{32 \usedim(\sqrt{m} -
    1)}< 0.2,
\end{align*}
where the last step uses $\sqrt{m} \geq 8$. Therefore, the norm
condition also holds so $\eta - \bar{\eta} \ONES$ is supported on
$\Mon\cap\LinSpace^T\cap\NewBall^c(1)$.

Thus, we have completed the proof of Lemma~\ref{LemSupport}.

%%%%%%%%%%%%%%%%%%%%%%%%%%%%%%%%%%%%%%%%%%%%%%%%%%%%%%%%%%%%%%%%%%%%%%%%%%%%%%%

\subsection{Proof of Lemma~\ref{LemBeta-to-Eta}}
\label{AppLemBeta-to-Eta}

By definition of the matrix $\OldAmat$, we have
\begin{align*}
  \inprod{\OldAmat b}{\OldAmat b'} = \sum_{t=1}^m (\OldAmat b)_t
  (\OldAmat b')_t
  &= \sum_{t=1}^m (b_t + rb_{t-1} + \cdots + r^{t-1}b_1)(
  b'_t + rb'_{t-1} + \cdots + r^{t-1}b'_1) \\
  &= \sum_{t=1}^m \sum_{u=1}^t \sum_{v=1}^t r^{2t-u-v} b_u b'_v.
\end{align*}
Switching the order of summation yields
\begin{align}
 \label{EqnGb}
  \notag \inprod{\OldAmat b}{\OldAmat b'} &= \sum_{u=1}^m \sum_{v=1}^m
  b_u b'_v \sum_{t=\max\{u,v\}}^m r^{2t-u-v} \\
  \notag &= \sum_{u=1}^m \sum_{v=1}^m \frac{b_u b'_v}{r^{u+v}}
  \frac{r^{2\max\{u,v\}} - r^{2m+2}}{1-r^2} \\
  &= \underbrace{\frac{1}{1-r^2}\sum_{u=1}^m \sum_{v=1}^m b_u b'_v
    r^{|u-v|}}_{\defn \Delta_1 } -
  \underbrace{\frac{1}{1-r^2}\sum_{u=1}^m \sum_{v=1}^m b_u b'_v
    r^{2m+2-u-v}}_{\defn \Delta_2}.
\end{align}
We bound the two terms $\Delta_1$ and $\Delta_2$ separately.

Recall the fact that $b, b'$ belong to $\KposSet$, so there are exactly $s = \lfloor \sqrt{m} \rfloor$ non-zero entry in both $b$ and $b'$ and these entries equal to $1/\sqrt{s}$. The summation defining
$\Delta_1$ is not affected by the permutation of coordinates, so that
we can assume without loss of generality that the indices of non-zero
entries in $b$ are indexed by $\{1, \ldots,s \}$, and that the indices
of non-zero entries in $b'$ are indexed by $\{k, k+1,\ldots, k+s-1
\}$ for some $1\leq k \leq m+1-s.$

We split our proof into two cases depending on whether $k \leq s$ or $k> s.$

\paragraph{Case 1 ($k \leq s$):} The summation $\Delta_1$ can be written as
\begin{align*}
  s(1-r^2)\Delta_1 = s\sum_{u=1}^m \sum_{v=1}^m b_u b'_v r^{|u-v|}
  = \sum_{u=1}^s \sum_{v=k}^{k+s-1}  r^{|u-v|}.
\end{align*}
Direct calculation yields 
\begin{align*}
  s(1-r^2)\Delta_1 &= \sum_{u=1}^{k-1} \sum_{v=k}^{k+s-1}  r^{v-u} + \sum_{u=k}^s \sum_{v=k}^{u} r^{u-v} + \sum_{u=k}^s \sum_{v=u+1}^{k+s-1} r^{v-u} \\
  &= \frac{(1-r^s)(r-r^k)}{(1-r)^2} + \frac{s-k+1}{1-r}
  - \frac{r}{(1-r)^2}(1-r^{s-k+1}) + \frac{r(s-k+1)}{1-r} - 
  \frac{r^k-r^{s+1}}{(1-r)^2}\\
  &= \frac{1+r}{1-r}(s-k+1) + \frac{r^k(r^s+r^{s+2}-2)}{(1-r)^2}.
\end{align*}
Notice the following two facts that 
\begin{align*}
  \inprod{b}{b'} = \frac{s-k+1}{s}
  \qquad
  \text{ and }~
  \frac{-2r}{(1-r)^2} \leq 
  \frac{r^k(r^s+r^{s+2}-2)}{(1-r)^2} < 0,
\end{align*}
so that 
\begin{align}
\label{EqnGoodBound}
  \frac{1}{(1-r)^2}\inprod{b}{b'} + \frac{-2r}{s(1-r^2)(1-r)^2} \leq
  \Delta_1 \leq \frac{1}{(1-r)^2}\inprod{b}{b'}.
\end{align}

\paragraph{Case 2 ($k > s$):} The summation $\Delta_1$ satisfies
the bounds
\begin{align*}
  s(1-r^2)\Delta_1 &= s\sum_{u=1}^m \sum_{v=1}^m b_u b'_v r^{|u-v|}
  = \sum_{u=1}^s \sum_{v=k}^{k+s-1} r^{v-u}
  =  \frac{r^{k-s}(1-r^s)^2}{(1-r)^2}.
\end{align*}
Since $k-s \geq 1$, we have $\inprod{b}{b'} = 0$ and consequently
\begin{align}
  \label{EqnGoodBoundII}
\Delta_1 \leq \frac{1}{(1-r)^2}\inprod{b}{b'} +
\frac{r}{s(1-r^2)(1-r)^2}.
\end{align}

Combining inequalities \eqref{EqnGb}, \eqref{EqnGoodBound} and
\eqref{EqnGoodBoundII}, we can deduce that
\begin{align*}
\inprod{\OldAmat b}{\OldAmat b'} ~\leq~ \Delta_1 ~\leq~
\frac{1}{(1-r)^2}\inprod{b}{b'} + \frac{r}{s(1-r^2)(1-r)^2},
\end{align*}
which validates inequality~\eqref{EqnGInprod}.

On the other hand, when $b=b'$, the summation $\Delta_2$ is the
largest when the non-zero entries of $b$ lie on coordinates $m-s+1,\ldots,m$.
Thus we have
\begin{align}
\label{EqnGb-partII}
s (1-r^2) \Delta_2 \leq \sum_{u=m-s+1}^m \sum_{v=m-s+1}^m r^{2m+2-u-v}
= \frac{r^2(1-r^s)^2}{(1-r)^2} < \frac{r^2}{(1-r)^2}.
\end{align}
Combining decomposition~\eqref{EqnGb} with the
inequalities~\eqref{EqnGoodBound}, we can deduce that
\begin{align*}
\inprod{\OldAmat b}{\OldAmat b} \leq \frac{1}{(1-r)^2} - \frac{2r}{s
  (1-r^2)(1-r)^2} - \frac{r^2}{s(1-r^2)(1-r)^2},
\end{align*}
where we use the fact that $\inprod{b}{b} = 1$. This completes the
proof of inequality~\eqref{EqnNorm}.

\end{document}